\newtheorem{theorem}{Theorem}[section]
\newtheorem{lemma}[theorem]{Lemma}
\newtheorem{proposition}[theorem]{Proposition}
\newtheorem{corollary}[theorem]{Corollary}
\newtheorem{definition}{Definition}
\theoremstyle{definition}
\newtheorem{question}[theorem]{Question}
\newtheorem{example}{Example}
\newcommand{\Mod}{\mathrm{Mod}}
\newcommand{\Out}{\mathrm{Out}}
\newcommand{\C}{\mathcal{C}}
\newcommand{\T}{\mathcal{T}}
\newcommand{\V}{\mathcal{C}^0}
\newcommand{\supp}{\mathrm{supp}}
\newcommand{\grp}[1]{\langle{#1}\rangle}
\newcommand{\F}{\mathbb{F}}
\newcommand{\Z}{\mathbb{Z}}
\newcommand{\R}{\mathbb{R}}
\renewcommand{\i}{\infty}
\newcommand{\s}{\sigma}
\newcommand{\del}{\partial}
\newcommand{\tilda}[1]{\widetilde{#1}}
\newcommand{\subs}{\textsc{subs}}
\newcommand{\syl}{\mathrm{syl}}
\newcommand{\Min}{\mathrm{Min}}
\newcommand{\Fill}{\textsf{span}}
\newcommand{\lk}{\mathrm{lk}}
\begin{document}

\title[Convex cocompactness in $\Mod(S)$ via quasiconvexity in RAAGs]{Convex cocompactness in mapping class groups via\\quasiconvexity in right-angled Artin groups}
\author[J. Mangahas]{Johanna Mangahas}
\address{Mathematics Department,
Brown University,
151 Thayer Street,
Providence, RI 02912, U.S.A.}
\email{\href{mailto:mangahas@math.brown.edu}{mangahas@math.brown.edu}}
\author[S. Taylor]{Samuel J. Taylor}
\address{Department of Mathematics, 
University of Texas at Austin, 
1 University Station C1200, 
Austin, TX 78712, U.S.A.
}
\email{\href{mailto:staylor@math.utexas.edu}{staylor@math.utexas.edu}}
\thanks{\tiny The first author acknowledges support by NSF DMS-1204592 and the second author is partially supported by NSF RTG grants DMS-0636557 and DMS-1148490. }
\date{June 2013}
\begin{abstract}
We characterize convex cocompact subgroups of mapping class groups that arise as subgroups of specially embedded right-angled Artin groups. That is, if the right-angled Artin group $G < \Mod(S)$ satisfies certain conditions that imply $G$ is quasi-isometrically embedded in $\Mod(S)$, then a purely pseudo-Anosov $H<G$ is convex cocompact in $\Mod(S)$ if and only if it is combinatorially quasiconvex in $G$. We use this criterion to construct convex cocompact subgroups of $\Mod(S)$ whose orbit maps into the curve complex have small Lipschitz constants.

\end{abstract}
\maketitle


\section{Introduction}\label{intro}

\emph{Convex cocompact} subgroups of mapping class groups are those finitely generated subgroups whose orbits in Teichm\"{u}ller space are quasiconvex \cite{FMo}, or equivalently, whose orbit map into the curve complex defines a quasi-isometric embedding of the group \cite{H,KL2}. Such subgroups of mapping class groups are of interest because of their close connection to surface group extensions.  Letting $S$ denote the close surface of genus $g \ge2$ and $\mathring{S}$ denote $S$ punctured at $p \in S$, the well-known Birman exact sequence 
 $$1 \longrightarrow \pi_1(S,p) \longrightarrow \Mod(\mathring{S}) \overset{f}{\longrightarrow} \Mod(S )\to 1$$
gives rise to an extension $E_G$ of $\pi_1(S,p)$ for each subgroup $G < \Mod(S)$, obtained by setting $E_G = f^{-1}(G)$.  Theorems in \cite{FMo} and \cite{H} combine to say that $E_G$ is Gromov-hyperbolic if and only if $G$ is convex cocompact. Hence, to understand the prevalence and properties of hyperbolic surface group extensions we are left to study convex cocompact subgroups of $\Mod(S)$.

The results of this paper determine the conditions of convex cocompactness for mapping class subgroups contained in certain \emph{admissible} embedded right-angled Artin groups, including the groups constructed in \cite{CLM}.  We denote by $A(\Gamma)$ the right-angled Artin group associated to the graph $\Gamma$, and say a subgroup $H < A(\Gamma)$ is \emph{quasiconvex in} $A(\Gamma)$ if it is quasiconvex in the word metric using the generating set corresponding to vertices of $\Gamma$.

\begin{theorem}\label{intromain}Suppose $A(\Gamma) < \Mod(S)$ is admissible.  Then $H < A(\Gamma)$ is convex cocompact if and only if it is quasiconvex in $A(\Gamma)$ and all nontrivial elements of $H$ are pseudo-Anosov.
\end{theorem}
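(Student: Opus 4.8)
The plan is to translate ``convex cocompact'' into a curve-complex statement and then run a subsurface-projection argument on both sides. Recall from \cite{H,KL2} that $H<\Mod(S)$ is convex cocompact if and only if, for some (hence any) basepoint marking $\mu$, the orbit map $h\mapsto h\cdot\mu$ into $\C(S)$ is a quasi-isometric embedding; and from \cite{FMo,KL2} that a convex cocompact subgroup is automatically purely pseudo-Anosov. Since $A(\Gamma)$ is admissible it is quasi-isometrically embedded in $\Mod(S)$, so the composite $A(\Gamma)\into\Mod(S)\to\C(S)$ is coarsely Lipschitz and $\dis{\C(S)}{\mu,h\mu}\preceq\dis{A(\Gamma)}{1,h}$ for all $h\in A(\Gamma)$; when $H$ is quasiconvex in $A(\Gamma)$ the right-hand side is coarsely $\dis{H}{1,h}$. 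Thus in the ``if'' direction the only content is the reverse inequality $\dis{H}{1,h}\preceq\dis{\C(S)}{\mu,h\mu}$, and in the ``only if'' direction the only content is that convex cocompactness forces quasiconvexity in $A(\Gamma)$.

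For both directions I would use the distance formula for an admissible embedding: up to uniform multiplicative and additive error, $\dis{A(\Gamma)}{1,h}$ equals the sum, over the $A(\Gamma)$-orbit of the vertex supports $\{S_v:v\in V(\Gamma)\}$ together with $S$ itself, of the threshold-$K$ subsurface projection distances between $\mu$ and $h\mu$; the term for $S$ is exactly $\dis{\C(S)}{\mu,h\mu}$, and every proper subsurface occurring is a translate $gS_v$ whose stabilizer meets $A(\Gamma)$ in a conjugate of the star subgroup $\grp{v}\times A(\lk v)$. (This follows from admissibility together with the Masur--Minsky distance formula, and is the natural refinement of the quasi-isometric embedding hypothesis.) Now in the ``if'' direction, suppose $H$ is quasiconvex and purely pseudo-Anosov but the $\C(S)$-orbit is not a quasi-isometric embedding. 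Then there are $h_n\in H$ with $\dis{A(\Gamma)}{1,h_n}\to\i$ while $\dis{\C(S)}{\mu,h_n\mu}$ is sublinear in $\dis{A(\Gamma)}{1,h_n}$, so by the distance formula the proper terms dominate and there are proper subsurfaces $W_n=g_nS_{v_n}$ with $\dis{\C(W_n)}{\mu,h_n\mu}\to\i$. Geometrically this means an $A(\Gamma)$-geodesic from $1$ to $h_n$ has a long subsegment lying uniformly close to the coset $g_n(\grp{v_n}\times A(\lk v_n))$, along which the $\grp{v_n}$-coordinate grows without bound; quasiconvexity of $H$ places elements of $H$ uniformly close to this subsegment, so after passing to a subsequence and translating, a compactness argument in the $\mathrm{CAT}(0)$ cube complex of $A(\Gamma)$ (equivalently, in the extension graph $\Gamma^e$) yields a bi-infinite geodesic, within bounded distance of both $H$ and a single star-subgroup coset, together with a nontrivial $h\in H$ translating along it. A power of such an $h$ stabilizes a parallel family of hyperplanes dual to conjugates of one vertex, equivalently fixes a multicurve of the form $\del(gS_v)$; hence $h$ is reducible, contradicting that $H$ is purely pseudo-Anosov. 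So the $\C(S)$-orbit is a quasi-isometric embedding and $H$ is convex cocompact.

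For the ``only if'' direction, let $H$ be convex cocompact. Then $H$ is purely pseudo-Anosov, and convex cocompactness implies $H$ is undistorted in $\Mod(S)$ (\cite{FMo}), hence by admissibility undistorted in $A(\Gamma)$; combining with the reformulation above, $\dis{A(\Gamma)}{1,h}\asymp\dis{\C(S)}{\mu,h\mu}$ for all $h\in H$, so the distance formula forces the total proper-subsurface contribution to be at most a fixed multiple of $\dis{\C(S)}{\mu,h\mu}$ on all of $H$. To upgrade the quasi-isometric embedding $H\into A(\Gamma)$ to quasiconvexity, take an $A(\Gamma)$-geodesic $\gamma$ from $1$ to $h\in H$; its image in the hyperbolic complex $\C(S)$ is a uniform quasigeodesic, so it fellow-travels the geodesic $[\mu,h\mu]$, which lies uniformly close to the quasiconvex orbit $H\mu$; thus every vertex $g$ of $\gamma$ satisfies $\dis{\C(S)}{g\mu,h'\mu}=O(1)$ for some $h'\in H$. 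It remains to promote this to $\dis{A(\Gamma)}{g,h'}=O(1)$, and here one again invokes the distance formula: a large subsurface projection of $(h')^{-1}g$, propagated via the Behrstock inequalities along $\gamma$ and near the quasiconvex set $H\mu$, would again produce elements of $H$ marching into a proper coset and hence, as above, a reducible element of $H$, which is impossible. Therefore $\gamma$ stays uniformly close to $H$, so $H$ is quasiconvex in $A(\Gamma)$.

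The crux, and the step I expect to be hardest, is the mechanism used in both directions: converting \emph{unbounded proper subsurface projections of a combinatorially quasiconvex subgroup} into \emph{a reducible (hence non-pseudo-Anosov) element of that subgroup}. This demands a precise description of how a quasiconvex subgroup of $A(\Gamma)$ meets the cosets of its star subgroups, plus a compactness argument extracting an honest element of $H$ conjugate into a parabolic; making all of the estimates uniform along the sequence $h_n$, rather than merely for each fixed $n$, is the delicate point and is exactly where the structure of admissible right-angled Artin subgroups is essential.
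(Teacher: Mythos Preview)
Your proposal outlines a plausible-sounding strategy, but both directions contain genuine gaps, and both differ substantially from the paper's arguments.

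For the ``if'' direction, the paper does not argue by contradiction via a limiting or compactness extraction of a reducible element. Instead it proves a direct lower bound $d_{\C(S)}(\mu,\phi(h)\mu)\ge |h|_{A(\Gamma)}/6\ell-2$ (Theorem~\ref{main}). The mechanism is: quasiconvexity of $H$ yields a compact NPC cube complex $C$ with $\pi_1(C)=H$ locally isometrically mapping to $S_\Gamma$ (Lemma~\ref{Cexists}), so by pigeonhole every sufficiently long subword of a normal form for $h\in H$ contains a subword conjugate to a short element of $H$ (Proposition~\ref{quasicontrol}). Since $H$ is purely pseudo-Anosov those short elements fill, so normal forms contain \emph{filling blocks} with uniform frequency (Lemma~\ref{blockexistence}). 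Admissibility converts the syllable partial order into the subsurface order $\prec$ of Lemma~\ref{surfaceordering}, and the Bounded Geodesic Image Theorem together with Lemma~\ref{orderedtriple} forces any $\C(S)$-geodesic from $\mu$ to $\phi(h)\mu$ to contribute at least one distinct vertex per block. Your route, by contrast, hinges on the step ``a compactness argument \dots\ yields a bi-infinite geodesic \dots\ together with a nontrivial $h\in H$ translating along it,'' and this is precisely where all the content lies: from a sequence $h_n$ with large projections to \emph{varying} proper subsurfaces $W_n=g_nS_{v_n}$ it is not clear how to extract an honest element of $H$ (rather than a boundary limit) fixing a multicurve. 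You have identified the crux correctly, but not supplied it.

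For the ``only if'' direction there is a more concrete error. You assert that the image in $\C(S)$ of an $A(\Gamma)$-geodesic from $1$ to $h\in H$ is a uniform quasigeodesic. This is unjustified: the orbit map $A(\Gamma)\to\C(S)$ is only coarsely Lipschitz, not a quasi-isometric embedding (reducible elements of $A(\Gamma)$ have bounded $\C(S)$-orbits), so an $A(\Gamma)$-geodesic can linger in a star-subgroup coset and its $\C(S)$-image can stall or backtrack. Without this you cannot conclude that vertices of $\gamma$ lie close in $\C(S)$ to $H\mu$, and your attempt to promote $\C(S)$-closeness to $A(\Gamma)$-closeness then falls back on the same unproved compactness mechanism. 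The paper sidesteps all of this by working in Teichm\"uller space rather than $\C(S)$: admissibility gives a quasi-isometric orbit map $A(\Gamma)\to\mathcal{T}(S)$ (Theorem~\ref{CLMtheorem}), so $A(\Gamma)$-geodesics \emph{do} map to uniform quasigeodesics there; convex cocompactness makes the $H$-orbit quasiconvex and $\epsilon$-cobounded; and Minsky's contraction theorem for cobounded Teichm\"uller geodesics (Theorem~\ref{Minsky}) forces those quasigeodesics to fellow-travel the $H$-orbit. Pulling back through the quasi-isometric embedding gives quasiconvexity of $H$ in $A(\Gamma)$ directly (Theorem~\ref{necessity}), with no subsurface-projection bookkeeping and no need to manufacture a reducible element.
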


As observed in \cite{KLSurvey}, the existence of a purely pseudo-Anosov subgroup $H \le \Mod(S)$ that is \emph{not} convex cocompact would imply that the extension $E_H$ has a finite $K(E_H,1)$ and no Baumslag-Solitar subgroups but is \emph{not} hyperbolic. A well-know conjecture attributed to Gromov asserts that such a group does not exist (see \cite{KLSurvey} and \cite{FMo}). By Theorem \ref{intromain} an affirmative answer to the following questions would produce a counter-example to this conjecture.

\begin{question}Does there exist an admissible embedding $A(\Gamma)\hookrightarrow \Mod(S)$ for which some non-quasiconvex $H < A(\Gamma)$ is all-pseudo-Anosov?\end{question}

The condition for convex cocompactness in Theorem \ref{intromain} comes as corollary to a stronger, constructive result. In this form, we are able to identify when $H < A(\Gamma)$ is purely pseudo-Anosov by checking only finitely many mapping classes. Below, $f(\Gamma,K)$ is a positive integer valued function depending only on $\Gamma$ and a $K \ge 0$, and $A(\Gamma)$-\emph{length} is word length in $A(\Gamma)$ with respect to the generating set corresponding to vertices of $\Gamma$.

\begin{theorem}\label{construction}Suppose $A(\Gamma) < \Mod(S)$ is admissible and $H$ is $K$-quasiconvex in $\Gamma$.  Let $L = f(\Gamma,K)$.  Then $H$ is generated by words of $A(\Gamma)$-length less than $L$; if these are pseudo-Anosov, then $H$ is convex cocompact.\end{theorem}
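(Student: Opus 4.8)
The plan is to deduce Theorem~\ref{construction} from Theorem~\ref{intromain} by proving two facts internal to $A(\Gamma)$ — one unconditional, one about detecting reducibility — so that the embedding into $\Mod(S)$ enters only through Theorem~\ref{intromain} and the constant $L$ depends only on $\Gamma$ and $K$. For the unconditional half I would use the standard argument that a $K$-quasiconvex $H<A(\Gamma)$ is generated by $H\cap B(2K{+}1)$, where $B(r)$ is the $r$-ball in the vertex generating set: if $h=g_1\cdots g_n$ is a geodesic spelling of $h\in H$ over the vertices, quasiconvexity gives $h_i\in H$ with $\Dis{g_1\cdots g_i,\,h_i}\le K$ (take $h_0=1$, $h_n=h$), so $h=\prod_{i=1}^n h_{i-1}^{-1}h_i$ expresses $h$ as a product of elements of $A(\Gamma)$-length $\le 2K{+}1$. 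Hence any $L>2K{+}1$ gives the first assertion.

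For the second assertion, since $H$ is assumed quasiconvex in $A(\Gamma)$, Theorem~\ref{intromain} reduces the claim to: for a suitable $L=f(\Gamma,K)$, ``every element of $H$ of $A(\Gamma)$-length $<L$ is pseudo-Anosov'' forces every nontrivial element of $H$ to be pseudo-Anosov. In contrapositive form I must produce, from any reducible nontrivial $h\in H$, a reducible element of $H$ whose $A(\Gamma)$-length is bounded in terms of $\Gamma$ and $K$. Here I would invoke the characterization available in the admissible setting (cf.\ \cite{CLM} and the structure of parabolic subgroups of RAAGs): a nontrivial $w\in A(\Gamma)$ is pseudo-Anosov if and only if $w$ is not conjugate in $A(\Gamma)$ into $A(\Lambda)$ for any $\Lambda$ in the finite family $\mathcal L(\Gamma)$ of subgraphs whose associated subsurfaces fail to fill $S$. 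Thus $H$ fails to be purely pseudo-Anosov precisely when $H\cap gA(\Lambda)g^{-1}\ne\{1\}$ for some $\Lambda\in\mathcal L(\Gamma)$ and some $g\in A(\Gamma)$, and it suffices to bound, in terms of $\Gamma$ and $K$ alone, the $A(\Gamma)$-length of a shortest nontrivial element of such an intersection.

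I would carry this out in the CAT(0) cube complex $X$ (universal cover of the Salvetti complex) on which $A(\Gamma)$ acts properly and cocompactly. The $K$-quasiconvex $H$ acts cocompactly on a convex core $Y\subseteq X$ with $\diam{Y/H}$ bounded in terms of $\Gamma,K$; each standard subgroup $A(\Lambda)$ stabilizes a convex subcomplex $X_\Lambda$ with $\diam{X_\Lambda/A(\Lambda)}$ bounded in terms of $\Gamma$, so $gA(\Lambda)g^{-1}$ stabilizes $gX_\Lambda$. A nontrivial $h\in H\cap gA(\Lambda)g^{-1}$ is an infinite-order, hence combinatorially hyperbolic, isometry preserving the convex subcomplexes $Y$ and $gX_\Lambda$, which forces $Y$ and $gX_\Lambda$ to lie boundedly close along an axis of $h$; after translating by an element of $H$ (which replaces $H\cap gA(\Lambda)g^{-1}$ by the still-nontrivial $H\cap(h'g)A(\Lambda)(h'g)^{-1}$) one may assume $gX_\Lambda$ meets the $\diam{Y/H}$-neighborhood of the basepoint. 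Then the shortest element $g'$ of the coset $gA(\Lambda)$ satisfies $|g'|\le \diam{Y/H}+\diam{X_\Lambda/A(\Lambda)}=:N_0(\Gamma,K)$ and $g'A(\Lambda)g'^{-1}=gA(\Lambda)g^{-1}$. Now both $H$ and $g'A(\Lambda)g'^{-1}$ are quasiconvex with constants bounded by $\Gamma,K$, and a nontrivial element of their intersection forces their convex cores together near the basepoint and so produces a nontrivial — hence reducible — element of $H\cap g'A(\Lambda)g'^{-1}$ of $A(\Gamma)$-length at most some $N(\Gamma,K)$. Setting $L=f(\Gamma,K):=\max\{2K{+}2,\ N(\Gamma,K)+1\}$ completes the proof.

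I expect the technical heart and main obstacle to be exactly this last point: making ``the intersection $H\cap gA(\Lambda)g^{-1}$ has depth bounded by $\Gamma$ and $K$'' quantitative and uniform in $g$. The difficulty is that $g$ itself is a priori unbounded, and shortest double-coset representatives need not be; the way around it is to exploit that a nontrivial element of the intersection is loxodromic, relocate its axis into the bounded-diameter quotient $Y/H$, and then use that the (conjugated) retraction of $A(\Gamma)$ onto $A(\Lambda)$ has become uniformly Lipschitz to extract the required short reducible witness.
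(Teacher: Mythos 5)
Your first step (generation by $H\cap B(2K+1)$ via the standard quasiconvexity argument) is fine and matches the role played in the paper by the compact core $C$ of Lemma \ref{Cexists}. The rest of the proposal, however, has two serious problems. First, it is circular within this paper: Theorem \ref{intromain} is not an independent input but a \emph{corollary} of Theorem \ref{construction} --- its ``if'' direction is exactly Corollary \ref{pAcor}, which is deduced from Theorem \ref{main} (the actual content of Theorem \ref{construction}), and its ``only if'' direction is Theorem \ref{necessity}. So you cannot reduce Theorem \ref{construction} to Theorem \ref{intromain}; the implication ``quasiconvex and purely pseudo-Anosov $\Rightarrow$ convex cocompact'' is precisely what must be proved here. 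Second, even granting that implication as a black box, your remaining task --- producing from any reducible $h\in H$ a reducible element of $H$ of length bounded by $\Gamma$ and $K$ --- is the technical heart, and the step you lean on fails: a hyperbolic isometry preserving two convex subcomplexes does \emph{not} force them to be boundedly close with a bound depending only on $\Gamma$ and $K$. Already in $A(\Gamma)=\mathbb{Z}^2=\langle a,b\rangle$ with $H=\langle a\rangle$, $\Lambda=\{a\}$ and $g=b^m$, the element $a$ preserves both convex cores, which are parallel lines at distance $m$; translating by elements of $H$ never brings $gX_\Lambda$ near the basepoint. (Here one can of course replace $g$ by $1$, but your argument gives no mechanism for finding such a replacement in general, and the bound $N(\Gamma,K)$ is never established.) You flag this as the main obstacle, correctly, but the proposed fix does not address it.

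For comparison, the paper never passes through the intermediate claim ``all of $H$ is pseudo-Anosov''; that is obtained only a posteriori. Instead, the hypothesis that the short elements are pseudo-Anosov is used solely to guarantee that all elements of $H$ of length at most $\ell=\ell(\Gamma,K)$ \emph{fill} (Lemma \ref{subsfills}), i.e.\ that $H$ is $\ell$-short filling. Proposition \ref{quasicontrol} --- the place where quasiconvexity and the cube complex $C$ really enter --- shows that every subword of length $\ge\ell/3$ of a normal form of any $h\in H$ contains a conjugate of a short element of $H$, hence (Lemma \ref{blockexistence}) a filling block. Filling blocks force syllables to be ordered (Lemma \ref{blockcommutation}), admissibility converts syllable order into the subsurface order $\prec$, and the Bounded Geodesic Image Theorem \ref{BGIT} together with Lemma \ref{orderedtriple} then forces any geodesic $[\mu,\phi(h)\mu]$ in $\C(S)$ to pass through roughly $|h|/6\ell$ distinct vertices. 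This gives the linear lower bound on curve-complex displacement directly, which is Theorem \ref{main} and hence Theorem \ref{construction}. If you want to salvage your ``short reducible witness'' claim, the viable route is again Proposition \ref{quasicontrol}: for a non-filling $h\in H$, normal forms of high powers $h^n$ contain long subwords spelled only in the generators of a non-filling subgraph, and the short $H$-element extracted from such a subword is itself non-filling --- but at that point you have simply rebuilt the paper's machinery.
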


In particular, Theorem \ref{construction} provides flexible means to build explicit examples of convex cocompact subgroups of mapping class groups, distinguishing it from other constructions in the literature. See Section \ref{sec:examples} for some examples.

We briefly survey other methods of producing convex cocompact subgroups of mapping class groups.  The simplest are free groups generated by sufficiently high powers of any finite family of independent pseudo-Anosov mapping classes \cite{FMo}.  Later, Min \cite{Min} created virtually free examples isomorphic to $G*H$ for arbitrary finite subgroups $G,H < \Mod(S)$, by conjugating one of these groups by a sufficiently high power of a pseudo-Anosov.  A third set of examples live in certain hyperbolic groups embedded in the mapping class group $\Mod(\mathring{S})$ of a once-punctured surface $\mathring{S}$; these are the surface group extensions $E_G$ described above.  Generalizing \cite{KLS}, Dowdall, Kent, and Leininger prove that, when $E_G$ is hyperbolic, its quasiconvex all-pseudo-Anosov subgroups are convex cocompact in $\Mod(\mathring{S})$ \cite{DKL}.  The convex cocompact subgroups considered in this paper are most similar in spirit to these last examples, with $E_G$ replaced by $A(\Gamma)$ and cubical CAT(0) geometry playing a role similar to hyperbolicity. Here, the idea is to replace quasi-convex orbits in Teichm\"{u}ller space with combinatorially quasiconvex orbits in $\tilda{S}_{\Gamma}$, the CAT(0) cube complex associated to $A(\Gamma).$

To highlight a difference between the all-pseudo-Anosov free groups constructed by our method and earlier examples, we describe a family of convex cocompact subgroups of $\Mod(S_g)$ whose orbit maps into the curve complex have Lipschitz constants on the order of $1/g$. Let $\C(S)$ denote the curve complex of $S$ and $\ell_S(f)$ the stable translation length of $f \in \Mod(S)$ in $\C(S)$ (see Section  \ref{basics} for definitions).

\begin{theorem}\label{smalltrans}
Let $S_g$ be a surface of genus $g$ for some $g \ge3$. Then for any $N\ge1$ there exists a convex cocompact $H = \langle w_1, \ldots w_N \rangle < \Mod(S_g)$ with the following property: there is an $\alpha \in \mathcal{C}^0(S_g)$ so that for any $h \in H$,
\[d_{\mathcal{C}(S_g)}(\alpha,h\alpha) \leq |h|_H\cdot\frac{4}{g-1} + 2 ,\]
where $|.|_H$ denotes word length in $H$ with the given generators. In particular, $\ell_S(w_i) \le \frac{4}{g-1} $ for each $i = 1, \ldots, N$.

\end{theorem}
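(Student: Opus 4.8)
I propose to realize an admissible right‑angled Artin group in $\Mod(S_g)$ modeled on a long chain of overlapping subsurfaces, and to extract from it a convex cocompact free subgroup whose generators ``sweep across the chain one subsurface at a time.'' First I would fix a chain of curves $c_1,\dots,c_{2g}$ on $S_g$ (so $i(c_k,c_{k+1})=1$ and $i(c_k,c_l)=0$ for $|k-l|\ge 2$), whose regular neighborhood fills $S_g$. For $i=1,\dots,g$ let $W_i$ be a small regular neighborhood of $c_{2i-1}\cup c_{2i}$, a once‑holed torus; then $W_i$ and $W_j$ are disjoint when $|i-j|\ge 2$ and essentially overlap when $|i-j|=1$, since $c_{2i}\subset W_i$ and $c_{2i+1}\subset W_{i+1}$ intersect. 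Let $\Gamma$ be the graph on $\{1,\dots,g\}$ with $i\sim j$ iff $|i-j|\ge 2$ (the complement of a path), choose partial pseudo-Anosovs $\phi_i$ supported on $W_i$, and replace each by a sufficiently high power so that, by \cite{CLM} and the definition of admissibility, $A(\Gamma)=\langle\phi_1,\dots,\phi_g\rangle$ is admissible in $\Mod(S_g)$. Take $\alpha=c_1$: it lies essentially in $W_1$ and is disjoint from $W_2,\dots,W_g$, so $\phi_j\alpha=\alpha$ for $j\ge 2$ while $\phi_1^{\pm k}\alpha$ is again a curve in $W_1$; in particular $d_{\C(S_g)}(\alpha,\phi_1^{\pm k}\alpha)\le 2$.

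The core estimate is an escalation lemma. For $1\le k\le g$ set $Y_k=W_1\cup\dots\cup W_k$; for $k\le g-1$ this is (isotopic into) a proper essential subsurface of $S_g$ with connected boundary. If $w=\phi_1^{m_1}\phi_2^{m_2}\cdots\phi_g^{m_g}$ is any product whose factors occur in this \emph{increasing} order, then $w$ carries any curve supported in $Y_k$ to a curve supported in $Y_{k+1}$: reading $w$ from the right, the factors $\phi_j$ with $j\ge k+2$ are supported off $Y_k$ and fix the curve, the factor $\phi_{k+1}$ can push it at most into $W_{k+1}$, and every subsequent factor is supported inside $Y_{k+1}$. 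Since $\alpha\subset Y_1$ and the rightmost period of $w^n$ acting on $\alpha$ produces only $\phi_1^{m_1}\alpha\subset Y_1$, iterating gives $w^n\alpha\subset Y_n$; hence for $n\le g-1$ the curves $\alpha$ and $w^n\alpha$ lie in the common proper subsurface $Y_{g-1}$, so $d_{\C(S_g)}(\alpha,w^n\alpha)\le 2$. Letting $n\to\infty$ yields $\ell_S(w)\le 2/(g-1)\le 4/(g-1)$ for any pseudo-Anosov $w$ of this increasing‑pass form — and such passes with high enough exponents are pseudo-Anosov, their supports filling $S_g$.

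It then remains to choose increasing passes $w_1,\dots,w_N$ so that $H=\langle w_1,\dots,w_N\rangle$ is free of rank $N$, quasiconvex in $A(\Gamma)$, and purely pseudo-Anosov — by Theorem~\ref{intromain} (or Theorem~\ref{construction}) $H$ is then convex cocompact — and so that in addition any reduced $H$-word of length $\le (g-1)/2$ displaces $\alpha$ by at most $2$. For this I would take the exponents in the passes to be large and suitably staggered and run a ping-pong argument — in $\C(S_g)$, or in the $\mathrm{CAT}(0)$ cube complex $\widetilde S_\Gamma$ on which $A(\Gamma)$ acts — whose ping-pong regions are arranged ``along the chain,'' so that the argument simultaneously certifies freeness, quasiconvexity, avoidance of all proper (non‑filling) parabolic subgroups of $A(\Gamma)$ (hence purely pseudo-Anosov), and the fact that a length‑$m$ word only penetrates $O(m)$ subsurfaces into the chain. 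Finally, given $h\in H$ with $|h|_H=n$, write $h$ as a product of at most $\lceil 2n/(g-1)\rceil$ $H$-words each of length at most $(g-1)/2$; applying the word bound to each factor together with the triangle inequality and $\Mod(S_g)$-equivariance of $\C(S_g)$ gives $d_{\C(S_g)}(\alpha,h\alpha)\le 2\lceil 2n/(g-1)\rceil\le |h|_H\cdot\frac{4}{g-1}+2$, and the translation-length statement follows by taking $h=w_i^{\,n}$.

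\emph{The main obstacle is the last step.} There is a genuine tension: to be pseudo-Anosov a single element must ``use'' essentially the whole chain, so it is forced to be a full increasing pass with about $g$ syllables; yet these long passes share a great deal of structure, and naive choices (passes agreeing in all but a few slots) produce relations — worse, they produce non‑pseudo-Anosov elements supported on proper subsurfaces. Moreover the inverse of an increasing pass is a \emph{decreasing} pass, which does not satisfy the escalation lemma and in general spreads the support of $\alpha$ across all of $S_g$ in a single step, so the curve-complex estimate for mixed words cannot be read off from the simple window argument above and must instead come from the ping-pong geometry itself. Keeping the construction explicit — rather than invoking Theorem~\ref{construction} abstractly, whose constant $f(\Gamma,K)$ grows with $g$ and could otherwise spoil the escalation estimate — and engineering the ping-pong so that freeness, quasiconvexity, the purely-pseudo-Anosov property, and the $O(1/g)$ displacement bound all fall out together, is the heart of the argument.
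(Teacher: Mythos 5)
Your setup --- an admissible $A(\Gamma)$ supported on a chain of overlapping subsurfaces, with generators that traverse the whole chain so that they fill (hence are pseudo-Anosov) yet move a fixed curve $\alpha$ slowly --- is the right idea, and your escalation lemma does give $\ell_S(w)\le 2/(g-1)$ for a single increasing pass. But there is a genuine gap exactly where you flag it, and it is not merely technical: the displacement bound must hold for \emph{every} $h\in H$, and for your linear chain the natural induction breaks the moment an inverse generator appears. Since consecutive $\phi_k,\phi_{k+1}$ do not commute, applying $w^{-1}=\phi_g^{-m_g}\cdots\phi_1^{-m_1}$ to any curve other than a $w$-image cascades the support all the way down the chain in a single step: already for $h=w^{-1}w'$ with $w\ne w'$ one meets $\phi_1^{m_1'-m_1}\alpha$, a curve essentially intersecting $W_2$, which is then pushed successively through $W_3,\dots,W_g$. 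You defer this to ``the ping-pong geometry itself,'' but no ping-pong argument is given; ping-pong in $\C(S)$ is in direct tension with the very smallness of translation length you are trying to achieve, while ping-pong in $\widetilde{S}_\Gamma$ could give freeness and quasiconvexity but not, by itself, either the curve-complex displacement bound or the purely-pseudo-Anosov property. So the central inequality of the theorem is not established.

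The paper's construction is engineered precisely to avoid this cascade. It uses a \emph{cyclic} arrangement of $2n$ subsurfaces (with $\Gamma^c$ a $2n$-cycle) and generators $w_i=B_iM_iE_i$, where $B_i$ and $E_i$ are products of pairwise \emph{commuting} partial pseudo-Anosovs (disjoint supports) and $M_i$ touches only two subsurfaces. Because the factors of $B$ and of $E$ commute, applying $B^{\pm1}$ or $E^{\pm1}$ enlarges the subsurface supporting a curve by at most one step around the cycle --- for the generator \emph{and} for its inverse $v=E^{-1}M^{-1}B^{-1}$ --- which is what makes the induction (claim $(\star)$: $|h|_H\le k$ implies $h\alpha\subset\overline{X}_k$ or $\overline{Y}_k$, both proper for $k\le n/2$) go through for arbitrary reduced words. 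Convex cocompactness is then verified not by ping-pong but by a normal-form analysis: every sufficiently long subword of the $BME$ normal form contains a filling block, which feeds into Corollary \ref{filling}. To repair your argument you would need either to pass to such a cyclic configuration with commuting blocks, or to supply an actual inductive control on how far mixed words spread $\alpha$ along your chain --- which, with non-commuting consecutive factors, they do not admit.
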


The pseudo-Anosovs $w_i$ appearing in Theorem \ref{smalltrans} are themselves $g$th powers. Since it is known that the minimum stable translation length in the curve complex is of order $1/g^2$ \cite{GT}, it would be interesting to construct rank $\ge 2$ convex cocompact subgroups whose orbit maps into the curve complex have Lipschitz constants on the order of $1/g^2$.

This paper is organized as follows: Section $2$ contains background on surfaces and mapping class groups. Section $3$ briefly reviews some cubical geometry and proves the main technical property of quasiconvex subgroups of right-angled Artin groups used in this paper. The definition of admissibility is then given in Section $4$, where some properties of admissible homomorphisms from right-angled Artin groups into mapping class groups are established. The proof of Theorem \ref{construction} is given in Section $5$. Section $6$ completes the proof of Theorem \ref{intromain} and Section $7$ contains explicit constructions of convex cocompact mapping class subgroups. The paper concludes with Section $8$, which proves Theorem \ref{smalltrans}.

\textbf{Acknowledgments.}  We thank Centre de Recerca Matem\`{a}tica and the Polish Academy of Sciences for hosting both authors during parts of this research.  

\section{Surfaces and mapping classes} \label{basics}
\subsection{Quasiconvexity and quasi-isometry}\label{quasi}
Let $(X,d_X)$ and $(Y,d_Y)$ be metric spaces.  For constants $K\ge 1$ and $L \ge 0$, a map $f: X \to Y$ is a \emph{$(K,L)$-quasi-isometric embedding} if for all $x_1,x_2 \in X$
$$\frac{d_X(x_1,x_2) -L}{K} \le d_Y(f(x_1),f(x_2)) \le K (d_X(x_1,x_2) +L). $$
In addition, if every point in $Y$ is within a bounded distance from the image $f(X)$, then $f$ is a \emph{quasi-isometry} and $X$ and $Y$ are said to be \emph{quasi-isometric}.  Where $I$ is a subinterval of $\R$ or $\Z$, we call a $(K,L)$-quasi-isometric embedding $f:I \to Y$ a \emph{$(K,L)$-quasi-geodesic}. If $K =1$ and $L=0$, then $f: I \to Y$ is a \emph{geodesic}.  We say $Y$ is a \emph{geodesic metric space} if for all $y_1,y_2 \in Y$, there is a a geodesic $f:[a,b] \to Y$ with $f(a) =y_1$ and $f(b)=y_2$. For example, giving unit length to each edge of a graph $G$, i.e. a $1$-dimensional CW complex, turns $G$ into a geodesic metric space by taking the induced path metric.

For convenience, we write $K$-quasigeodesic or $K$-quasi-isometric embedding to mean a $(K,K)$-quasigeodesic or $(K,K)$-quasi-isometric embedding respectively.

We say a subset $X'$ of a geodesic metric space $X$ is \emph{$K$-quasiconvex} if for any $x,y \in X'$ and any geodesic $[x,y]$ between $x,y$ in $X$,
\[[x,y] \subset N_{K}(X').\]  We say $X'$ is \emph{quasiconvex} if it is $K$-quasiconvex for some $K$.  When we speak of a quasiconvex subgroup $H$ of a group $G$, we have fixed a word metric on $G$ with respect to some finite generating set (changing generating sets can change which subgroups are quasiconvex).  For CAT(0) cube complexes we additionally define \emph{combinatorial quasiconvexity} in Section \ref{cubegeo}.

\subsection{Surface topology basics}
Here we recall some relevant information about surfaces. For additional details, we refer the reader to \cite{MM2, FM}. Fix a surface $S$ of genus $g$ with $p$ punctures and $b$ boundary components. The \emph{complexity} of $S$ is the quantity $\xi(S) =3g-3+p+b$. In this paper we will only consider surfaces with $\xi(S) \ge 1$ and we will often not distinguish between boundary components and punctures of $S$. By a \emph{subsurface} $X$ of $ S$, we mean a compact submanifold such that the homomorphism $\pi_1X \to \pi_1S$ induced by inclusion is injective. Hence, all subsurfaces are assumed to be \emph{essential}. The subsurface $X \subset S$ is \emph{nonannular} if $X$ is not homeomorphic to an annulus.  Denote by $\Omega(S)$ the set of isotopy classes of nonannular subsurfaces of $S$. Although annuli play an important part in the analysis of \cite{MM2} and subsequence work, they are not considered in this paper. A \emph{curve} $\gamma$ in $S$ is an essential, simple loop in $S$, i.e. the image of a $\pi_1$-injective embedding of the circle into $S$. Recall that $\gamma$ is \emph{essential} if it is homtopically nontrivial and not parallel to a boundary component or puncture. As is standard in the subject, we often do not distinguish between a curve and its isotopy class or a subsurface and its isotopy class.

The \emph{curve complex} $\C(S)$ of the surface $S$ is the graph with vertex set the collection of isotopy classes of curves in $S$. Vertices $v$ and $w$ are joined by an edge in $\C(S)$ if $v$ and $w$ have disjoint representatives in $S$. When $S$ has complexity $1$, that is when $S$ is a once-punctured torus or a four-times punctured sphere, this definition produces a graph without edges and so is modified as follows: the vertices of $\C(S)$ are unchanged, but two vertices are joined by an edge if the corresponding curves have the minimal number of intersections among pairs of curves on $S$. Hence, curves on the once-punctured surface intersecting once are joined by an edge, as are curves intersecting twice on the four-times punctured sphere. With this definition, if $S$ has complexity $1$ then $\C(S)$ is the standard Farey graph. In general, we consider $\C(S)$ with its standard graph metric where each edge is assigned unit length.  In Section \ref{mcgbasics} we describe the action of $\Mod(S)$ on $\C(S)$ by isometry.  The foundational result in the study of the coarse geometry of the mapping class group is the following:

\begin{theorem}\cite{MM1}
For $S$ with $\xi(S) \ge 1$, $\C(S)$ is Gromov hyperbolic.
\end{theorem}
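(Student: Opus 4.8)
This is the Masur--Minsky theorem; their original argument is Teichm\"uller-geometric, but the plan I would follow is the combinatorial one, via \emph{unicorn paths} together with a standard ``guessing geodesics'' criterion for hyperbolicity. That criterion states: if $X$ is a connected graph and for every pair of vertices $x,y$ one is given a connected subgraph $\mathcal{L}(x,y)\ni x,y$ such that (i) $d_X(x,y)\le 1$ implies $\mathrm{diam}(\mathcal{L}(x,y))\le M$, and (ii) $\mathcal{L}(x,y)\subseteq N_M\big(\mathcal{L}(x,z)\cup\mathcal{L}(z,y)\big)$ for all vertices $x,y,z$, then $X$ is $\delta$-hyperbolic with $\delta=\delta(M)$ (and each $\mathcal{L}(x,y)$ lies uniformly Hausdorff-close to a geodesic). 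So it suffices to build such a family of paths in a graph quasi-isometric to $\C(S)$.

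Assume first that $S$ has a puncture, and work in the arc graph $\mathcal{A}(S)$: vertices are essential simple arcs with endpoints at the punctures, edges are disjointness. Given arcs $a,b$ realized in minimal position together with chosen endpoints $\alpha\in\partial a$ and $\beta\in\partial b$, consider, for each $\pi\in a\cap b$ for which the sub-arc $a'$ of $a$ from $\alpha$ to $\pi$ and the sub-arc $b'$ of $b$ from $\pi$ to $\beta$ meet only at $\pi$, the concatenation $a'\cup b'$; it is an embedded essential arc, the \emph{unicorn arc} $c(\pi)$. Ordering the $c(\pi)$ by inclusion of their $a'$-pieces and adjoining $a,b$ as the extreme terms gives a sequence $a=c_0,c_1,\dots,c_n=b$. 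The first task is to verify, by a direct surgery argument, that consecutive $c_i,c_{i+1}$ admit disjoint representatives, so that $P(a,b):=(c_0,\dots,c_n)$ is an edge-path in $\mathcal{A}(S)$ --- the \emph{unicorn path}.

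The crux is the ``slim triangles'' lemma: for arcs $a,b,c$ in pairwise minimal position, every vertex of $P(a,b)$ is disjoint from, or equal to, some vertex of $P(a,c)\cup P(c,b)$ --- precisely condition (ii) with $M=1$. One proves this by writing a unicorn on $P(a,b)$ as $a'\cup b'$ and analyzing the finitely many ways in which a suitable sub-arc of $c$ (itself forming a unicorn with $a$ or with $b$) can meet $a'$ and $b'$, exhibiting in each case the required disjoint unicorn. Condition (i) is then immediate, since disjoint arcs give a unicorn path of diameter $1$; and the dependence of $P(a,b)$ on the choice of endpoints only affects constants, since different choices yield uniformly Hausdorff-close paths. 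The criterion now gives that $\mathcal{A}(S)$ is (uniformly) hyperbolic.

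It remains to pass to $\C(S)$. For $S$ with a puncture this is routine surgery: $\C(S)$ sits as a $1$-dense, quasi-isometrically embedded subgraph of the arc-and-curve graph $\mathcal{AC}(S)$ --- replace an arc by a boundary curve of a regular neighborhood of the arc and its endpoints, and surger any path running through arcs into a comparable path through curves --- so $\C(S)\simeq\mathcal{AC}(S)$, and $\mathcal{AC}(S)$ is hyperbolic by the same unicorn argument, now permitting curves among the endpoints; the finitely many surfaces with $\xi(S)=1$, where $\C(S)$ is the Farey graph, are elementary. The closed case requires a further reduction, or is instead subsumed by the Teichm\"uller-geometric argument of Masur and Minsky, which treats all surfaces at once: there one pushes estimates through the coarse systole map $\T(S)\to\C(S)$, shows that Teichm\"uller geodesics project to uniform unparametrized quasi-geodesics, and proves that these projected quasi-geodesics admit strongly contracting nearest-point projections, whence hyperbolicity. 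I expect the slim-triangles lemma to be the main obstacle in the combinatorial approach --- a careful but elementary case analysis --- just as the contraction estimate is the main obstacle in the Teichm\"uller one.
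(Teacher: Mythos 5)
This theorem is not proved in the paper at all --- it is quoted from \cite{MM1} --- so the only meaningful comparison is with the cited proof, which is the original Teichm\"uller-geometric one: Masur and Minsky push estimates through the systole map $\T(S)\to\C(S)$, show Teichm\"uller geodesics project to unparametrized quasi-geodesics, and establish a contraction property. Your primary route, via unicorn paths and the ``guessing geodesics'' criterion, is a genuinely different and by now standard alternative (Hensel--Przytycki--Webb, building on criteria of Masur--Schleimer, Bowditch, and Hamenst\"adt), and your outline of it is essentially correct: the unicorn arcs between $a$ and $b$ do form an edge-path, the $1$-slim triangle lemma is exactly condition (ii) with $M=1$, and the passage $\mathcal{A}(S)\simeq\mathcal{AC}(S)\simeq\C(S)$ for punctured surfaces is routine. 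What each approach buys is different: the combinatorial proof is elementary and yields hyperbolicity constants \emph{uniform} over all surfaces, whereas the Teichm\"uller proof yields the contraction and stability statements about Teichm\"uller geodesics that underlie the hierarchy machinery of \cite{MM2} used throughout this paper. The one place your sketch is genuinely incomplete is the closed case: there are no arcs on a closed surface, so the unicorn argument does not apply directly, and the reduction you wave at (puncturing $S$ and relating $\C(\dot S)$ to $\C(S)$, or replacing unicorn arcs by bicorn curves) requires a further nontrivial lemma --- a coarse retraction or a direct slim-triangle argument for bicorns --- that you would need to supply to cover all $S$ with $\xi(S)\ge 1$ by this route; as you note, falling back on the Masur--Minsky argument handles all cases at once.
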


Recall that a geodesic metric space is Gromov hyperbolic if there exists a $\delta \ge 0$ so that for any points $x,y,z$ and geodesics $[x,y], [y,z]$ and $[z,x]$ between the three points,
$$[x,y] \subset N_{\delta}([y,z] \cup [z,x]) $$ 
where $N_{\delta}$ denotes a $\delta$-neighborhood in $\C(S)$.

A \emph{pants decomposition} $P$ of $S$ is a maximal collection of pairwise connected vertices of $\C(S)$, or in terms of the surface, a maximal collection of isotopy classes of pairwise disjoint curves in $S$. A \emph{marking} $\mu$ of $S$ is a pants decomposition $P = \{ \gamma_1, \ldots, \gamma_{\xi(S)} \}$ with the following additional structure: for each $\gamma_i \in P$ there is a corresponding curve $\beta_i$ contained in $S \setminus (P \setminus \gamma_i)$ that intersects $\gamma_i$ in the minimal possible number of times. In other words, if $X$ is the complexity $1$ component of $S \setminus (P \setminus \gamma_i)$ then $\beta_i$ is any adjacent curve to $\gamma_i$ in $\C(X)$. Then $\mu = \{(\gamma_1,\beta_1), \ldots, (\gamma_{\xi(S)}, \beta_{\xi(S)})  \}$ is a marking of $S$. The underlying pants decomposition of the marking $\mu$ is called the \emph{base} of $\mu$ and is denoted base($\mu$). In the terminology of \cite{MM2}, what we have described is a complete clean marking of $S$ and in this reference the authors construct the marking complex $\mathcal{M}(S)$ whose vertices are markings of $S$ and edges are determined by certain elementary moves on markings. The details will not be reviewed here; however, it suffices for us to recall that $\mathcal{M}(S)$ is a locally finite, connected graph and the natural action of $\Mod(S)$ is proper and cocompact. Hence, by the Svarc-Milnor lemma, $\mathcal{M}(S)$ is quasi-isometric to $\Mod(S)$. Again, see \cite{MM2} for details.

For a curve $\gamma$ and a nonannular subsurface $X$ of $S$ we define the \emph{subsurface projection of $\gamma$ to $X$}, denoted $\pi_X(\gamma)$, as follows: first realize $\gamma$ and $\partial X$ with minimal intersection, for example by taking geodesic representatives in some hyperbolic metric on $S$. If $\gamma$ does not intersect $\partial X$ then either $\gamma \subset X$, in which case we set $\pi_X(\gamma) = \{\gamma\}$, or $\gamma$ does not intersect $X$ and $\pi_X(\gamma)$ is the empty set. Otherwise, $\gamma \cap X =\{\gamma_1,\ldots, \gamma_k \}$ is an nonempty collection of essential arcs in $X$ and $\pi_X(\gamma)$ is the subset of $\C(X)$ whose elements are isotopic to the boundary of a regular neighborhood of the union of $\partial X$ and $\gamma_i$ for some $1\le i \le k$. In other words, the curves of $\pi_X(\gamma)$ arise from performing surgery along $\partial X$ to the arcs of $\gamma \cap \partial X$. This gives a subset of $\C(X)$ with diam$_X(\pi_X(\gamma)) \le 3$, where diam$_X$ denotes the diameter of a collection of vertices in $\C(X)$. Observe that if $\pi_X(\gamma) = \emptyset$ then $\gamma \in N_1(\partial X)$. When $\pi_X(\gamma) \neq \emptyset $, then we say that $\gamma$ \emph{cuts} $X$; otherwise $\gamma$ \emph{misses} $X$.

The following result of Masur-Minsky gives control over the projection of a geodesic in the curve complex to the curve complex of a subsurface. Its main application in this paper is to force curve complex geodesics to run though prescribed regions of $\C(S)$ thereby guaranteeing a definite length.

\begin{theorem}[Bounded Geodesic Image Theorem \cite{MM2}]\label{BGIT}
Given $S$ as above, there is $K_{BGI} \ge 0$ so that if $g$ is a geodesic in $\C(S)$ and $Y$ is a subsurface of $S$ with $\mathrm{diam}_{Y}(g) \ge K_{BGI}$ then there is a vertex $\alpha$ of $g$ with $\pi_X(\alpha) = \emptyset$.
\end{theorem}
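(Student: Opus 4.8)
The plan is to establish the contrapositive: assuming that every vertex of the geodesic $g=v_0v_1\cdots v_n$ in $\C(S)$ cuts $Y$, show that $\mathrm{diam}_Y(g)$ is bounded above by a constant depending only on the topological type of $Y$. Since $S$ is fixed, its essential subsurfaces fall into only finitely many topological types, so such a bound at once produces a single $K_{BGI}$ valid for all $Y$; in particular one needs only that each curve graph $\C(Y)$ is Gromov hyperbolic, not that it is uniformly so. The elementary ingredient is the \emph{coarse Lipschitz property} of subsurface projection: if $\gamma,\gamma'$ are curves admitting disjoint representatives, both cutting $Y$, then $\mathrm{diam}_Y(\pi_Y(\gamma)\cup\pi_Y(\gamma'))$ is at most a universal constant (realize $\gamma$, $\gamma'$ and $\del Y$ in pairwise minimal position; the arcs $\gamma\cap Y$ and $\gamma'\cap Y$ are then disjoint, so the curves obtained by surgering them to $\del Y$ lie a bounded distance apart in $\C(Y)$). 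Applied to consecutive vertices of $g$, this says that $\pi_Y(v_0),\pi_Y(v_1),\ldots,\pi_Y(v_n)$ is a path in $\C(Y)$ whose successive terms lie a bounded distance apart. The entire content of the theorem is that this path, although it may have roughly $n$ steps, has \emph{bounded diameter}.

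To do better than the useless bound $\mathrm{diam}_Y(g)\le C_0\, d_{\C(S)}(v_0,v_n)$ coming from coarse Lipschitzness alone, one must exploit hyperbolicity. I would replace $g$ by a combinatorial path with manageable subsurface projections: working in the arc-and-curve graph $\mathcal{AC}(S)$ (quasi-isometric to $\C(S)$) and realizing the endpoints by arcs, take a \emph{unicorn path} from $v_0$ to $v_n$, whose vertices are obtained by surgering together an initial sub-arc of $v_0$ with a terminal sub-arc of $v_n$. Unicorn paths are uniform unparametrized quasi-geodesics (Hensel--Przytycki--Webb), so hyperbolicity of $\mathcal{AC}(S)$ forces each vertex of $g$ to lie within a universal distance of a unicorn vertex. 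The structural point is that a unicorn vertex $c$, being built from a piece of $v_0$ and a piece of $v_n$, either misses $Y$ or projects to within a universal constant of $\pi_Y(v_0)\cup\pi_Y(v_n)$, since surgering $c$ to $\del Y$ yields a curve disjoint from one of the two surgered endpoints. Combining this with the bounded increments of $\pi_Y$ along the unicorn path: if no unicorn vertex misses $Y$, then the projected unicorn path runs from near $\pi_Y(v_0)$ to near $\pi_Y(v_n)$ in bounded steps while every intermediate point is near $\pi_Y(v_0)$ or near $\pi_Y(v_n)$, which is impossible unless $d_Y(v_0,v_n)$ is itself universally bounded; and if some unicorn vertex does miss $Y$, one descends to a vertex of $g$ with empty $Y$-projection, contradicting our assumption, so this case does not arise. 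Applying the first alternative also to the unicorn path of each sub-geodesic $v_i\cdots v_j$ then bounds every $\mathrm{diam}_Y(\pi_Y(v_i)\cup\pi_Y(v_j))$, hence $\mathrm{diam}_Y(g)$.

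The step I expect to be the main obstacle is the passage from the unicorn path back to the given geodesic in the case that a unicorn vertex misses $Y$. Uniform fellow-travelling only places some vertex of $g$ within a bounded distance of $\del Y$ in $\mathcal{AC}(S)$, which is strictly weaker than lying in $N_1(\del Y)$, i.e.\ than having empty $Y$-projection; upgrading this genuinely requires the efficiency of the geodesic $g$ and is where the hyperbolic geometry does its real work. One may instead follow the original argument of Masur--Minsky, which derives the bound from the Lipschitz projection lemma together with the structure of resolutions of markings and tight geodesics, at the cost of substantially more machinery; that route sidesteps the fellow-travelling issue but requires building the hierarchy technology first.
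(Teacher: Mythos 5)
This theorem is imported: the paper cites it from \cite{MM2} and gives no proof, so there is no internal argument to compare yours against. Judged on its own terms, your sketch follows the strategy of Webb's short proof of the bounded geodesic image theorem via unicorn paths; the reduction to the contrapositive, the coarse Lipschitz estimate for projections of disjoint curves, and the remark that finitely many topological types of $Y$ suffice are all sound. But the argument has a genuine gap, and it is exactly the one you flag. In the case that some unicorn vertex misses $Y$, you must produce a vertex $\alpha$ of the \emph{given geodesic} $g$ with $\pi_Y(\alpha)=\emptyset$ in order to contradict the standing assumption that every vertex of $g$ cuts $Y$. Uniform fellow-travelling only yields a vertex of $g$ within bounded distance of a curve disjoint from $Y$, and that is worthless here: a curve at distance $1$ from a curve missing $Y$ can cut $Y$ and have arbitrarily large projection to it, so ``$\pi_Y(\alpha)=\emptyset$'' is not a coarsely stable condition and cannot be transferred across a fellow-travelling constant. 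Without resolving this case the contrapositive is not established; you have shown only that either $\mathrm{diam}_Y(g)$ is bounded or some unicorn vertex misses $Y$, and the second alternative gives no bound at all.

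There is also a smaller soft spot in the claim that every unicorn vertex $c$ cutting $Y$ projects uniformly close to $\pi_Y(v_0)\cup\pi_Y(v_n)$: the component of $c\cap Y$ containing the surgery point of $c$ is a concatenation of a piece of $v_0$ with a piece of $v_n$ and need not be close to an arc of either $v_0\cap Y$ or $v_n\cap Y$; this is harmless when $c\cap Y$ has another component but requires an argument when it does not. The known repairs avoid fellow-travelling altogether: Webb's argument runs an induction on $d_{\C(S)}(v_0,v_n)$ and resolves the dichotomy through the combinatorial nesting properties of unicorn arcs rather than through $\delta$-thinness, while Masur--Minsky derive the statement from the machinery of hierarchies. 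Either route supplies the missing step; as written, your proposal does not, and the missing step is precisely where, as you say, ``the hyperbolic geometry does its real work.''
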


We can also project markings to the curve complex of a subsurface. For a marking $\mu$ and (nonannular) subsurface $X$ the projection of $\mu$ to $\C(X)$ is defined as
$$\pi_X(\mu) = \cup_{\gamma \in \mathrm{base}(\mu)} \pi_X(\gamma) .$$
For $\alpha, \beta$ either curves or markings, we set
$$d_X(\alpha,\beta) = \mathrm{diam}_{\C(X)}(\pi_X(\alpha) \cup \pi_X(\beta)),$$
when defined. With this notation, it is well known that the projection from the marking complex to the curve complex of a subsurface is coarsely $4$-Lipschitz. 

Given connected, non-isotopic, proper subsurfaces $X$ and $Y$ of $S$, there are three possibilities for their relative position in $S$ and these possibilities are captured by subsurface projections. If $X$ and $Y$ are \emph{disjoint}, then $\pi_X(\partial Y)= \emptyset = \pi_Y(\partial X)$. $X$ and $Y$ are \emph{nested} if, up to switching $X$ and $Y$, $X\subset Y$ up to isotopy in which case $\pi_Y(\partial X) \neq \emptyset$ but $\pi_X(\partial Y) = \emptyset$. Finally, if $X$ and $Y$ are neither disjoint nor nested, then $X$ and $Y$ \emph{overlap} and $\pi_X(\partial Y) \neq \emptyset \neq \pi_Y(\partial X)$. We use the notation $X \pitchfork Y$ to denote that $X$ and $Y$ overlap. A collection $\mathbb{X}=\{X_1, \ldots, X_n\}$ of nonannular proper subsurfaces of $S$ is \emph{cleanly embedded} if whenever two subsurface in the collection intersect essentially they overlap, i.e. 
$$X_i \cap X_j \neq \emptyset \implies X_i \pitchfork X_j  \quad \mathrm{for} \  i \neq j .$$
In this case, we can consider the associated \emph{coincidence graph} $\Gamma_{\mathbb{X}}$ whose vertices are labeled $v_1, \ldots, v_n$ in correspondence to the subsurfaces of $\mathbb{X}$ and $v_i$ is joined by an edge to $v_j$ if and only if $X_i \cap X_j = \emptyset$. See, for example, Figure \ref{fig1}. In general, a collection of subsurfaces $\{X_1, \ldots, X_n\}$ \emph{fills} the surface $S$ if for any curve $\gamma$, there is an $i$ such that $\pi_{X_i}(\gamma) \neq \emptyset$. That is, there is a subsurface $X_i$ in the collection that is cut by $\gamma$.

\begin{figure}[htbp]
\begin{center}
\includegraphics[height=50mm]{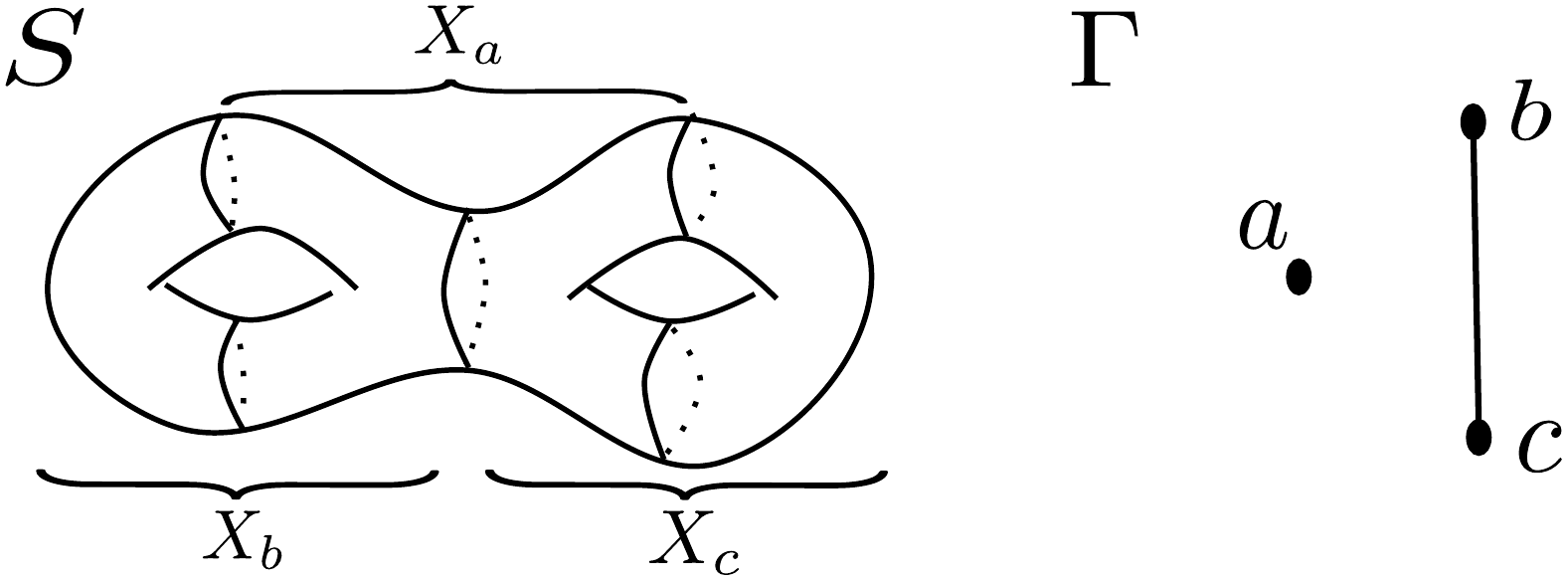}
\caption{Cleanly embedded subsurfaces with coincidence graph}
\label{fig1}
\end{center}
\end{figure}

\subsection{Mapping class group basics}\label{mcgbasics}
The mapping class group of the surface $S$ is the group of isotopy classes of orientation preserving homeomorphisms of $S$ and is denoted $\Mod(S)$. For a nonannular subsurface $X$ and $f \in \Mod(S)$, if $f$ has a representative homeomorphism that is the identity outside of $X$ then $f$ is said to be \emph{supported} on $X$. The mapping classes supported on $X$ are precisely those mapping classes in the image of the natural map $\Mod(X) \to \Mod(S)$ (see \cite{FM} for details). $f$ is \emph{fully supported} on $X$ if $f$ is supported on $X$ and the restriction of $f$ to $X$, denoted $f_X$, is pseudo-Anosov. Recall that $g \in \Mod(X)$ is \emph{pseudo-Anosov} if no positive power of $g$ fixes an isotopy class of curve in $X$. See \cite{FLP} for the standard definition of pseudo-Anosov and the equivalence to the definition given here. There is a natural action of $\Mod(S)$ on the curve complex $\C(S)$ obtained by extending the action of $\Mod(S)$ on isotopy classes of curves.

For $f \in \Mod(S)$, define the \emph{(stable) translation length} of $f$ on $\C(S)$ as follows: 
\[\ell_S(f)= \lim_{n\to \infty} \frac{d_{S}(\alpha, f^n(\alpha))}{n}.\]
where $\alpha$ is any vertex in $\V(S)$.  It is a standard exercise to verify that this limit exists and is independent of $\alpha$, and that $\ell_S(f^n) = n \cdot \ell_S(f)$. In \cite{MM1}, it is shown that if $f \in \Mod(S)$ is pseudo-Anosov then $\ell_S(f) \ge c >0$, where $c$ depends only on the topology of $S$. This implies that if $f$ is fully supported on the subsurface $X$ then $\ell_X(f) := \ell_X(f_X) > 0$. For more discussion on translation length see Section \ref{shortsec}.

Finally, recall that the Teichm\"{u}ller space of $S$, denoted $\mathcal{T}(S)$, is the space of marked hyperbolic structures on the surface $S$. We will consider $\mathcal{T}(S)$ with its Teichm\"{u}ller metric $d_{\mathcal{T}}$ and refer the reader to \cite{FM} for precise definitions and basic properties, including details about the action of $\Mod(S)$ on $\mathcal{T}(S)$ by isometries.  With this background at hand, we define the subgroups of the mapping class group that are of interest in this paper.

\begin{definition}
Let  $S$ be a surface with $\xi(S) \ge1$ and $H$ be a finitely generated subgroup of $\Mod(S)$. Then $H$ is \emph{convex cocompact} if for some $\mathcal{X} \in \mathcal{T}(S)$ the orbit $H\cdot \mathcal{X}$ is quasiconex (with respect to the Teichm\"{u}ller metric).
\end{definition}  

Farb and Mosher introduced convex cocompact subgroups of mapping class groups in \cite{FMo}, confirmed that the definition does not depend on the basepoint $\mathcal{X}$, and proved its equivalence to a detailed condition requiring both hyperbolicity of the subgroup and its well-behaved, cocompact action on a ``weak'' convex hull in $\T(S)$, justifying the analogy with convex cocompact Kleinian groups. Of particular relevance to this paper is the following characterization of convex cocompact subgroups of mapping class groups:

\begin{theorem}\cite{KL2,H}
Let  $S$ be a surface with $\xi(S) \ge1$ and $H$ be a finitely generated subgroup of $\Mod(S)$. Then $H$ is convex cocompact if and only if for some (any) $\alpha \in \C^0(S)$ the orbit map
$$H \to \C(S)$$
given by $h \mapsto h\cdot \alpha$ is a quasi-isometric embedding.
\end{theorem}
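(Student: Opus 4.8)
The plan is to pass back and forth between the Teichm\"uller metric and the curve complex using the Masur--Minsky \emph{systole projection} $\Upsilon\colon\T(S)\to\C(S)$, which sends a hyperbolic structure to one of its shortest curves. Three inputs are needed. (i) From \cite{MM1,MM2}: $\Upsilon$ is coarsely Lipschitz and coarsely $\Mod(S)$-equivariant, carries every Teichm\"uller geodesic to an unparametrized quasigeodesic of $\C(S)$, and along any Teichm\"uller geodesic contained in the $\epsilon$-thick part $\T_\epsilon(S)$ is in fact a $(K_\epsilon,K_\epsilon)$-quasi-isometric embedding. (ii) Klarreich's theorem, identifying $\partial\C(S)$ with the space $\mathcal{EL}(S)$ of minimal filling geodesic laminations, compatibly with $\Upsilon$: the $\Upsilon$-image of a Teichm\"uller geodesic with filling vertical foliation is a quasigeodesic ray converging to the corresponding point of $\partial\C(S)$. (iii) Farb--Mosher's structure theory \cite{FMo}: a convex cocompact $H$ is word-hyperbolic, its limit set $\Lambda H\subset\mathcal{PML}(S)$ consists of filling laminations, and $H$ acts cocompactly on the \emph{weak hull} $\mathcal{W}=\mathcal{W}(\Lambda H)$, the union of Teichm\"uller geodesics with both ends in $\Lambda H$; moreover these properties characterize convex cocompactness.

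\emph{Forward direction.} If $H$ is convex cocompact, cocompactness of the $H$-action on $\mathcal{W}$ forces $\mathcal{W}\subset\T_\epsilon(S)$ for some $\epsilon>0$: the systole function is $\Mod(S)$-invariant and bounded below on a compact set whose $H$-translates cover $\mathcal{W}$. By \cite{FMo} any two points of $\mathcal{W}$ are joined by a Teichm\"uller geodesic lying in a bounded neighborhood of $\mathcal{W}$; these geodesics are $\epsilon$-thick, so by (i) $\Upsilon$ is a uniform quasi-isometric embedding on each, and combined with coarse Lipschitzness this upgrades to $\Upsilon|_\mathcal{W}$ being a quasi-isometric embedding. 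Since $H\to\mathcal{W}$ is a quasi-isometry (\v{S}varc--Milnor, with a path metric on $\mathcal{W}$ as in \cite{FMo}) and the composition $H\to\mathcal{W}\xrightarrow{\Upsilon}\C(S)$ differs boundedly from the orbit map $h\mapsto h\alpha$ by coarse equivariance, the orbit map is a quasi-isometric embedding. Independence of $\alpha$ is automatic: two orbit maps are $\Mod(S)$-equivariant, hence at bounded distance.

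\emph{Converse.} Assume $o\colon H\to\C(S)$, $h\mapsto h\alpha$, is a quasi-isometric embedding. First, $H$ is hyperbolic: the $o$-image of a geodesic triangle in a Cayley graph of $H$ is a quasigeodesic triangle in $\C(S)$, uniformly thin by the Morse lemma and hyperbolicity of $\C(S)$, and thinness pulls back through the quasi-isometric embedding $o$. Next, $o$ extends to a continuous, injective, $H$-equivariant map $\partial H\hookrightarrow\partial\C(S)=\mathcal{EL}(S)$ (Klarreich). From this one deduces that every point of $\Lambda H\subset\mathcal{PML}(S)$ is supported on a minimal filling lamination: a non-filling accumulation point $\lambda$ of $H\cdot\mathcal{X}$ has a curve disjoint from its support, which forces the systoles of the corresponding orbit points to stay within bounded distance of a fixed set in $\C(S)$, contradicting that $o$ is a quasi-isometric embedding while $|h_n|_H\to\infty$. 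It remains to show $H$ acts cocompactly on $\mathcal{W}(\Lambda H)$, after which \cite{FMo} concludes. (Alternatively, the converse can be routed through surface-group extensions: the quasi-isometric embedding hypothesis makes the $S$-bundle ``flare'', so $E_H$ is Gromov-hyperbolic by the combination theorem of \cite{H}, whence $H$ is convex cocompact by \cite{FMo}.)

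I expect the crux to be the cocompactness statement in the converse --- upgrading the one-dimensional control supplied by $\C(S)$ to control of the non-hyperbolic Teichm\"uller geometry of the hull. The key point is ruling out long excursions of the Teichm\"uller geodesics between limit points into the thin part of $\T(S)$: this is where the filling property of the limit laminations (from Klarreich) does the work, via the Bounded Geodesic Image Theorem and a comparison of subsurface projections showing that a persistent short curve along such a geodesic is incompatible with its $\Upsilon$-image being a bi-infinite quasigeodesic whose coarse geometry is pinned down by the quasi-isometrically embedded orbit.
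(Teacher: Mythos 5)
The paper does not prove this statement; it is quoted from \cite{KL2,H} and used as a black box, so there is no internal proof to compare against. Judged on its own, your forward direction is essentially the standard argument of Kent--Leininger: cocompactness of the $H$-action on the weak hull forces the hull into a thick part $\T_{\ge\epsilon}(S)$, Minsky's thick-part estimates make the systole map a uniform quasi-isometric embedding along the relevant Teichm\"uller geodesics, and \v{S}varc--Milnor plus coarse equivariance transfers this to the orbit map. That half is correct in outline.

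The converse, however, has a genuine gap exactly where you flag it, and flagging it is not the same as closing it: the sentence ``It remains to show $H$ acts cocompactly on $\mathcal{W}(\Lambda H)$'' is the entire content of the hard direction of the theorem. Two concrete steps are missing. First, before the weak hull is even defined you must know that any two distinct points of $\Lambda H$ jointly fill $S$ (so that they are the vertical and horizontal foliations of a Teichm\"uller geodesic); you only argue that each individual limit point is a filling lamination, which does not rule out two distinct limit points whose underlying laminations coincide or fail to bind. Second, granting the hull exists, cocompactness of the $H$-action on it does not follow formally from the orbit map into $\C(S)$ being a quasi-isometric embedding: one must show the geodesics of the hull stay uniformly close to the orbit $H\cdot\mathcal{X}$ in $\T(S)$, i.e.\ rule out long thin-part excursions, and your appeal to the Bounded Geodesic Image Theorem is a heuristic, not an estimate. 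In \cite{KL2} this is carried out via a stability/contraction argument for cobounded Teichm\"uller geodesics combined with a properness argument for the limit-set map; none of that machinery appears in your sketch. The alternative route you offer --- ``the quasi-isometric embedding hypothesis makes the bundle flare, so $E_H$ is hyperbolic'' --- is not a shortcut: establishing the flaring condition from the curve-complex hypothesis is precisely Hamenst\"adt's proof in \cite{H}, so as written it assumes the result being proved.
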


Using this characterization, it is immediate that if $H$ is convex cocompact then any $h \in H$ with infinite order is pseudo-Anosov.

\subsection{A partial order on subsurfaces}
We recall the partial order on connected subsurfaces described in \cite{CLM}. The origins of this ordering can be found in \cite{MM2, BKMM}. Fix markings $\mu, \mu'$ and fix $K \ge 20$. Define $\Omega(K,\mu,\mu') = \{Y \subset S: d_Y(\mu,\mu') \ge K \}$. Recall that connected subsurfaces $X,Y$ \emph{overlap} if $X$ and $Y$ intersect essentially and one cannot be isotoped to be contained in the other. This is equivalent to the condition that both $\pi_Y(\partial X)$ and $\pi_X(\partial Y)$ are not empty. Define the relation $X \prec Y$ to mean $X$ and $Y$ overlap and 
$$d_X(\mu, \del Y) \geq 10.$$ 
The following properties of $\prec$ are verified in \cite{CLM}.

\begin{lemma}\label{surfaceordering}
Let $K \ge 20$ and choose $X,Y \in \Omega(K,\mu,\mu')$ that overlap. Then $X$ and $Y$ are ordered and the following are equivalent
\begin{multicols}{2}
\begin{enumerate}
\item $X \prec Y$
\item $d_X(\mu,\partial Y) \ge 10$
\item $d_X(\mu, \partial Y) \ge K-4$
\item $d_X(\mu',\partial Y) \le 4$
\item $d_Y(\mu',\partial X) \ge 10$
\item $d_Y(\mu', \partial X) \ge K-4$
\item $d_Y(\mu,\partial X) \le 4$
\end{enumerate}
\end{multicols}
Moreover, $\prec$ is a strict partial order on $\Omega(K, \mu, \mu')$.
\end{lemma}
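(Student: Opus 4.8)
The plan is to establish the chain of equivalences by a sequence of short implications, using two standard inputs: the Behrstock inequality (from \cite{BKMM}), which says there is a universal constant $B$ (one may take $B=10$ after enlarging, but the paper's threshold $10$ already reflects this) such that for overlapping subsurfaces $X,Y$ and any marking $\mu$, one cannot have both $d_X(\mu,\partial Y) > B$ and $d_Y(\mu,\partial X) > B$; and the triangle inequality together with $d_X(\partial Y) \le 3$ for the diameter bound on a single projection. First I would record the key consequence of the Behrstock inequality: since $X,Y \in \Omega(K,\mu,\mu')$ we have $d_X(\mu,\mu') \ge K$ and $d_Y(\mu,\mu')\ge K$, so by the triangle inequality at least one of $d_X(\mu,\partial Y), d_X(\mu',\partial Y)$ is $\ge (K-3)/2 \ge 8$ (using $K \ge 20$), and similarly for $Y$; but Behrstock forbids $d_X(\mu,\partial Y)$ and $d_Y(\mu,\partial X)$ from both being large, and likewise for $\mu'$. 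Chasing which of the four quantities $d_X(\mu,\partial Y)$, $d_X(\mu',\partial Y)$, $d_Y(\mu,\partial X)$, $d_Y(\mu',\partial X)$ can be large forces exactly one of the two ``orderings'' to hold, which gives the dichotomy ``$X$ and $Y$ are ordered.''

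Next I would prove the seven-way equivalence by showing $(1)\Leftrightarrow(2)$ is the definition, then $(2)\Rightarrow(7)$: if $d_X(\mu,\partial Y)\ge 10 > B$ then Behrstock gives $d_Y(\mu,\partial X) \le B = 10$; sharpening this to $\le 4$ uses that Behrstock's constant is actually smaller, or alternatively follows from combining $d_Y(\mu,\partial X) \le$ (small) with the triangle inequality and $d_Y(\partial X) \le 3$. From $(7)$ and $d_Y(\mu,\mu')\ge K$ the triangle inequality yields $d_Y(\mu',\partial X) \ge K - 4 - 3 \ge K-4$, which is $(6)$; and $(6)\Rightarrow(5)$ is trivial since $K - 4 \ge 16 \ge 10$. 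Symmetrically, $(5)\Rightarrow (4)$ via Behrstock applied with roles of $X,Y$ swapped and marking $\mu'$: $d_Y(\mu',\partial X)\ge 10$ forces $d_X(\mu',\partial Y)\le 4$, which is $(4)$; then $(4)$ plus $d_X(\mu,\mu')\ge K$ gives $d_X(\mu,\partial Y)\ge K-4$, which is $(3)$, and $(3)\Rightarrow(2)$ since $K-4\ge 10$. This closes the cycle $(1)\Leftrightarrow(2)\Rightarrow(7)\Rightarrow(6)\Rightarrow(5)\Rightarrow(4)\Rightarrow(3)\Rightarrow(2)$, so all seven are equivalent.

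Finally, for ``$\prec$ is a strict partial order on $\Omega(K,\mu,\mu')$'': irreflexivity is immediate since $X\prec X$ would require $X$ to overlap itself. Antisymmetry follows from the equivalence $(1)\Leftrightarrow(7)$: if $X\prec Y$ and $Y\prec X$ then $d_Y(\mu,\partial X)\le 4$ (from $X\prec Y$) while $d_Y(\mu,\partial X)\ge 10$ (from $Y\prec X$, via $(1)\Leftrightarrow(2)$ with $X,Y$ swapped), a contradiction. For transitivity, suppose $X\prec Y$ and $Y\prec Z$ with $X,Z$ overlapping (one must also handle the case $X,Z$ disjoint or nested, showing it cannot occur); I would use criterion $(4)$, $d_X(\mu',\partial Y)\le 4$, together with $d_Y(\mu',\partial Z)\le 4$, plus the fact (consequence of Behrstock / the structure of the ordering, applied to the triple, or a direct projection estimate using that $\partial Y$ misses neither $X$ nor $Z$ appropriately) that $\partial Z$ projects close to $\partial Y$ in $X$ — i.e.\ $d_X(\partial Y,\partial Z)$ is bounded — to conclude $d_X(\mu,\partial Z) \ge d_X(\mu,\partial Y) - d_X(\partial Y,\partial Z) \ge (K-4) - (\text{bdd}) \ge 10$, giving $X\prec Z$. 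The main obstacle is precisely this transitivity step: one must rule out the degenerate configurations for the triple $\{X,Y,Z\}$ and control $d_X(\partial Y,\partial Z)$ uniformly, which is where the hypothesis that all three lie in $\Omega(K,\mu,\mu')$ with $K$ large is essential; since the excerpt attributes these properties to \cite{CLM}, I would cite the corresponding estimates there rather than reprove the Behrstock-type inequalities from scratch.
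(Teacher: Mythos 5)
The first thing to note is that the paper does not prove this lemma at all: it states it as a recollection and cites \cite{CLM} for the verification, so there is no in-paper argument to compare against. Your strategy --- the sharp form of Behrstock's inequality (for overlapping $X,Y$ and any marking or curve $\nu$ cutting both, $d_X(\nu,\partial Y)\ge 10$ forces $d_Y(\nu,\partial X)\le 4$) combined with the triangle inequality $d_X(\mu,\mu')\le d_X(\mu,\partial Y)+d_X(\mu',\partial Y)$ --- is exactly the standard one, and your cycle $(1)\Leftrightarrow(2)\Rightarrow(7)\Rightarrow(6)\Rightarrow(5)\Rightarrow(4)\Rightarrow(3)\Rightarrow(2)$ is correct. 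Two small arithmetic points: for ``$X$ and $Y$ are ordered'' you want that at least one of $d_X(\mu,\partial Y)$, $d_X(\mu',\partial Y)$ is $\ge K/2\ge 10$ (your $(K-3)/2\ge 8$ is too weak to trigger the threshold $10$, and the $-3$ is unnecessary with $d_X$ defined as a diameter); and in $(7)\Rightarrow(6)$ the triangle inequality gives $d_Y(\mu',\partial X)\ge K-4$ directly, whereas your intermediate ``$K-4-3\ge K-4$'' is false as written. Neither affects the structure.

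The genuine gap is transitivity, which you correctly identify as the main obstacle but then defer to \cite{CLM}; a blind proof needs to close it, and it can be closed with tools already at hand. Suppose $X\prec Y\prec Z$. (i) You must first show $X\pitchfork Z$, since otherwise $X\prec Z$ is not even a candidate. This follows from the argument of Lemma \ref{orderedtriple}, which uses only the seven equivalences (items $(7)$ and $(4)$), so there is no circularity: if $X$ and $Z$ were disjoint, or $X\subset Z$, then a component of $\partial Z$ would miss both $X$ and $Z$, hence would miss $Y$, contradicting $Y\pitchfork Z$; if $Z\subset X$, run the same argument with $\partial X$ to contradict $X\pitchfork Y$. (ii) You must bound $d_X(\partial Y,\partial Z)$. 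This is again Behrstock, applied to the overlapping pair $(X,Y)$ with the curve $\partial Z$ in place of a marking ($\partial Z$ cuts $Y$ because $Y\pitchfork Z$ and cuts $X$ by (i)): item $(3)$ for $Y\prec Z$ and item $(7)$ for $X\prec Y$ give $d_Y(\partial X,\partial Z)\ge d_Y(\mu,\partial Z)-d_Y(\mu,\partial X)\ge (K-4)-4\ge 10$, so Behrstock yields $d_X(\partial Y,\partial Z)\le 4$, whence $d_X(\mu,\partial Z)\ge d_X(\mu,\partial Y)-d_X(\partial Y,\partial Z)\ge (K-4)-4\ge 10$ and $X\prec Z$. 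With (i) and (ii) supplied your outline is complete; without them it is not a proof.
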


The following observation will be used in combination with the Bounded Geodesic Image Theorem (Theorem \ref{BGIT}) to show when large subsurface projections combine to build up distance in the curve complex.

\begin{lemma}\label{orderedtriple}
If $K \geq 20$, $X,Y,Z \in \Omega(K,\mu,\mu')$, and $X \prec Y \prec Z$, then any curve disjoint from both $X$ and $Z$ is also disjoint from $Y$.
\end{lemma}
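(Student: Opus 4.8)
The plan is to argue by contradiction. Suppose $\gamma$ is a curve disjoint from both $X$ and $Z$, but that $\gamma$ \emph{cuts} $Y$, i.e.\ $\pi_Y(\gamma) \neq \emptyset$. I will derive incompatible lower and upper bounds on the subsurface projection distance $d_Y(\partial X, \partial Z)$ in $\C(Y)$, the hypothesis $K \ge 20$ providing the slack between them.

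For the lower bound I appeal to Lemma \ref{surfaceordering}. Since $X \prec Y$, its condition (7) gives $d_Y(\mu, \partial X) \le 4$. Since $Y$ and $Z$ lie in $\Omega(K,\mu,\mu')$ and overlap (overlapping being part of the definition of $\prec$), I may apply Lemma \ref{surfaceordering} to the pair $Y \prec Z$ as well, and its condition (3) gives $d_Y(\mu, \partial Z) \ge K - 4$. These projections are all nonempty --- $\pi_Y(\partial X)$ and $\pi_Y(\partial Z)$ because $X \pitchfork Y$ and $Y \pitchfork Z$, and $\pi_Y(\mu)$ because the base of $\mu$ fills $S$ --- so the triangle inequality for diameters of projections applies and yields
\[ d_Y(\partial X, \partial Z) \;\ge\; d_Y(\mu, \partial Z) - d_Y(\mu, \partial X) \;\ge\; (K-4)-4 \;=\; K-8 \;\ge\; 12. \]

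For the upper bound I use that $\gamma$ can be isotoped into the complement of both $X$ and $Z$, hence is disjoint from every component of the multicurves $\partial X$ and $\partial Z$. Fix any $w \in \pi_Y(\gamma)$. Every point of $\pi_Y(\partial X) \cup \pi_Y(\partial Z)$ lies in $\pi_Y(\delta)$ for some boundary curve $\delta$ of $X$ or of $Z$; since that projection is nonempty, $\delta$ cuts $Y$, and $\delta$ is disjoint from $\gamma$. By the standard fact that two disjoint curves that both cut $Y$ have $Y$--projections lying within a universal distance $C_0$ of one another (see \cite{MM2}), every such point is within $C_0$ of $w$, so $d_Y(\partial X, \partial Z) = \mathrm{diam}_Y(\pi_Y(\partial X) \cup \pi_Y(\partial Z)) \le 2C_0 \le 8$. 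This contradicts the lower bound, proving the lemma. I do not expect a real obstacle here; the points to watch are the bookkeeping with the many equivalent conditions of Lemma \ref{surfaceordering} and the verification that $2C_0$ is strictly less than $K-8 \ge 12$, which is exactly where $K \ge 20$ is used (any of the usual normalizations of $C_0$, which is at most $4$, works comfortably).
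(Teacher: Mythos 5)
Your proposal is correct and is essentially the paper's argument rearranged: both proofs contradict the same tension, namely that the ordered triple forces $\pi_Y(\partial X)$ and $\pi_Y(\partial Z)$ to be far apart (via the estimates of Lemma \ref{surfaceordering}) while disjointness from $\gamma$ forces them to be coarsely equal. The paper simply packages this as a single chain $d_Y(\mu,\mu') \le d_Y(\mu,\partial X) + d_Y(\partial X,\gamma) + d_Y(\gamma,\partial Z) + d_Y(\partial Z,\mu') \le 12 < K$, using the bound $2$ for projections of disjoint curves where you conservatively allow $4$; either normalization closes the gap.
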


\begin{proof}
Suppose $\gamma$ is disjoint from both $X$ and $Z$ but it intersects $Y$.  Because both $X$ and $Z$ are ordered with $Y$, they overlap with $Y$. So we have that $d_Y(\del X,\gamma), d_Y(\gamma, \del Z) \le 2$. Then
\begin{eqnarray*}
K \le d_Y(\mu, \mu') &\leq& d_Y(\mu, \del X) + d_Y(\del X,\gamma) + d_Y(\gamma, \del Z) + d_Y(\del Z, \mu')\\
 &\leq& 4 + 2 + 2 + 4 = 12
\end{eqnarray*}
using Lemma \ref{surfaceordering}. This contradicts $K \geq 20$.
\end{proof}

\section{Right-angled Artin groups}

\subsection{RAAGs and normal forms}\label{normalforms}
Let $\Gamma$ be a simplicial graph with vertex set $V(\Gamma) = \{v_1, \ldots, v_n \}$ and edge set $E(\Gamma) \subset V(\Gamma) \times V(\Gamma)$. The right-angled Artin group, $A(\Gamma)$, associated to $\Gamma$ is the group presented by
$$\langle v_i \in V(\Gamma): [v_i,v_j] =1 \iff (v_i,v_j) \in E(\Gamma) \rangle .$$
The $v_i$ will be referred to as the \emph{standard generators} of $A(\Gamma)$. Note that if $\Gamma$ is the graph with $n$ vertices and no edges then $A(\Gamma) = \F_n$. At the other extreme, if $\Gamma$ is the complete graph of $n$ vertices then $A(\Gamma) = \mathbb{Z}^n$. Because of this, $A(\Gamma)$ is often said to ``interpolate'' between free and free abelian groups. Although they are simple to define, right-angled Artin groups have been at the center of major recent developments in geometric group theory and low-dimensional topology.

In this section, we briefly recall a normal form for elements of a right-angled Artin group. For details see Section $4$ of \cite{CLM} and the references provided there. Fix a word $w = x_1^{e_1} \ldots x_k^{e_k}$  in the vertex generators of $A(\Gamma)$, with $x_i \in \{v_1, \ldots, v_n \}$ for each $i=1, \ldots, k$ . Each $x_i^{e_i}$ together with its index, which serves to distinguish between duplicate occurrences of the same generator, is a \emph{syllable} of the word $w$. Let $\syl(w)$ denote the set of syllables for the word $w$. We consider the following $3$ moves that can be applied to $w$ without altering the element in $A(\Gamma)$ it represents:
\begin{enumerate}
\item If $e_i = 0$, then remove the syllable $x_i^{e_i}$.
\item If $x_i = x_{i+1}$ as vertex generators, then replace $x_i^{e_i}x_{i+1}^{e_{i+1}}$ with $x_i^{e_i+e_{i+1}}$.
\item If the vertex generators $x_i$ and $x_{i+1}$ commute, then replace $x_i^{e_i}x_{i+1}^{e_{i+1}}$ with $x_{i+1}^{e_{i+1}}x_i^{e_i}$.
\end{enumerate}

For $\sigma \in A(\Gamma)$, set Min$(\sigma)$ equal to the set of words in the standard generators of $A(\Gamma)$ that have the fewest syllables among words representing $\sigma$. We refers to words in Min$(\sigma)$ as \emph{normal} representatives of $\sigma$. We also refer to a word in the standard generators as \emph{normal} if it is normal for the element of $A(\Gamma)$ that it represents. Hermiller and Meiler showed in \cite{HerM} that any word representing $\sigma$ can be brought to any word in Min$(\sigma)$ by application of the three moves above. Since these moves increase neither word nor syllable length, we see that words in Min$(\sigma)$ are also minimal length with respect to the standard generators, and that any two words in Min$(\sigma)$ differ by repeated application of move $(3)$ only. For normal words $w,w'$, we will occasionally use the notation $w \sim w'$ to denote that $w$ and $w'$ differ by a repeated application of move $(3)$. In other words, $w \sim w'$ if and only if there is a $\sigma \in A(\Gamma)$ with $w,w' \in \mathrm{Min}(\sigma)$. It is verified in \cite{CLM} that for any $\sigma \in A(\Gamma)$ and $w,w' \in \text{Min}(\sigma)$ there is a natural bijection between $\syl(w)$ and $\syl(w')$, which extends the obvious bijection between normal form words differing by a single application of move $(3)$.  Thus we define, for $\sigma \in A(\Gamma)$, $\syl(\sigma) := \syl(w)$ using any $w \in \text{Min}(\sigma)$. This permits us to define, for each $\sigma \in A(\Gamma)$, a strict partial order $\prec$ on the set $\syl(\sigma)$ by setting $x_i^{e_i} \prec x_j^{e_j}$ if and only if for every $w \in \text{Min}(\sigma)$ the syllable $x_i^{e_i}$ appears to the left of $x_j^{e_j}$ in the spelling of $w$.

One can imagine that the generators and their inverses correspond to directions in the Cayley graph.  For example, the standard Cayley graphs for either $F_2 = \grp{a,b}$ or $\Z^2 = \grp{a,b|ab=ba}$ have four directions; typically $a$ and $a^{-1}$ correspond to east and west respectively, while $b$ and $b^{-1}$ point north and south.  A word in normal form represents a geodesic path that also minimizes changes in direction.

\begin{example}
Take $\Gamma$ to be the graph with vertex set $\{a,b,c,d\}$ and edge set $\{(a,b),(b,c),(c,d)\}$ and consider the word $w = a c b  d$. We see that $w$ is in normal form and is equivalent through move $(3)$ to the words $a b c  d$, $b a c  d$, $a b d  c$, and $b a d  c$. Hence, the only ordered syllables of $w$ are $a \prec c$, $a\prec d$, and $b \prec d$.
\end{example}

Our first lemma states immediate properties of our partial order and normal forms.  We use this lemma without mention, in particular to prove a second, more technical lemma critical to our proof of the main theorem.  

\begin{lemma}\label{subwordsobvious}Subwords of normal words are themselves normal words.  Unordered syllables of a normal word correspond to generators that commute.
\end{lemma}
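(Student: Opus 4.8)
The plan is to prove each of the two assertions of Lemma \ref{subwordsobvious} by unwinding the definitions of normal form (i.e.\ membership in $\mathrm{Min}(\sigma)$) and of the partial order $\prec$ on syllables, using only the characterization of $\mathrm{Min}(\sigma)$ via the three moves established in \cite{HerM}.

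For the first statement, let $w = x_1^{e_1}\cdots x_k^{e_k}$ be a normal word and let $u$ be a subword, say $u = x_i^{e_i} x_{i+1}^{e_{i+1}} \cdots x_j^{e_j}$ for some $1 \le i \le j \le k$. I would argue by contradiction: if $u$ were not normal, then some sequence of the three moves would reduce its syllable length, which means (after possibly applying move $(3)$'s to bring the offending pair adjacent) that $u$ contains two syllables $x_p^{e_p}$ and $x_q^{e_q}$ with $p < q$, $i \le p < q \le j$, such that $x_p = x_q$ as vertex generators and $x_p$ commutes with every vertex generator appearing strictly between positions $p$ and $q$ in $u$. But those intermediate positions in $u$ are exactly a subset of the intermediate positions in $w$, so the same commutations are available in $w$; hence in $w$ the syllables $x_p^{e_p}$ and $x_q^{e_q}$ can be brought adjacent and merged, contradicting $w \in \mathrm{Min}(\sigma)$. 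The one point requiring slight care is the standard fact (from \cite{HerM}, or the discussion in \cite{CLM} recalled above) that a word fails to be normal precisely when such a ``cancelling pair separated only by commuting letters'' exists; I would cite this rather than reprove it.

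For the second statement, suppose $x_i^{e_i}$ and $x_j^{e_j}$ are two syllables of the normal word $w$ that are \emph{unordered} under $\prec$, i.e.\ neither $x_i^{e_i} \prec x_j^{e_j}$ nor $x_j^{e_j} \prec x_i^{e_i}$. By definition of $\prec$ this means there is some $w' \in \mathrm{Min}(w)$ in which $x_i^{e_i}$ precedes $x_j^{e_j}$ and some $w'' \in \mathrm{Min}(w)$ in which $x_j^{e_j}$ precedes $x_i^{e_i}$. Since all normal representatives differ by repeated applications of move $(3)$ only, there is a finite chain $w' = w_0 \sim w_1 \sim \cdots \sim w_m = w''$ where consecutive words differ by a single move $(3)$, i.e.\ by swapping one adjacent pair of syllables with commuting generators. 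Along this chain the relative order of the two distinguished syllables switches at some step; at that step the two syllables must be adjacent and their generators must commute (that is exactly what a single move $(3)$ permits), so $x_i = x_j$'s generators commute in $\Gamma$. (If $x_i$ and $x_j$ were the \emph{same} generator they could never be unordered, since move $(3)$ cannot swap two equal adjacent generators without first merging them, contradicting normality — so automatically $x_i \ne x_j$ and we genuinely get an edge of $\Gamma$.)

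The only real obstacle is making the combinatorics of ``a non-normal word contains a cancelling pair separated by commuting letters'' precise enough to invoke cleanly; this is exactly the content of the Hermiller--Meier analysis cited in the paper, so in practice both parts reduce to careful bookkeeping with the three moves rather than to any new idea. I would keep the argument short, leaning on the already-recalled facts that (a) any word representing $\sigma$ reaches any element of $\mathrm{Min}(\sigma)$ by the three moves, (b) the three moves never increase syllable length, and (c) any two elements of $\mathrm{Min}(\sigma)$ differ by move $(3)$'s alone.
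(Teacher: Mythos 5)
Your proof is correct; the paper states Lemma \ref{subwordsobvious} without proof, treating it as immediate, and your verification is exactly the standard bookkeeping with the three moves that the paper implicitly has in mind. Both halves are sound: the contiguity of the subword is what makes the "cancelling pair separated by commuting letters" transfer from $u$ to $w$, and the observation that a single move $(3)$ can only flip the relative order of the two syllables it swaps is precisely what forces unordered syllables to have commuting (and distinct) generators.
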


\begin{lemma} \label{subwords}
Suppose $w$ is a normal word containing distinct, unordered syllables $p,q \in \syl(w)$ separated by the subword $M$.  That is, up to switching $p$ and $q$,
\[w = x_1^{e_1}\cdots x_k^{e^k} \qquad p=x_i^{e_i} \qquad q=x_j^{e_j} \qquad M = x_{i+1}^{e_{i+1}}\cdots x_{j-1}^{e_{j-1}},\]
so $pMq$ is the smallest subword of $w$ containing $p$ and $q$.  Then $M$ has normal representative $M' = LR$ where $L$ and $R$ are (possibly empty) subwords commuting with $p$ and $q$ respectively.
\end{lemma}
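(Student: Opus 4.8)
The plan is to argue by induction on the length of $M$, peeling syllables off either end and sorting them into the pieces $L$ and $R$ according to which of $p$ or $q$ they commute with. The base case $M$ empty is immediate with $L = R = $ empty. For the inductive step, consider the leftmost syllable $x_{i+1}^{e_{i+1}}$ of $M$. Since $p$ and $q$ are unordered in $w$ (hence, by Lemma \ref{subwordsobvious}, the generators $x_i$ and $x_j$ commute), and $pMq$ is by hypothesis the \emph{smallest} subword of $w$ containing both $p$ and $q$, I first want to establish the following dichotomy: the syllable $x_{i+1}^{e_{i+1}}$ either commutes with $p$, or it commutes with $q$ (possibly both). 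Granting this, if it commutes with $q$ I can use move $(3)$ repeatedly to slide $x_{i+1}^{e_{i+1}}$ rightward past all of $M' \setminus \{x_{i+1}^{e_{i+1}}\}$ and into the front of $R$ — but for this I also need that it commutes with every syllable strictly between it and $q$; alternatively, and more cleanly, if it commutes with $p$, I simply assign it to $L$ and apply the inductive hypothesis to the shorter subword $x_{i+2}^{e_{i+2}}\cdots x_{j-1}^{e_{j-1}}$ between $p$ and $q$. So the real content is: \emph{after possibly reordering $M$ by move $(3)$, its first syllable commutes with $p$ or its last syllable commutes with $q$}; then recurse on what remains.

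To prove that dichotomy, I would use the partial order $\prec$ on $\syl(w)$ together with Lemma \ref{subwordsobvious}. Suppose the first syllable $s = x_{i+1}^{e_{i+1}}$ of $M$ does \emph{not} commute with $p = x_i^{e_i}$. Then $p$ and $s$ are ordered syllables of the normal word $w$, and since $p$ appears to the left of $s$ in $w$ we get $p \prec s$. I claim that then $s$ must commute with $q$. Indeed, if $s$ did not commute with $q = x_j^{e_j}$, then $s \prec q$ (as $s$ is to the left of $q$ in $w$). Combining $p \prec s \prec q$ would force $p \prec q$ by transitivity of the partial order — contradicting the assumption that $p$ and $q$ are unordered. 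Hence $s$ commutes with $q$. A symmetric argument applies to the last syllable of $M$ and $q$: if it fails to commute with $q$, it must commute with $p$. This gives, in every case, a syllable at one end of $M$ that can be moved (via move $(3)$, since within a normal word unordered consecutive syllables commute and can be transposed, and more generally a syllable commuting with $p$ can be carried leftward to abut $p$, resp. one commuting with $q$ carried rightward to abut $q$) to the appropriate end; we record it in $L$ or $R$ accordingly and shorten $M$.

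The step I expect to be the main obstacle is justifying the \emph{sliding} of an end syllable all the way through the rest of $M$ to its destined side — e.g., showing that a first syllable $s$ of $M$ with $s \prec q$ but $s$ not commuting with $p$ actually commutes with \emph{every} syllable of $M$ lying strictly between $s$ and $q$, so that it legitimately joins $R$. This should follow again from the partial order: any syllable $t$ strictly between $s$ and $q$ in $w$ satisfies, if it is ordered with $s$, that $s \prec t$; and if $t$ is ordered with $q$ then (since $t$ is to the left of $q$) $t \prec q$. One must rule out $s \prec t$ unless $t$ commutes with $q$ — but if $s \prec t$ and $t \prec q$ then $s \prec q$, which is consistent, so this alone is not a contradiction; instead I will handle it by a secondary induction, repeatedly extracting the outermost extractable syllable, so that at each stage I only ever transpose a syllable past syllables it provably commutes with. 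Carefully bookkeeping this nested induction — rather than trying to relocate a single syllable across all of $M$ in one move — is where the argument needs to be written out with care, but no genuinely new idea beyond transitivity of $\prec$ and Lemma \ref{subwordsobvious} is required.
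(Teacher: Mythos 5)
Your opening moves are correct and coincide with the paper's: the observation that each syllable of $M$ commutes with at least one of $p$ and $q$ (equivalently, is ordered with at most one of them) is exactly the paper's first step, proved the same way via transitivity of $\prec$, and the case where the leftmost syllable of $M$ commutes with $p$ (assign it to $L$, slide it past $p$, recurse on the shorter middle word) is fine. The gap is in the step you yourself flag as ``the main obstacle,'' which you do not resolve. When the leftmost syllable $s$ of $M$ fails to commute with $p$ (so $p \prec s$ and $s$ commutes with $q$), $s$ must end up in $R$, but it cannot in general be slid to the right end of $M$: there may be a later syllable $t$ with $s \prec t$, and such a $t$ is itself forced (by transitivity, since $p \prec s \prec t$ rules out $t \prec q$) to commute with $q$ and hence to land in $R$ \emph{after} $s$. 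So the correct destination of $s$ is the interior of $R$, not either end of the word, and ``repeatedly extracting the outermost extractable syllable'' does not obviously produce this: you neither define ``extractable,'' nor prove such a syllable always exists at each stage, nor verify the process terminates in a word of the form $LR$. Your diagnosis of the difficulty is also slightly off: the configuration $s \prec t \prec q$ that you call ``consistent'' is in fact impossible here, because $p \prec s$ makes it yield $p \prec q$; the genuine obstruction is $s \prec t$ with $t$ \emph{unordered} with $q$.

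The paper's inductive step resolves exactly this point, and it is the idea your sketch is missing. Writing $M = L_1 s M_1$ where $s$ is the first syllable of $M$ ordered with $p$ and $L_1$ (all of whose syllables are unordered with $p$, hence commute with $p$) is the prefix before it, one applies the inductive hypothesis to $M_1$ with respect to the \emph{new} unordered pair $(s,q)$, obtaining $M_1 \sim L_2 R_2$ with $L_2$ commuting with $s$ and $R_2$ with $q$. Now $s$ only needs to slide past $L_2$, which it can, giving $M \sim L_1 L_2 s R_2$; a second application of the inductive hypothesis, to $L_2$ with respect to the original pair $(p,q)$, splits $L_2 \sim L_3 R_3$, and one takes $L = L_1 L_3$ and $R = R_3 s R_2$ --- note that $s$ sits in the middle of $R$. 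To salvage your peeling strategy you would have to prove an existence lemma for extractable syllables and track where each lands, which is essentially reproving the lemma; the double application of the inductive hypothesis to two different unordered pairs is the missing ingredient.
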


\begin{proof}We induct on the syllable length of $M$.  The claim is vacuous if $M$ is the empty word.  If $M$ non-empty, observe that each syllable of $M$ is ordered with at most one of $p$ and $q$, since $p$ and $q$ are not ordered.  Find the first syllable in $M$ from the left which is ordered with $p$.  If no such syllable exists then the claim is true for $M=L$ and $R$ empty.  Otherwise, call this syllable $s$ and observe that $s$ and $q$ are not ordered and therefore commute.  Furthermore, $pMq = pL_1sM_1q$ where $L_1$ commutes with $p$ by construction.  Inductively, $M_1$ has normal representative $L_2R_2$ where $L_2$ commutes with $s$ and $R_2$ commutes with $q$.  Thus $pMq$ has normal representative $L_1pL_2qsR_2$ got by a repeated application of move $(3)$.  Inductively, $L_2=L_3R_3$ where $p$ commutes with $L_3$ and $q$ commutes with $R_3$.  We have shown that the word $M$ has normal representatives
\[M \sim L_1sM_1 \sim L_1sL_2R_2 \sim L_1L_2sR_2 \sim L_1L_3 \cdot R_3sR_2\]
 where $L=L_1L_3$ commutes with $p$ and $R=R_3sR_2$ commutes with $q$.\end{proof}

\subsection{Cubical geometry}\label{cubegeo}
In this section we briefly review some geometry of non-positively curved cube complexes, which we use throughout the paper. Good references for this material are \cite {BH, HW1, Hag}. First, recall that a simplicial complex is a \emph{flag complex} if any $n+1$ pairwise adjacent vertices span an $n$-simplex. A subcomplex $X$ of a simplicial complex $Y$ is a \emph{full subcomplex} when any simplex in $Y$ whose vertices are in $X$ is contained in $X$. 

\begin{definition}
A \emph{cube complex} $X$ is the space formed by isometrically gluing Euclidean unit cubes along their faces. $X$ is \emph{non-positivley curved (NPC)} if the link of each vertex is a flag simplicial complex. $X$ is \emph{CAT$(0)$} if it is \emph{NPC} and simply connected.
\end{definition}

Because of their combinatorial nature, local isometries between NPC cube complexes have a particularly simple description; see \cite {Char, BH, Hag} for details. A map between cube complexes is said to be \emph{cubical} if it maps open cubes homeomorphically onto open cubes. Denote by $\lk(x)$ the link of the vertex $x$ of $X$.

\begin{theorem}\label{localiso}
Let $X, Y$ be cube complexes, $Y$ \emph{NPC}, and $f: X \to Y$ a cubical map such that for each $x \in X^0$ the induced map on the link of $x$ is injective and $f(\lk(x))$ is a full subcomplex of $\lk(f(x))$ in $Y$. Then $X$ is \emph{NPC}, and $f$ is a local isometry. Further, $f_*: \pi_1(X) \to \pi_1(Y)$ is injective and the induced map on the universal covers $\tilda{f}: \tilda{X} \to \tilda{Y}$ is an isometric embedding with $\tilda{f}(\tilda{X}) \subset \tilda{Y}$ a convex subcomplex.
\end{theorem}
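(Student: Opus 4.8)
The plan is to assemble the statement from three standard ingredients: Gromov's link condition, the combinatorial characterization of local isometries between NPC cube complexes (which is the point of citing \cite{Char, BH, Hag}), and the general principle that a local isometry from a connected complete space into a complete NPC space is $\pi_1$-injective and lifts to an isometric embedding of universal covers. Throughout I take $X$ connected (otherwise work componentwise).

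First I would verify that $X$ is NPC. Fix a vertex $x \in X^0$. Since $Y$ is NPC, $\lk(f(x))$ is a flag complex; a full subcomplex of a flag complex is again flag, and by hypothesis the simplicial map $\lk(x) \to \lk(f(x))$ is injective onto such a full subcomplex, so $\lk(x)$ is flag as well. As this holds at every vertex, $X$ is NPC. Its universal cover $\tilda{X}$ is then NPC and simply connected, so by Gromov's link condition $\tilda{X}$ is CAT$(0)$; in particular it is a complete geodesic space with unique geodesics.

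Next, the hypotheses on $f$ — that it is cubical and that at each vertex the induced link map is injective with full image — are exactly the conditions characterizing a combinatorial local isometry of NPC cube complexes in \cite{Char, BH, Hag}: they prevent distinct cubes at $x$ from being folded together and ensure that a small ball about $x$ is carried isometrically onto a ball about $f(x)$. Hence $f$ is a local isometry. Since $Y$ is NPC and, being a cube complex, complete, the standard result on local isometries into complete NPC spaces \cite{BH} applies: $f_* : \pi_1(X) \to \pi_1(Y)$ is injective and the induced map on universal covers $\tilda{f} : \tilda{X} \to \tilda{Y}$ is an isometric embedding. Finally, $\tilda{f}$ is cubical, being a lift of the cubical map $f$, so $\tilda{f}(\tilda{X})$ is a subcomplex of $\tilda{Y}$; and if $a = \tilda{f}(a')$ and $b = \tilda{f}(b')$ lie in the image, then $\tilda{f}$ carries the geodesic $[a',b'] \subset \tilda{X}$ to a path from $a$ to $b$ of the same length, which is therefore a geodesic of the CAT$(0)$ space $\tilda{Y}$ and hence equals $[a,b]$ by uniqueness of geodesics. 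Thus $[a,b] \subset \tilda{f}(\tilda{X})$, so the image is convex.

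The only steps needing genuine (if standard) care are the translation between the metric notion of local isometry and the combinatorial link condition in the hypothesis — precisely the content of the cited references — together with the completeness of $\tilda{X}$ and $\tilda{Y}$ and uniqueness of geodesics in $\tilda{Y}$, which are what make the quoted embedding theorem and the convexity argument go through. Everything else is assembly.
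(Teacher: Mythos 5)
Your proposal is correct, and it follows the standard route: the paper itself gives no proof of Theorem~\ref{localiso}, quoting it directly from \cite{Char, BH, Hag}, and your assembly (flagness of links pulled back through an injective map onto a full subcomplex, the combinatorial criterion for local isometries of NPC cube complexes, the lifting theorem for local isometries into complete NPC spaces, and uniqueness of CAT$(0)$ geodesics to get convexity of the image) is exactly the argument those references supply. The one step worth spelling out a little more is that injectivity of $\lk(x)\to\lk(f(x))$ makes $\lk(x)\to f(\lk(x))$ a simplicial isomorphism, so that pairwise adjacent vertices of $\lk(x)$ spanning a simplex downstairs (by flagness and fullness) indeed span one upstairs; with that noted, everything goes through.
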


Given a simplical graph $\Gamma$ with associated right-angled Artin group $A(\Gamma)$, we recall the definition of the so-called \emph{Salvetti complex} $S_{\Gamma}$. This is the cube complex defined as follows: begin with a rose, denoted $S^1_{\Gamma}$, with vertex $x$ and $|V(\Gamma)|$ petals oriented and labeled by the vertices of $\Gamma$. Now attach $2$-cubes for each edge of $\Gamma$ corresponding to a commutation relator in $A(\Gamma)$. Specificaly, if $u$ and $w$ are vertices of $\Gamma$ joined by an edge, then a square is attached to $S^1_{\Gamma}$ with boundary label $uwu^{-1}w^{-1}$.  For $n > 2$, we attach an $n$-cube for each set of $n$ pairwise commuting generators (for each $n$-clique of $\Gamma$) whose attaching map restricted to each face is the characteristic map for that face. It is easy verified that $S_{\Gamma}$ is an NPC cube complex with fundamental group $A(\Gamma)$, see \cite{BH} for example. We denote its universal cover $\tilda{S}_{\Gamma}$ and fix a lift $\tilda{x}$ of the unique vertex $x$ of $S_{\Gamma}$. By the construction, it follows that ($\tilda{S}^1_{\Gamma}, \tilda{x})$ is isomorphic to the Cayley graph for $A(\Gamma)$ with its standard generating set. In this paper, our cube complexes arise as compact locally convex subcomplexes of covers of the Salvetti complex. These are known as compact \emph{special} cube complexes in the literature. 

Besides the induced path metric on a CAT$(0)$ cube complex $X$, we will also be interested in the graph metric on the $1$-skeleton of $X$. We refer to this metric as \emph{combinatorial distance} on $X^0$ and observe that this distance can be alternatively characterized as the number of hyperplanes separating two vertices or the number of hyperplanes intersected by the CAT$(0)$ geodesic between the vertices. Geodesics in this metric on $X^1$ are called \emph{combinatorial geodesics}. For more on combinatorial distance see \cite{Hag}, or the appendix of \cite{HW1}. In the special case of $\tilda{S}_{\Gamma}$, we note that combinatorial distance agrees with distance in $A(\Gamma)$ with its standard generating set via the identification $A(\Gamma) \to \tilda{S}_{\Gamma}$ given by $g \mapsto g \cdot \tilda{x}$.

Finally, recall that a collection of vertices $Y \subset X^0$ for $X$ a CAT$(0)$ cube complex is called \emph{combinatorially $K$-quasiconvex} in $X$ if \emph{every} combinatorial geodesic between points of $Y$ stays within combinatorial distance $K$ from $Y$. Again translating to the group $A(\Gamma)$, quasiconvexity of the subgroup $H < A(\Gamma)$ with respect to the standard generators is equivalent to combinatorial quasiconvexity of $H\cdot\tilda{x}$ in $\tilda{S}_{\Gamma}$.

\subsection{Quasiconvex subgroups of RAAGs} \label{quasi}
We will make use of the following theorem of Haglund. Here, the \emph{convex hull} of a subcomplex $Y$ of the CAT$(0)$ cube complex $X$ is the intersection of all convex subcomplexes containing $Y$.

\begin{theorem}\cite{Hag} \label{Haglund}
Let $X$ be a uniformly locally finite $\mathrm{CAT}(0)$ cube complex. Then for any $K\ge0$ there exists an $L \ge 0$ such that the convex hull of any combinatorially $K$-quasiconvex subcomplex $Y$ is contained in the $L$-neighborhood of $Y$.
\end{theorem}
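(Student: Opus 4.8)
The plan is to realize the convex hull of $Y$ as an iterated ``interval closure'' of its vertex set, to bound how far each round of this closure can spread away from $Y$ by a fellow-traveling estimate for intervals, and to bound the number of rounds using the fact that uniform local finiteness forces $X$ to be finite dimensional. First I would pass to the $0$-skeleton, where $d(\cdot,\cdot)$ denotes combinatorial distance. Uniform local finiteness gives a uniform bound $V$ on the valence of vertices, so $D := \dim X \le V < \infty$. For vertices $p,q$ let $I(p,q) = \{x \in X^0 : d(p,x) + d(x,q) = d(p,q)\}$ be the combinatorial interval --- equivalently the union of all combinatorial geodesics from $p$ to $q$, equivalently the intersection of all halfspaces containing $p$ and $q$ --- and for $Z \subset X^0$ set $J(Z) = \bigcup_{p,q \in Z} I(p,q)$. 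A standard fact about median graphs is that a subset of $X^0$ is the vertex set of a convex subcomplex precisely when it is closed under $J$; hence the vertex set of the convex hull of $Y$ is $\mathcal{H} := \bigcup_{n \ge 0} J^n(Y^0)$. Since every point of a cube lies within $D$ of a vertex of that cube, it suffices to produce $L_0$ with $\mathcal{H} \subset N_{L_0}(Y^0)$ and then take $L = L_0 + D$.

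The key local estimate is a fellow-traveling lemma: if $p,q,p',q' \in X^0$ satisfy $d(p,p') \le r$ and $d(q,q') \le r$, then $I(p,q) \subset N_{2r}(I(p',q'))$. To see it, fix $x \in I(p,q)$ and a hyperplane $W$ separating $x$ from all of $I(p',q')$. If $W$ separates $p$ from $x$, then --- since $x$ lies on a combinatorial geodesic from $p$ to $q$ --- $W$ separates $p$ from $q$, so $q$ lies on the far side of $W$ from $p$ while $q' \in I(p',q')$ lies on $p$'s side, and hence $W$ separates $q$ from $q'$. If instead $W$ does not separate $p$ from $x$, then $p$ lies on $x$'s side of $W$ while $p' \in I(p',q')$ lies on the other side, so $W$ separates $p$ from $p'$. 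Since $I(p',q')$ is convex, $d(x, I(p',q'))$ equals the number of hyperplanes separating $x$ from $I(p',q')$, which by the above is at most $d(p,p') + d(q,q') \le 2r$. Applying this together with $K$-quasiconvexity of $Y$: given $p,q \in N_r(Y^0)$, choose $p',q' \in Y^0$ within distance $r$, so that $I(p,q) \subset N_{2r}(I(p',q')) \subset N_{2r+K}(Y^0)$, the last inclusion because $I(p',q')$ is a union of combinatorial geodesics between points of $Y$. Consequently $J^n(Y^0) \subset N_{r_n}(Y^0)$ where $r_0 = 0$ and $r_{n+1} = 2r_n + K$, i.e.\ $r_n = (2^n - 1)K$.

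It remains to bound the number of rounds of $J$ needed to reach $\mathcal{H}$. Here I would invoke the convexity theory of finite-rank median algebras applied to $X^0$: the Carath\'{e}odory number of a rank-$D$ median algebra is bounded by a function $c(D)$, so every point of $\mathcal{H}$ lies in the convex hull of some $F \subset Y^0$ with $|F| \le c(D)$; and the convex hull of $m$ points is obtained after $\lceil \log_2 m \rceil$ applications of $J$, since one may split the points into two halves, take hulls of the halves recursively, and use that the join $J(C_1 \cup C_2)$ of two convex sets is convex. By monotonicity of $J$ this gives $\mathcal{H} = J^N(Y^0)$ with $N = \lceil \log_2 c(D) \rceil$, hence $\mathcal{H} \subset N_{r_N}(Y^0)$ with $r_N = (2^N - 1)K \le (2c(D) - 1)K$. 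Taking $L_0 = (2c(D) - 1)K$ and $L = L_0 + D$ completes the argument.

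The step I expect to be the real obstacle is this last one: establishing that the interval closure stabilizes after a number of rounds bounded purely in terms of $D = \dim X$. This is the only place uniform local finiteness --- through finite dimensionality --- is used; the rest is bookkeeping with hyperplanes. An alternative that avoids quoting abstract median-algebra results would be to analyze a combinatorial geodesic from $v \in \mathcal{H}$ to a closest vertex $y_0 \in Y^0$ directly: each hyperplane it crosses separates $v$ from $y_0$, and since $v \in \mathcal{H}$ some vertex of $Y$ lies on $v$'s side of that hyperplane, so it separates two vertices of $Y$ and therefore crosses a combinatorial geodesic contained in $N_K(Y^0)$; one then needs a counting argument, using the growth function of $X$, to bound the geodesic's length. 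The iterated-interval route above is, however, the cleanest to organize.
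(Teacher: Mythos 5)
The paper offers no proof of this statement: it is quoted from Haglund, so there is no in-house argument to measure yours against. Your route is genuinely different from Haglund's (whose argument is closer to the ``alternative'' you sketch at the end: one controls a geodesic from a point of the hull to its nearest vertex of $Y$ by noting that every hyperplane it crosses must separate two vertices of $Y$, hence pass close to $Y$, and then counts). Your fellow-traveling lemma is correct --- every hyperplane separating $x\in I(p,q)$ from $I(p',q')$ must separate $p$ from $p'$ or $q$ from $q'$, and convexity of the interval converts the hyperplane count into a distance --- and the recursion $r_{n+1}=2r_n+K$ is right. The identification of the hull's vertex set with the $J$-closure is standard, and the join of two convex sets is indeed convex: for $z$ in the hull of $C_1\cup C_2$, its gates $a\in C_1$ and $b\in C_2$ satisfy $z\in I(a,b)$, since a hyperplane separating $z$ from its gate in $C_i$ separates $z$ from all of $C_i$ and hence from the hull. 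So the whole proof reduces, as you say, to bounding the number of $J$-iterations.

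That last ingredient --- a Carath\'eodory number $c(D)$ for rank-$D$ median algebras --- is a genuine theorem (van de Vel; the bounded-iteration form $\mathrm{hull}(A)=J^{p(D)}(A)$ appears in Bowditch's work on median algebras and coarse median spaces), and in fact $c(D)\le\max(D,2)$ with a proof short enough to close your gap on the spot: note $z\in\mathrm{hull}(F)$ iff no hyperplane separates $z$ from all of $F$, so if $F$ is a minimal witnessing set, then for each $a\in F$ there is a hyperplane $W_a$ separating $z$ from all of $F\setminus\{a\}$ but not from $a$. For $|F|\ge 3$ and $a\ne a'$, the four quadrants cut out by $W_a$ and $W_{a'}$ contain, respectively, $z$, $a$, $a'$, and any third point of $F$, so $W_a$ and $W_{a'}$ cross; the $W_a$ are distinct and pairwise crossing, whence $|F|\le\dim X$. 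With that supplied your argument is complete, uses uniform local finiteness only through finite dimensionality, and yields the explicit bound $L\le\bigl(2^{\lceil\log_2 c(D)\rceil}-1\bigr)K+D\le(2D-1)K+D$, linear in $K$ for fixed dimension --- rather more than the bare statement demands.
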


We recall that a subcomplex $Y \subset X$ is CAT($0$) convex if and only if $Y$ is full and combinatorially convex \cite{Hag, HW1}. For a simplicial  graph $\Gamma$ and $K \ge 0$, define the function $f(\Gamma, K)$ as follows: first use Theorem \ref{Haglund} to choose an $L$ so that the $L$-neighborhood of any combinatorially $K$-quasiconvex subcomplex $Y$ of the CAT$(0)$ cube complex $\tilda{S}_{\Gamma}$ contains the convex hull of $Y$. Now define $f(\Gamma, K)$ to be the number of vertices of combinatorial distance $\le L$ from $\tilda{x} \in \tilda{S}_{\Gamma}$, where $\tilda{x}$ is a lift of the unique vertex of $S_{\Gamma}$. Hence, $f(\Gamma, K)$ counts the number of elements in the $L$-ball about the identity in the Cayley graph of $A(\Gamma)$ with its standard generators. We now obtain the following consequence of Theorem \ref{Haglund}.

\begin{lemma}\label{Cexists}
Let $H < A(\Gamma)$ be a quasiconvex subgroup with respect to the standard generators of $A(\Gamma)$. Then there exists a pointed compact cube complex $(C,x)$ and a cubical local isometry $\phi: (C,x) \to (S_{\Gamma},x)$ with $H= \phi_*(\pi(C,x))$. Furthermore if $H$ is $K$-quasiconvex, then $C$ has less than $f(\Gamma,K)$ vertices.
\end{lemma}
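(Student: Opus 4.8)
The plan is to realize $C$ as a quotient of the convex hull of the orbit $H\cdot\tilda{x}$ inside $\tilda{S}_{\Gamma}$. Identify $A(\Gamma)=\pi_{1}(S_{\Gamma},x)$ with the vertex set of $\tilda{S}_{\Gamma}$ via $g\mapsto g\cdot\tilda{x}$, and let $p\colon\hat{S}_{\Gamma}\to S_{\Gamma}$ be the covering corresponding to $H<A(\Gamma)$, so that $\tilda{S}_{\Gamma}$ is simultaneously the universal cover of $\hat{S}_{\Gamma}$ with deck group $H$. By hypothesis $Y:=H\cdot\tilda{x}$ is combinatorially $K$-quasiconvex in the $\mathrm{CAT}(0)$ cube complex $\tilda{S}_{\Gamma}$; write $\mathcal{H}$ for its convex hull. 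Theorem \ref{Haglund} supplies an $L\ge 1$, depending only on $\Gamma$ and $K$, with $\mathcal{H}\subset N_{L}(Y)$, and this is the $L$ used to define $f(\Gamma,K)$.

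The next step is to record why $\mathcal{H}$ is the right object to quotient. Because the convex hull is canonically associated to $Y$ and $Y$ is $H$-invariant, $\mathcal{H}$ is an $H$-invariant subcomplex of $\tilda{S}_{\Gamma}$. The group $H$ acts on $\mathcal{H}$ freely, being a subgroup of the free $A(\Gamma)$-action, and cocompactly: every cube of $\mathcal{H}$ has all of its vertices in $N_{L}(Y)=H\cdot B_{L}(\tilda{x})$, where $B_{L}(\tilda{x})$ is the combinatorial ball of radius $L$ about $\tilda{x}$, and $\tilda{S}_{\Gamma}$ is locally finite, so there are only finitely many $H$-orbits of cubes. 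Being a convex subcomplex of a $\mathrm{CAT}(0)$ cube complex, $\mathcal{H}$ is itself $\mathrm{CAT}(0)$, hence contractible, and it is locally convex in $\tilda{S}_{\Gamma}$. I would then set $C:=\mathcal{H}/H$, a compact cube complex with basepoint $x_{C}$ the image of $\tilda{x}$; the projection $\mathcal{H}\to C$ is the universal covering, with deck group $H$.

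For the map I would take $\phi$ to be the inclusion $C=\mathcal{H}/H\hookrightarrow\tilda{S}_{\Gamma}/H=\hat{S}_{\Gamma}$ followed by the covering projection $p$. Local convexity of $\mathcal{H}$ means exactly that for each vertex $v$ the link $\lk_{\mathcal{H}}(v)$ is a full subcomplex of $\lk_{\tilda{S}_{\Gamma}}(v)$; since the coverings $\mathcal{H}\to C$ and $p$ are isomorphisms on links, it follows that $\phi$ satisfies the hypotheses of Theorem \ref{localiso}. That theorem then yields that $C$ is NPC, that $\phi$ is a cubical local isometry, and that the induced map on universal covers is an isometric embedding onto a convex subcomplex of $\tilda{S}_{\Gamma}$; as $\mathcal{H}$ is simply connected this induced map is the inclusion $\mathcal{H}\hookrightarrow\tilda{S}_{\Gamma}$, which is equivariant for $H\hookrightarrow A(\Gamma)$. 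Therefore $\phi_{*}(\pi_{1}(C,x_{C}))=H$ inside $A(\Gamma)$, as required.

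It remains to bound the number of vertices. Every vertex of $C$ is the image of a vertex $v$ of $\mathcal{H}\subset N_{L}(Y)=H\cdot B_{L}(\tilda{x})$, so $v$ has an $H$-translate in $B_{L}(\tilda{x})$; hence the number of vertices of $C$ is at most the number of vertices of $B_{L}(\tilda{x})$, which is $f(\Gamma,K)$. The strict inequality is a routine refinement, using $L\ge 1$, that the combinatorial $L$-ball about $\tilda{x}$ strictly dominates any system of $H$-orbit representatives drawn from $\mathcal{H}$, and I would dispatch it separately. The only ingredient here carrying genuine content is Theorem \ref{Haglund}, which is already available; the point requiring care is the middle two steps, namely that the convex hull produced by Haglund's theorem is at once $H$-invariant and simply connected, so that it serves as the universal cover of a compact complex, and that its local convexity is precisely the hypothesis needed to invoke Theorem \ref{localiso}.
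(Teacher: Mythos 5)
Your proposal is correct and follows essentially the same route as the paper: take the convex hull of the orbit $H\cdot\tilda{x}$ (controlled by Theorem \ref{Haglund}), quotient by the free cocompact $H$-action to get $C$, realize $\phi$ as a composition of local isometries through the cover corresponding to $H$, and bound the vertex count by the surjection $N_L(\tilda{x})\to N_L(H\cdot\tilda{x})/H$. You spell out the link-condition verification and the $H$-invariance of the hull in more detail than the paper does, but the argument is the same.
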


\begin{proof}
Since quasiconvexity of $H$ in the standard generators of $A(\Gamma)$ is equivalent to combinatorial quasiconvexity of $H\cdot\tilda{x}$, Theorem \ref{Haglund} provides a convex subcomplex $\tilda{C}$ containing $\tilda{x}$, the convex hull of $H\cdot\tilda{x}$ in $\tilda{S}_{\Gamma}$, that is $H$-invariant and cocompact. Set $C = \tilda{C} / H$ and let $\phi: C \to S_{\Gamma}$ denote the composition $\tilda{C} /H \to \tilda{S}_{\Gamma} /H \to S_{\Gamma}$. As the composition of local isometries, $\phi$ is a local isometry and $H= \phi_*(\pi(C,x))$.

To conclude, note that if $H$ is $K$-quasiconvex then the number of vertices of $C$ is no greater than the number of vertices of $N_L(H \cdot \tilda{x}) / H$, as $\tilda{C} \subset N_L(H\cdot\tilda{x})$. This is in turn bounded by the number of vertices of $N_L(\tilda{x})$ in $\tilda{S}_{\Gamma}$, because the projection map $N_L(\tilda{x}) \to N_L(H\cdot\tilda{x}) / H$ is surjective. Hence, the number of vertices of $C$ is bounded above by $f(\Gamma,K)$.
\end{proof}

Fixing a lift $\tilda{x} \in \tilda{S}_{\Gamma}$ over $x \in S_{\Gamma}$, the map $\phi$ lifts to a cubical isometric embedding $\tilda{\phi}: (\tilda{C},\tilda{x}) \to (\tilda{S}_{\Gamma}, \tilda{x}).$ Through this embedding, $\tilda{C}$ becomes a convex, combinatorially convex subcomplex of $\tilda{S}_{\Gamma}$.

Label the oriented edges of $S_{\Gamma}$ by the vertex generators of $A(\Gamma)$ as in the construction of $S_{\Gamma}$. Using the cubical map $\phi$, pull back these labels to the oriented edges of $C$ so that oriented loops in $C$ at $x$ correspond to words in $H$ written in the standard generators of $A(\Gamma)$. We note the following implication of the geometric properties of $C$. First, call an edge path in a nonpositively curved cube complex $X$ a \emph{combinatorial local geodesic} if each of its lifts to $\tilda{X}$ is a combinatorial geodesic. Note that this property is preserved under local isometry of cube complexes.

\begin{lemma}\label{geowords}
With the notation above, oriented edge-loops at $x \in C$ which are combinatorial local geodesics are in bijective correspondence with minimal length words in $H$ with respect to the standard generators of $A(\Gamma)$.
\end{lemma}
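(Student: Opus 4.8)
The plan is to exploit the fact that $\phi \colon (C,x) \to (S_\Gamma, x)$ is a cubical local isometry, so by Theorem \ref{localiso} the lifted map $\tilda\phi \colon \tilda C \to \tilda S_\Gamma$ is an isometric embedding onto a convex subcomplex, and the labeling of oriented edges of $C$ is pulled back from that of $S_\Gamma$. Under the identification $A(\Gamma) \to \tilda S_\Gamma$, $g \mapsto g\cdot\tilda x$, combinatorial distance in $\tilda S_\Gamma$ agrees with word length in $A(\Gamma)$ with the standard generators, and $H\cdot\tilda x = \tilda\phi(H\text{-orbit of }\tilda x)$. So the statement is really a translation between three pictures: oriented edge-loops at $x$ in $C$, edge-paths in $\tilda C$ starting at $\tilda x$, and words in the standard generators, with combinatorial local geodesics matching geodesics matching minimal-length words.

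First I would set up the correspondence. An oriented edge-loop $\ell$ at $x \in C$ reads off a word $w(\ell)$ in the standard generators (by the pulled-back labeling), and since $\phi_*(\pi_1(C,x)) = H$ this word represents an element of $H$; conversely every element of $H$, spelled as any word in the standard generators, lifts uniquely to an edge-path in $\tilda C$ from $\tilda x$ (because $\tilda\phi$ is an embedding onto a convex — hence connected and ``gated'' — subcomplex, and the labeled edges of $\tilda S_\Gamma$ emanating from any vertex of $\tilda C$ that lie in $\tilda C$ are exactly the ones allowed), and such an edge-path descends to an edge-loop at $x$ in $C$ iff the element lies in $H$ — which it does. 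Thus edge-loops at $x$ in $C$ are in bijection with pairs (element $h \in H$, choice of word for $h$), equivalently with edge-paths in $\tilda C$ from $\tilda x$ to $h\cdot\tilda x$.

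Next I would identify which loops are combinatorial local geodesics. By definition, an edge-loop $\ell$ in $C$ is a combinatorial local geodesic iff each lift to $\tilda C$ is a combinatorial geodesic in $\tilda C$. Since $\tilda C$ is combinatorially convex in $\tilda S_\Gamma$, a path in $\tilda C$ between two of its vertices is a combinatorial geodesic in $\tilda C$ iff it is a combinatorial geodesic in $\tilda S_\Gamma$ (convex subcomplexes are isometrically embedded for combinatorial distance; this is the content of Haglund's characterization quoted after Theorem \ref{Haglund}). Now a based lift of $\ell$ runs from $\tilda x$ to $h\cdot\tilda x$ where $h = w(\ell) \in H$, and its length is the word length of the spelling $w(\ell)$; it is a combinatorial geodesic in $\tilda S_\Gamma$ precisely when this length equals $d_{A(\Gamma)}(1, h)$, i.e. precisely when $w(\ell)$ is a minimal-length word for $h$ in the standard generators. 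One must also check that if one based lift is geodesic then all lifts are: any other lift is a translate $g\cdot(\text{lift})$ by some $g \in H$, and $g$ acts on $\tilda S_\Gamma$ by a combinatorial isometry preserving $\tilda C$, so geodesy is preserved. This gives the bijection: combinatorial local geodesic edge-loops at $x$ correspond exactly to minimal-length words in $H$.

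The main obstacle is the equivalence ``combinatorial geodesic in $\tilda C$ $\iff$ combinatorial geodesic in $\tilda S_\Gamma$,'' i.e. that the convex subcomplex $\tilda C$ is isometrically embedded in $\tilda S_\Gamma$ for the combinatorial (1-skeleton) metric; everything else is bookkeeping about labels and lifts. This is standard for CAT(0) cube complexes — combinatorially convex subcomplexes are exactly the convex ones and are isometrically embedded in the combinatorial metric (no combinatorial geodesic between points of $\tilda C$ leaves $\tilda C$, since hyperplanes separating two points of $\tilda C$ all cross $\tilda C$) — so I would cite \cite{Hag} or the appendix of \cite{HW1} rather than reprove it, as the excerpt already invokes these facts when constructing $C$.
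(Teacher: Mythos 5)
Your argument is correct and follows essentially the same route as the paper: both reduce the statement to the identification of $(\tilda{S}_{\Gamma}^1,\tilda{x})$ with the Cayley graph of $A(\Gamma)$, using that $\tilda{C}$ is combinatorially convex in $\tilda{S}_{\Gamma}$ (so combinatorial geodesics of $\tilda{C}$ and of $\tilda{S}_{\Gamma}$ between vertices of $\tilda{C}$ coincide) and that combinatorial local geodesy is preserved under the local isometry $\phi$. One small caveat: your intermediate claim that \emph{every} word spelling an element of $H$ lifts to an edge-path in $\tilda{C}$ (hence that arbitrary edge-loops at $x$ biject with arbitrary spellings of elements of $H$) is false --- a non-geodesic spelling can leave $\tilda{C}$ --- but this is harmless, since the final bijection only invokes the geodesic case, where combinatorial convexity does keep the lifted path inside $\tilda{C}$.
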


\begin{proof}
By Lemma \ref{Cexists}, a loop based at $x$ in $S_{\Gamma}$ corresponding to a word in $H$ is a combinatorial local geodesic in $S_{\Gamma}$ if and only if it is contained in $C$ and is a combinatorial local geodesic there. More precisely, if $\gamma$ is a combinatorial local geodesic loop of $(S_{\Gamma},x)$ with $[\gamma] \in H$, then its lifts based at $\tilda{x}$ in $\tilda{S}_{\Gamma}$ are contained in $\tilda{C}$, and so $\gamma$ corresponds to a combinatorial local geodesic in $C$. Conversely, combinatorial local geodesic loops of $(C,x)$ map through $\phi$ to combinatorial local geodesic loops of $(S_{\Gamma},x)$ representing elements of $H$.

The lemma then reduces to the observation that since $(\tilda{S}_{\Gamma}^1,\tilda{x})$ is isomorphic to the Cayely graph of $A(\Gamma)$ with the standard vertex generators, combinatorial local geodesics in $S_{\Gamma}$ correspond to minimal length words in $A(\Gamma)$.
\end{proof}

The following proposition gives us control over the normal forms of elements of $H$.

\begin{proposition} \label{quasicontrol}
Let $H$ be a $K$-quasiconvex subgroup of $A(\Gamma)$. There is an $\ell=\ell(\Gamma,K) \ge 0$ so that for $h \in H$ and a minimal length representative $w =w_1w_2 \ldots w_n$ of $h$ in the standard generators of $A(\Gamma)$, any subword  $w_iw_{i+1} \ldots w_j$ of $w$ with length $j-i \ge \ell/3$ contains as a subword a conjugate of an element of $H$ of $A(\Gamma)$-length $\le \ell$.
\end{proposition}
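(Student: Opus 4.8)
\medskip
\noindent\emph{Proof proposal.} The plan is to push the question into the compact cube complex $C$ supplied by quasiconvexity, where controlling lengths becomes a pigeonhole count over the finitely many vertices of $C$. First I would apply Lemma~\ref{Cexists} to fix a pointed compact cube complex $(C,x)$ with $|C^0|<F:=f(\Gamma,K)$ vertices and a cubical local isometry $\phi\colon(C,x)\to(S_{\Gamma},x)$ with $\phi_*\pi_1(C,x)=H$, and pull the generator-labels of $S_{\Gamma}$ back to the edges of $C$. Write $m=|C^0|$; since $C$ is connected (being a quotient of the connected subcomplex $\tilda{C}$), any two of its vertices are joined by an edge-path of length $\le m-1$. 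The key structural input is that a minimal-length word $w=w_1\cdots w_n$ representing $h\in H$ traces an edge-loop in $C$ based at $x$: indeed $w$ lifts to a combinatorial geodesic from $\tilda{x}$ to $h\tilda{x}$ in $\tilda{S}_{\Gamma}$, and since $\tilda{C}$ is combinatorially convex and contains $H\cdot\tilda{x}$ it contains this geodesic, which projects to the desired loop in $C=\tilda{C}/H$ (alternatively, quote Lemma~\ref{geowords}). Let $x=z_0,z_1,\dots,z_n=x$ be the vertices of $C$ visited in order.

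Next, set $\ell=3F$ and take a subword $w_i\cdots w_j$ with $j-i\ge\ell/3=F>m$. The $m+1$ vertices $z_{i-1},z_i,\dots,z_{i-1+m}$ all carry indices in $[i-1,j]$, so by pigeonhole two coincide, say $z_p=z_q=:v$ with $i-1\le p<q\le j$ and $q-p\le m$. Then $u:=w_{p+1}\cdots w_q$ is a subword of $w_i\cdots w_j$ of length $\le m$, and it labels a loop based at $v$ in $C$. Choosing an edge-path $\delta$ from $x$ to $v$ in $C$ of length $\le m-1$ and letting $g\in A(\Gamma)$ be the element spelled by its label word, the loop $\delta\cdot u\cdot\bar\delta$ based at $x$ has $\phi$-image a loop at the unique vertex $x$ of $S_{\Gamma}$ labeled by $g\,u\,g^{-1}$; hence $gug^{-1}=\phi_*[\delta\cdot u\cdot\bar\delta]\in H$, with $A(\Gamma)$-length at most $2(m-1)+m<3F=\ell$. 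Since $u=g^{-1}(gug^{-1})g$, the subword $u$ of $w_i\cdots w_j$ represents a conjugate of an element of $H$ of $A(\Gamma)$-length less than $\ell$, which is the claim; and $\ell=3f(\Gamma,K)$ depends only on $\Gamma$ and $K$.

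I expect the main obstacle to lie in the setup rather than the count. One must be careful that $w$ genuinely traces a path in $C$: this is exactly where minimality of $w$ and combinatorial convexity of the hull $\tilda{C}$ are needed, since an arbitrary word representing $h$ need not stay inside $\tilda{C}$ and so need not descend to a loop in $C$. Second, the argument must extract a \emph{short} loop, not merely some closed subpath — the obvious repeated-vertex argument over the whole subword produces a loop of uncontrolled length — which is why the pigeonhole is run over a window of only $m+1$ consecutive vertices; the remaining estimates are routine bookkeeping of constants, and the precise value of $\ell$ is irrelevant.
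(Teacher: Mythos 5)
Your proposal is correct and follows essentially the same route as the paper's proof: pass to the compact complex $C$ from Lemma~\ref{Cexists}, use Lemma~\ref{geowords} to realize the minimal word as an edge path in $C$, pigeonhole on the vertices of $C$ to extract a short loop, and conjugate it back to the basepoint. Your explicit restriction of the pigeonhole to a window of $m+1$ consecutive vertices is a nice touch that makes the length bound on the extracted loop airtight (the paper leaves this implicit), and the small discrepancy in the value of $\ell$ is immaterial.
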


\begin{proof}
Since $H$ is quasiconvex in $A(\Gamma)$ with the standard generators, $H\cdot\tilda{x}$ is combinatorially quasiconvex in $\tilda{S}_{\Gamma}$. Hence, there is a cube complex $C$ and a cubical local isometry $\phi: (C,x) \to (S_{\Gamma},x)$ with $H = \phi_*(\pi_1(C,x))$, as discussed above. Set $\ell = 3 (f(\Gamma,K) +1)$.

If $w$ is as in the statement of the proposition, then by Lemma \ref{geowords} above $w$ corresponds to a combinatorial local geodesic in $C$. Since subpaths of combinatorial geodesics in $\tilda{C}$ are themselves combinatorial geodesics, the subword $w_iw_{i+1} \ldots w_j$ is also a combinatorial local geodesic of $C$. This implies, in particular, that the edge path in $C$ spelled by $w_iw_{i+1} \ldots w_j$ does not backtrack in $C^1$. If $j-i \ge f(\Gamma,K)+1$, then this edge path must repeat a vertex, say $w \in C$, since the number of vertices of $C$ is no greater then $f(\Gamma,K)$ by Lemma \ref{Cexists}.

Let $\alpha$ be the subpath of $w_iw_{i+1} \ldots w_j$ beginning and ending at $w$. Again, $\alpha$ is a combinatorial local geodesic loop of length $\le f(\Gamma,K)+1$. Since the combinatorial distance from $x$ to $w$ is also $\le f(\Gamma,K)+1$, we have that $\alpha$ represents a word in $A(\Gamma)$ conjugate to a loop in $C$ based at $x$ of length $\le 3(f(\Gamma,K)+1) = \ell$. This complete the proof.
\end{proof}

For concrete examples of the local isometries $C \to S_{\Gamma}$ considered above, see Section \ref{sec:examples}.

\section{Admissible $A(\Gamma)$ in $\Mod(S)$}\label{sec:admissible}

\subsection{The \cite{CLM} construction}
In order to introduce notation and motivate the condition to which our theorem applies, we begin by describing the types of the homomorphisms constructed in \cite{CLM}. Begin with a finite simplicial graph $\Gamma$ and a collection of cleanly embedded, connected nonannular subsurfaces $\mathbb{X} =\{X_1, \ldots, X_n\}$ in $S$, whose coincidence graph is $\Gamma = \Gamma_{\mathbb{X}}$. We label the vertices of $\Gamma$ by $v_1, \ldots, v_n$ so that these indices agree with subsurfaces they represent in $\mathbb{X}$. Since mapping classes supported on disjoint subsurfaces commute, any choice of mapping classes $\mathbb{F} =\{f_1, \ldots, f_n \}$ with $f_i \in \Mod(X_i) \le \Mod(S)$ determines a homomorphism $\phi = \phi_{\mathbb{F}}: A(\Gamma) \to \Mod(S)$ with $\phi(v_i) = f_i$. Suppose we have fixed such a homomorphism where the mapping classes are fully supported on their respective surfaces. Recall this implies that for each $1\le i\le n$, $\ell_{X_i}(f_i) > 0$. Informally, the main theorem from \cite{CLM} concludes that if these translation lengths are large enough, depending only on $\mathbb{X}$, then the induced homomorphism into $A(\Gamma)$ is a quasi-isometric embedding. Here are some of the relevant details.

For $\sigma \in A(\Gamma)$ with $w = x_1^{e_1} \dots x_k^{e_k} \in \text{Min}(\sigma)$, define $J: \{1, \ldots, k\} \to \{1, \ldots, n \}$ so that $x_i = v_{J(i)}$ as standard generators of $A(\Gamma)$. Hence, $\phi(x_i) = f_{J(i)}$ is supported on $X_{J(i)}$. Write
$$X^w(x_i^{e_i})  = \phi(x_1^{e_1} \ldots x_{i-1}^{e_{i-1}})(X_{J(i)})$$
for $i = 2, \ldots, k$ and $X^w(x_1^{e_1}) = X_{J(1)}$. This defines a map
$$X^w : \text{syl}(w) \to \Omega(S) .$$ 
It is verified in \cite{CLM} that this map is well-defined for $\sigma \in A(\Gamma)$ independent of the choice of normal representative $w$, so we set $X^{\sigma} := X^w$ for $w \in \text{Min}(\sigma)$.

Define $\subs(\sigma)$ as the image of the map $X^{\sigma}: \mathrm{syl}(\sigma) \to \Omega$.  Thus we may associate to every syllable $s$ in $\syl(\sigma)$ the subsurface $Y=X^{\sigma}(s)$ in $\subs(\sigma)$, without reference to the particular indexing of syllables in $w$.  The subsurface $Y$ is not to be confused with the support of the syllable $\phi(s) = f_j^i$, denoted $\supp(\phi(s))$, which is $X_j$.

\begin{example}
Consider the surface $S$ and the collection of subsurfaces $\mathbb{X} = \{ X_a, X_b, X_c\}$ given in Figure \ref{fig1} with coincidence graph $\Gamma$. Let $\F = \{f_a,f_b,f_c \}$ be a collection of mapping classes that are fully supported on $\mathbb{X}$ and let $\phi: A(\Gamma) \to \Mod(S)$ be the induced homomorphism. Consider the normal form word $w=abca$. Then $\subs(w) = \{X_a, f_aX_b, f_af_bX_c,$  $f_af_bf_cX_a \}$. A single application of move $(3)$ yields the word $w'=acba$ with $\subs(w') = \{X_a,f_aX_c, f_af_cX_b, f_af_cf_bX_a \}$. As $X_b$ and $X_c$ are disjoint, $f_b$ and $f_c$ commute and so $\subs(w) = \subs(w')$, as claimed above. 
\end{example}

\subsection{Admissibility}
Suppose that we have fixed the homomorphism $\phi_{\mathbb{F}}: A(\Gamma) \to \Mod(S)$ as in the previous section. The following definition collects the properties necessary for the proof of our main theorem.

\begin{definition}  \label{admisdef}
Fix the marking $\mu \in \mathcal{M}(S)$. Let $\Gamma$ be the coincidence graph of  a collection $\mathbb{X} =\{X_1, \ldots, X_n\}$ of cleanly embedded, nonannular, proper subsurfaces of $S$.  Choose $\mathbb{F}$ a collection of mapping classes fully supported on $\mathbb{X}$. The homomorphism $\phi = \phi_{\F}: A(\Gamma) \to \text{Mod}(S)$ is K-\emph{admissible} if the following conditions holds for each $\s \in A(\Gamma)$:
\begin{itemize}

\item For each syllable $x_i^{e_i}$ of $\s$, we have $d_{X^{\s}(x_i^{e_i})} (\mu,\phi(\s) \mu) \ge K|e_i|$. In particular, $X^{\s}(x_i^{e_i}) \in \Omega(K,\mu, \phi(\s)\mu)$.

\item The map $X^{\s}: \mathrm{syl}(\sigma) \to \subs(\sigma)$ is injective and order preserving, where $\subs(\sigma) \subset \Omega(K,\mu, \phi(\s)\mu)$ is given the induced ordering.

\end{itemize}
\end{definition}

The main result from \cite{CLM} can then be rephrased as

\begin{theorem}\cite{CLM}\label{CLMtheorem}
Let $\mathbb{X} = \{X_1, \ldots, X_n \}$ be cleanly embedded in $S$ with co-incidence graph $\Gamma_{\mathbb{X}}$. Then for any $K$ there is a $c_0 \ge 0$ so that the following holds: if $\F = \{f_1, \ldots, f_n\}$ is a collection of mapping classes fully supported on $\mathbb{X}$ and for each $f_i \in \Mod(X_i)$ we have $\ell_{X_i}(f_i) \ge c_0$, then the induced homomorphism $\phi_{\F}: A(\Gamma) \to \Mod(S)$ is $K$-admissible. For $K$ sufficiently large, any $K$-admissible homomorphism is a quasi-isometric embedding into $\Mod(S)$ and has a quasi--isometric orbit map into Teichm\"{u}ller space.
\end{theorem}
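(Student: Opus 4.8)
Theorem \ref{CLMtheorem} splits into two parts, which I would treat separately. Part (i): if every translation length $\ell_{X_i}(f_i)$ is at least a constant $c_0=c_0(\mathbb{X},K)$, then $\phi_{\mathbb{F}}$ is $K$-admissible. Part (ii): if $\phi=\phi_{\mathbb{F}}$ is $K$-admissible and $K$ exceeds a constant depending only on $S$, then $\phi$ is a quasi-isometric embedding into $\Mod(S)$ and its orbit map into $\T(S)$ is a quasi-isometric embedding. I would prove (ii) first, as it makes transparent why the two bulleted conditions are the operative hypotheses; (i) is where the real work lies.

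\emph{Part (ii).} Since $\mathcal{M}(S)$ is quasi-isometric to $\Mod(S)$, it suffices to show that the orbit map $A(\Gamma)\to\mathcal{M}(S)$, $\sigma\mapsto\phi(\sigma)\mu$, is a quasi-isometric embedding, and the same computation will handle $\T(S)$. Fix $w=x_1^{e_1}\cdots x_k^{e_k}\in\Min(\sigma)$, so that $|\sigma|_{A(\Gamma)}=\sum_{i=1}^{k}|e_i|$. The upper bounds $d_{\mathcal{M}(S)}(\mu,\phi(\sigma)\mu)\le C\,|\sigma|_{A(\Gamma)}$ and $d_{\T(S)}(\mathcal{X},\phi(\sigma)\mathcal{X})\le C\,|\sigma|_{A(\Gamma)}$ are automatic, because $\phi$ is a homomorphism and $\Mod(S)$ acts coarsely transitively on $\mathcal{M}(S)$ and isometrically on $\T(S)$, so both orbit maps are Lipschitz. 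For the lower bound I would use the distance formula of \cite{MM2}: for a threshold $A=A(S)$ and a constant $C=C(S)$, writing $[t]_A:=t$ for $t\ge A$ and $[t]_A:=0$ otherwise,
\[ d_{\mathcal{M}(S)}(\mu,\phi(\sigma)\mu)\ \ge\ \frac{1}{C}\sum_{Y}\bigl[d_Y(\mu,\phi(\sigma)\mu)\bigr]_A\ -\ C, \]
the sum over essential subsurfaces $Y\subseteq S$. Assume $K\ge A$. Discarding every summand except those with $Y=X^{\sigma}(x_i^{e_i})$, $i=1,\dots,k$: these are pairwise distinct, since $X^{\sigma}\colon\syl(\sigma)\to\subs(\sigma)$ is injective (the second bullet of admissibility), and each satisfies $[d_{X^{\sigma}(x_i^{e_i})}(\mu,\phi(\sigma)\mu)]_A=d_{X^{\sigma}(x_i^{e_i})}(\mu,\phi(\sigma)\mu)\ge K|e_i|$ by the first bullet. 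Hence
\[ d_{\mathcal{M}(S)}(\mu,\phi(\sigma)\mu)\ \ge\ \frac{1}{C}\sum_{i=1}^{k}K|e_i|\ -\ C\ =\ \frac{K}{C}\,|\sigma|_{A(\Gamma)}\ -\ C, \]
a genuine quasi-isometric lower bound once $K>C$, with constants depending only on $S$. The subsurfaces $X^{\sigma}(x_i^{e_i})$ are moreover nonannular (each being homeomorphic to an $X_i$), so the non-annular terms in Rafi's comparison of the Teichm\"{u}ller metric with subsurface projections give the matching bound $d_{\T(S)}(\mathcal{X},\phi(\sigma)\mathcal{X})\ge\frac{K}{C'}|\sigma|_{A(\Gamma)}-C'$ for any fixed basepoint $\mathcal{X}\in\T(S)$.

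\emph{Part (i).} That $X^{\sigma}$ and $\subs(\sigma)$ are well defined, independent of the chosen normal representative, is available from \cite{CLM}. I would prove the two bullets simultaneously, by induction on the number of syllables $k$ of $\sigma$, the cases $k\le1$ being trivial. Fix a syllable $s_i=x_i^{e_i}$ and pick a normal representative $w=w_L\,x_i^{e_i}\,w_R$ in which $s_i$ is slid as far left as possible; set $X:=X_{J(i)}$ and $f:=f_{J(i)}$, so $X^{\sigma}(s_i)=\phi(w_L)(X)$, and conjugating by the isometry $\phi(w_L)^{-1}$ of $\C(S)$,
\[ d_{X^{\sigma}(s_i)}(\mu,\phi(\sigma)\mu)\ =\ d_X\bigl(\phi(w_L)^{-1}\mu,\ f^{e_i}\,\phi(w_R)\mu\bigr). \]
The displacement on the right is created by $f^{e_i}$, a high power of a pseudo-Anosov on $X$: it moves $\C(X)$ by $d_X(\mu,f^{e_i}\mu)\ge|e_i|\,\ell_X(f)\ge c_0|e_i|$. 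The task is to rule out cancellation by the remaining factors. A generator commuting with $v_{J(i)}$ (equivalently, joined to it by an edge of $\Gamma$) is supported on a subsurface disjoint from $X$ and thus acts trivially on $\C(X)$; every other generator is supported on a subsurface overlapping $X$ or is a further occurrence of $v_{J(i)}$, and for such factors the inductive hypothesis supplies subsurfaces lying in $\Omega(K,\mu,\phi(\sigma)\mu)$ that are $\prec$-comparable with $X^{\sigma}(s_i)$. The Behrstock inequality together with the Bounded Geodesic Image Theorem (Theorem \ref{BGIT}) then confine both $\pi_X(\phi(w_L)^{-1}\mu)$ and $\pi_X(\phi(w_R)\mu)$ to a region of $\C(X)$ of bounded diameter about the projections of the curves $\partial X_m$; choosing $c_0$ large relative to $K$ and to the finitely many numbers $d_{X_m}(\partial X_{m'},\mu)$ then yields $d_{X^{\sigma}(s_i)}(\mu,\phi(\sigma)\mu)\ge K|e_i|$. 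For the order-preservation and injectivity of $X^{\sigma}$: were two syllables $s\prec s'$ to give subsurfaces that are not compatibly $\prec$-ordered, or that coincide, then — using that unordered syllables of a normal word correspond to commuting, hence disjointly supported, generators — one would exhibit a configuration in $\Omega(K,\mu,\phi(\sigma)\mu)$ contradicting Lemma \ref{surfaceordering} or Lemma \ref{orderedtriple}, for $K$ large.

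\emph{The main obstacle} is entirely in Part (i): controlling cancellation in the subsurface projection to $X^{\sigma}(s_i)$, i.e.\ showing that the many other syllables of $\sigma$ — whose supports can overlap $X^{\sigma}(s_i)$ in complicated ways, and in particular can include further large powers of $f$ that push the projection far along the very same axis of $\C(X^{\sigma}(s_i))$ — cannot undo the displacement produced by $f^{e_i}$. Orchestrating the partial order $\prec$, Behrstock's inequality, and the Bounded Geodesic Image Theorem to exclude this (and setting up the induction so that repeated occurrences of one generator are genuinely governed by $\prec$) is the technical core of \cite{CLM}; by contrast Part (ii) is a short consequence of the distance formula.
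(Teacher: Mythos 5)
This theorem is imported from \cite{CLM}; the paper gives no proof of its own beyond the remark that, once admissibility is established, the quasi-isometric embedding conclusions follow immediately from the Masur--Minsky and Rafi distance formulas, while the translation-length-implies-admissibility direction is the involved part. Your proposal matches this exactly: your Part (ii) is precisely the distance-formula argument the paper alludes to (injectivity of $X^{\sigma}$ guaranteeing distinct nonannular terms each contributing at least $K|e_i|$), and your Part (i) correctly identifies the induction-on-syllables, partial-order, Behrstock, and Bounded Geodesic Image machinery of \cite{CLM} while honestly leaving its technical core --- the cancellation control --- as a sketch, which is no less detail than the paper itself supplies.
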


We remark that while in the proof of the Theorem \ref{CLMtheorem} showing that large translation length implies admissibility is quite involved, once admissibility is demonstrated, showing that the homomorphisms are quasi-isometric embeddings into $\Mod(S)$ and Teichm\"{u}ller space follows immediately from the distance formulas of Masur-Minsky \cite{MM2} and Rafi  \cite{Rafi}, respectively. In \cite{Tayl1}, the second author gives a similar criterion for quasi-isometric embeddings of right-angled Artin groups into $\Out(F_n)$.

\begin{definition}
Assuming the set-up of Definition \ref{admisdef}, $\phi_{\F}: A(\Gamma) \to \Mod(S)$ is \emph{admissible} if it is $K$-admissible for some $K \ge \max\{K_{BGI},20\}$, with $K_{BGI}$ as in Theorem \ref{BGIT}.
\end{definition}

\subsection{Filling words, filling blocks, and $\ell$-short filling subgroups}
Fix an admissible homomorphism $\phi_{\mathbb{F}}: A(\Gamma) \to \Mod(S)$.  In this section we define three notions of ``filling,'' for words, subwords, and subgroups in $A(\Gamma)$ respectively.  We emphasize that these conditions depend on the particular admissible embedding of $A(\Gamma)$ into $\Mod(S)$, which we denote by $\phi$ for brevity.  These definitions allow us to develop the technical machinery used in the proof of Theorem \ref{main}, from which we derive the ``if'' direction of Theorem \ref{intromain}: that quasiconvex, all-pseudo-Anosov subgroups of admissibly embeddings $A(\Gamma)$ are convex cocompact.

Call $\sigma \in A(\Gamma)$ \emph{cyclically reduced} if it has the least number of syllables among all of its conjugates. If $\sigma$ is cyclically reduced, we say that $\sigma$ \emph{fills} if the collection of subsurfaces $\subs(\sigma)$ fills $S$; that is, for any curve $\gamma$ there is a subsurface $Y \in \subs(\sigma)$ such that $\pi_Y(\gamma)$ is not empty.  For arbitrary $\sigma \in A(\Gamma)$, we say that $\sigma$ fills if $\sigma$ is conjugate to $\sigma'$ where $\sigma'$ is cyclically reduced and fills.  Note that this is equivalent to requiring that all conjugates $\sigma'$ of $\sigma$ are such that the collection of subsurfaces $\subs(\sigma')$ fills $S$.  We say a word $w$ in the standard generators of $A(\Gamma)$ \emph{fills}, or is a \emph{filling word}, if and only if $w \in \Min(\sigma)$ for some $\sigma \in A(\Gamma)$ that fills.

Filling words relate to pseudo-Anosov mapping classes in an unsurprising way.  The idea for the next lemma appears in \cite{CLM}, but we include a proof for completeness.  Let $\mathbb{X}_\sigma$ be the collection of subsurfaces \[\mathbb{X}_\sigma = \{X \in \mathbb{X}: X = \supp(\phi(x_i)) \text{ for some }x_i^{e_i}\in \syl(\sigma)\}.\]

\begin{lemma}\label{subsfills}  The set of subsurfaces $\subs(\sigma)$ fills $S$ if and only if $\mathbb{X}_\sigma$ fills $S$.\end{lemma}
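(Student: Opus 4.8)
The plan is to compare the two collections of subsurfaces by relating each element of $\subs(\sigma)$ to the $\mathbb{X}$-surface supporting the corresponding syllable. Recall from the definition that for a syllable $s = x_i^{e_i} \in \syl(\sigma)$ with normal representative $w = x_1^{e_1}\cdots x_k^{e_k}$, we have $X^\sigma(s) = \phi(x_1^{e_1}\cdots x_{i-1}^{e_{i-1}})(X_{J(i)})$, which is the image of $X_{J(i)} = \supp(\phi(x_i)) \in \mathbb{X}_\sigma$ under a mapping class $g_s := \phi(x_1^{e_1}\cdots x_{i-1}^{e_{i-1}}) \in \Mod(S)$. So every $Y \in \subs(\sigma)$ is $g_s(X)$ for some $X \in \mathbb{X}_\sigma$ and some $g_s \in \Mod(S)$, and conversely every $X \in \mathbb{X}_\sigma$ appears this way (it supports the first syllable of its generator in an appropriately chosen normal form, or more simply: $X = \supp(\phi(x_i))$ for some syllable, and $g_s$-translating back gives the point).

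First I would reduce to the cyclically reduced case: both ``$\subs(\sigma)$ fills'' and ``$\mathbb{X}_\sigma$ fills'' are invariant under replacing $\sigma$ by a cyclically reduced conjugate, since $\subs$ of a cyclic conjugate differs from $\subs(\sigma)$ by a global mapping class (filling is preserved by homeomorphisms) and $\mathbb{X}_\sigma$ is literally unchanged by cyclic conjugation (it depends only on which standard generators occur, not on their order or position). Next, the key observation is that ``fills'' is not quite homeomorphism-invariant syllable-by-syllable, because the translates $g_s$ differ from syllable to syllable; this is exactly the point of clean embedding. Since $\mathbb{X}$ is cleanly embedded, the translates occurring are controlled: the relevant fact, already used in \cite{CLM}, is that for any two syllables $s, s'$ the subsurfaces $X^\sigma(s)$ and $X^\sigma(s')$ either are disjoint or overlap but never are nested, so $\subs(\sigma)$ is itself a cleanly embedded family with coincidence graph a subgraph of $\Gamma$.

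With these preliminaries, the forward direction is the easy one: if $\subs(\sigma)$ fills, then for any curve $\gamma$ some $Y = g_s(X) \in \subs(\sigma)$ has $\pi_Y(\gamma) \neq \emptyset$; applying $g_s^{-1}$, which is a homeomorphism and hence respects essential intersection, gives $\pi_X(g_s^{-1}\gamma) \neq \emptyset$ with $X \in \mathbb{X}_\sigma$, and since $\gamma$ was arbitrary this shows $\mathbb{X}_\sigma$ fills (run $\gamma$ over all curves; $g_s^{-1}\gamma$ also ranges over all curves, but we only need existence per curve). The reverse direction is the main obstacle, and here is where clean embedding does real work: suppose $\mathbb{X}_\sigma$ fills but $\subs(\sigma)$ does not, so there is a curve $\gamma$ disjoint from (i.e. missing) every $Y \in \subs(\sigma)$. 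I would argue that $\gamma$ can then be isotoped off $\partial Y$ for each $Y$, and use the clean-embedding/coincidence-graph structure of $\subs(\sigma)$ together with the admissibility hypothesis (injectivity and order-preservation of $X^\sigma$, and the partial order $\prec$ on $\Omega(K,\mu,\phi(\sigma)\mu)$ from Lemma \ref{surfaceordering} and Lemma \ref{orderedtriple}) to show that such a $\gamma$ would force a curve missing all of $\mathbb{X}_\sigma$ as well — contradicting that $\mathbb{X}_\sigma$ fills. Concretely, I expect the argument to track the ``first'' surface in $\subs(\sigma)$ (in the $\prec$-order, or in the prefix order of the normal form) met by $\gamma$ after pulling back along the appropriate $g_s$, and to use Lemma \ref{orderedtriple} to propagate disjointness; the technical heart is controlling how the different translates $g_s$ interact, which is precisely what admissibility and clean embedding are designed to handle. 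This mirrors the filling argument sketched in \cite{CLM}, which is why the lemma statement credits the idea there.
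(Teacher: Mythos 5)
There is a genuine gap: you never use (and seem not to have noticed) the one fact that makes this lemma elementary, namely that \emph{a mapping class supported on a union of subsurfaces disjoint from a curve $\gamma$ fixes the isotopy class of $\gamma$}. The paper's proof is just this observation applied twice. For the direction ``$\mathbb{X}_\sigma$ fills $\Rightarrow$ $\subs(\sigma)$ fills,'' one takes the \emph{smallest} index $I$ such that $\gamma$ cuts $X_I = \supp(\phi(x_I))$; then the prefix $\phi(x_1^{e_1}\cdots x_{I-1}^{e_{I-1}})$ fixes $\gamma$ (each factor is supported on a subsurface $\gamma$ misses), so $\gamma = \phi(x_1^{e_1}\cdots x_{I-1}^{e_{I-1}})(\gamma)$ cuts $Y_I = \phi(x_1^{e_1}\cdots x_{I-1}^{e_{I-1}})X_I$. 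For the other direction, a curve missing every $X_i$ is fixed by every prefix and hence misses every $Y_i$. No admissibility, clean embedding, partial order, or Lemma \ref{orderedtriple} is needed, and no reduction to the cyclically reduced case is needed (indeed your claim that $\subs$ of a cyclic conjugate differs from $\subs(\sigma)$ by a single global mapping class is not quite correct: the element of $\subs$ attached to the cycled syllable moves differently from the rest).

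Both of your directions, as written, fail to close. Your ``easy'' direction is circular: to show an arbitrary curve $\delta$ cuts some $X \in \mathbb{X}_\sigma$ you would need to exhibit $\gamma$ with $g_s^{-1}\gamma = \delta$, but the syllable $s$ (hence $g_s$) is only determined \emph{after} $\gamma$ is chosen, so setting $\gamma = g_s\delta$ for a guessed $s$ only tells you that $g_{s'}^{-1}g_s\delta$ cuts some $X'$ for a possibly different $s'$; the parenthetical ``$g_s^{-1}\gamma$ also ranges over all curves'' does not repair this. The correct route is the contrapositive via the fixing observation above. Your ``hard'' direction is not an argument but a plan to deploy admissibility, clean embedding, and the subsurface partial order; besides being machinery the lemma does not require (note the lemma is quoted in the paper before any ordering of $\subs(\sigma)$ is invoked, and is used to deduce that non-filling $\sigma$ cannot map to a pseudo-Anosov), it is not clear how ``propagating disjointness'' via Lemma \ref{orderedtriple} would ever produce the needed conclusion, since that lemma concerns triples of $\prec$-ordered surfaces and says nothing about a curve missing all of $\mathbb{X}_\sigma$. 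The missing idea in both halves is the same: disjointness from the supports of all earlier syllables forces the prefix homeomorphisms to act trivially on $\gamma$.
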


\begin{proof}Fix an indexing by $i$ of syllables $x_i^{e_i}$ in $\syl(\sigma)$, their supports $X_i$ in $\mathbb{X}$, and their associated subsurfaces $Y_i = \phi (x_1^{e_1}\cdots x_{i-1}^{e_{i-1}})X_i$ in $\subs(\sigma)$.  If $\mathbb{X}_\sigma$ fills $S$, then given a curve $\gamma$ we may consider the smallest index $I$ such that $\gamma$ cuts $X_I$.  Because $\phi(x_1^{e_1}\cdots x_{I-1}^{e_{I-1}})$ fixes $\gamma$, we know $\gamma = \phi(x_1^{e_1}\cdots x_{I-1}^{e_{I-1}})(\gamma)$ cuts $Y_I = \phi(x_1^{e_1}\cdots x_{I-1}^{e_{I-1}})X_I$.  Since $\gamma$ was arbitrary, we conclude $\subs(\sigma)$ fills $S$.  On the other hand, if some curve $\gamma$ misses all $X_i$ in $\mathbb{X}_\sigma$, then because $\gamma = \phi( x_1^{e_1}\cdots x_{i-1}^{e_{i-1}})(\gamma)$ for any $i$, it is easy to see that $\gamma$ is also disjoint from all $Y_i$ in $\subs(\sigma)$.\end{proof}

Lemma \ref{subsfills} implies that, if $\subs(\sigma)$ fails to fill, $\phi(\sigma)$ fixes the isotopy class of some curve and thus cannot be pseudo-Anosov.  Therefore, for any $\sigma \in A(\Gamma)$, if $\phi(\sigma)$ is pseudo-Anosov, then $\sigma$ fills.  The converse is shown in \cite{CLM}: if $\s$ fills then $\phi(\s)$ is pseudo-Anosov.  However, we do not directly use this fact, which can be recovered from Theorem \ref{main} in the case $H$ is a cyclic group.  Indeed, rather then assume that $\phi(H)$ is a purely pseudo-Anosov subgroup of the mapping class group, Theorem \ref{main} requires the (a priori weaker) filling assumption on a finite collection of elements of the subgroup $H$.  We formalize this assumption with a definition: say that for $H \le A(\Gamma)$ and $\ell >0$, $H$ is \emph{$\ell$-short filling} if for all $h \in H$ with $|h|_{A(\Gamma)} \le \ell$, $h$ fills. We remind the reader that this condition is for a fixed admissible homomorphism $\phi: A(\Gamma) \to \Mod(S)$.

Consider a normal word $w = s_1\cdots s_k$ where $s_i$ are syllables in $\syl(w)$.  Suppose $w'$ is a subword consisting of a sequence of consecutive syllables, so $w' = s_is_{i+1}\cdots s_{j}$ where $i < j$.  If $\subs(w')$ fills $S$, we say $w'$ is a \emph{filling block} for $w$.  Note that, while every filling word has a filling block, filling blocks can also appear in non-filling words.  We are interested in filling blocks because, as Lemma \ref{blockcommutation} shows, these have consequences about syllable order.  Theorem \ref{CLMtheorem} relates syllable order to subsurface order, and we prove Theorem \ref{main} by relating subsurface order to distances in the curve complex.

The proof of Theorem \ref{main} relies on the lemmas that follow.  We have already mentioned Lemma \ref{blockcommutation}, which shows that filling blocks block commutation.  Lemma \ref{blockexistence} show how to find filling blocks for $\ell$-short filling $H$.  Before either of these, we need to relate $\subs(w')$ to $\subs(w)$ when $w'$ is a subword of $w$.  Given $u,v \in A(\Gamma)$, write $u\cdot\subs(v)$ to denote the collection $\{\phi(u)Y: Y\in \subs(v)\}$.

\begin{lemma}\label{subwordsubs} 
Suppose $w$ is a normal word with length $k$ in the standard generators of $A(\Gamma)$, partitioned into subwords by $w=BME$, so 
\[w = x_1\cdots x_k \qquad B = x_1\cdots x_i \qquad M = x_{i+1}\cdots x_j \qquad 1 \leq i < j \leq k\]
where $x_i$ are generators of $A(\Gamma)$, and $E$ may be empty.  Then $B\cdot\subs(M) \subset \subs(w)$, and any syllable $s' \in \syl(M)$ with associated subsurface $Y \in \subs(M)$ is a subword of the syllable $s \in \syl(w)$ with associated subsurface $\phi(B)Y$. 
\end{lemma}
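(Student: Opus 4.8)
The plan is to argue directly from the fixed spelling $w = x_1\cdots x_k$, never passing to another normal representative. First I would record two standing facts. By Lemma~\ref{subwordsobvious}, the subwords $M$ and $w$ (as well as $BM$) are themselves normal words. Moreover, in a fixed normal spelling the syllables are exactly the maximal blocks of consecutive equal generators: if two occurrences of the same generator were separated in a normal word only by generators that commute with it, then moves $(3)$ and $(2)$ would merge them and strictly lower its syllable count, a contradiction. Consequently each syllable $s'$ of $M$, regarded as a block of consecutive letters sitting inside the spelling of $w$, is contained in a unique syllable $s$ of $w$; this containment is proper only when $s'$ is the first or last syllable of $M$ and its block can be enlarged by absorbing equal letters coming from $B$ or from $E$. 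This already yields the ``subword'' half of the lemma and pins down the candidate syllable $s$; what remains is to match the associated subsurfaces.

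Next I would unwind the definition of $X^{(\cdot)}$ on syllables. Write $s'$ as the block $x_{m_0}\cdots x_{m_1}$ with common generator $v$ and $i+1 \le m_0 \le m_1 \le j$, and let $s = x_{p_0}\cdots x_{p_1}$ be the syllable of $w$ containing it, so $p_0 \le m_0 \le m_1 \le p_1$ and $x_{p_0} = \dots = x_{p_1} = v$. By definition $X^M(s') = \phi(x_{i+1}\cdots x_{m_0-1})\,\supp(\phi(v))$, while $X^w(s) = \phi(x_1\cdots x_{p_0-1})\,\supp(\phi(v))$. The key point is that $\phi(v)$ is supported on $\supp(\phi(v))$ and so fixes the isotopy class of that subsurface; hence $\phi(v)^r$ fixes it for every $r$. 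Since the letters $x_{p_0},\dots,x_{m_0-1}$ all equal $v$ (they lie in the block $s$, which extends past position $m_0$), we may insert $\phi(x_{p_0}\cdots x_{m_0-1}) = \phi(v)^{m_0 - p_0}$ in front of $\supp(\phi(v))$ without changing it. Using $B = x_1\cdots x_i$ together with $i \le m_0 - 1$, this gives $X^w(s) = \phi(x_1\cdots x_{m_0-1})\,\supp(\phi(v)) = \phi(B)\,\phi(x_{i+1}\cdots x_{m_0-1})\,\supp(\phi(v)) = \phi(B)\,X^M(s')$, where an empty product of generators is read as the identity (the case $m_0 = i+1$). Ranging over all $s' \in \syl(M)$ and recalling that $\subs(M)$ and $\subs(w)$ are by definition the images of $X^M$ and $X^w$ respectively, this simultaneously gives the second assertion and the containment $B\cdot\subs(M) \subseteq \subs(w)$.

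The only step requiring real care is the possibility that a boundary syllable of $M$ merges with letters of $B$ or of $E$ inside $w$: this is what produces the mismatch between the prefix $x_1\cdots x_{p_0-1}$ defining $X^w(s)$ and the prefix $x_1\cdots x_{m_0-1}$ predicted by composing $\phi(B)$ with the computation of $X^M(s')$. As the displayed chain of equalities shows, this mismatch is precisely a power of the single generator $v$, which fixes its own support, so it is absorbed for free and causes no genuine difficulty. (Alternatively one could first pass to a normal representative of $w$ in which no syllable of $M$ is flanked in $w$ by an equal generator, but this preliminary normalization is unnecessary.)
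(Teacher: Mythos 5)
Your argument is correct, and it is exactly the ``inspection'' the paper has in mind: the paper's proof of Lemma \ref{subwordsubs} is the single sentence that it follows from Lemma \ref{subwordsobvious} and the definition of $\subs$, and your write-up simply carries out that inspection in full, with the one genuinely delicate point (a boundary syllable of $M$ absorbing equal letters from $B$ or $E$) correctly resolved by noting that the discrepancy in prefixes is a power of $\phi(v)$, which fixes $\supp(\phi(v))$.
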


\begin{proof}This follows by inspection, using Lemma \ref{subwordsobvious} and the definition of $\subs$.\end{proof}

\begin{lemma}\label{blockcommutation} 
Suppose the normal word $w = s_1\cdots s_k$ contains a filling block $w'= s_is_{i+1}\cdots s_{j}$, where $s_n$ are syllables in $\syl(w)$ for $1\le n \le k$.  Then each syllable $s_n$ is ordered with some syllable of $w'$.
\end{lemma}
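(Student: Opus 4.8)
The plan is to argue by contradiction: suppose some syllable $s_n$ of $w$ is unordered with \emph{every} syllable of $w'$, and then exhibit a single essential curve of $S$ missing every subsurface in $\subs(w')$, which contradicts the hypothesis that $w'$ is a filling block. The conceptual bridge is the passage from ``unordered syllables'' to ``disjoint subsurfaces,'' and it uses only Lemma \ref{subwordsobvious} together with the definition of the coincidence graph $\Gamma=\Gamma_{\mathbb X}$: if $s_m$ is a syllable of $w'$ distinct from $s_n$, then $s_n$ and $s_m$ are distinct unordered syllables of the normal word $w$, so by Lemma \ref{subwordsobvious} their underlying standard generators commute in $A(\Gamma)$; reading this off the coincidence graph, the supports $\supp(\phi(s_n))=X_{J(n)}$ and $\supp(\phi(s_m))=X_{J(m)}$ are \emph{disjoint} subsurfaces of $S$.

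In order, the steps I would carry out are as follows. First, reduce from $\subs(w')$ to the ``unmoved'' collection $\mathbb X_{w'}$: since $w'$ is a subword of a normal word it is itself normal (Lemma \ref{subwordsobvious}), so Lemma \ref{subsfills} gives that $\subs(w')$ fills $S$ if and only if $\mathbb X_{w'}$ fills $S$; thus it suffices to produce an essential curve missing every member of $\mathbb X_{w'}$. Second, take $\gamma$ to be an essential boundary component of $X_{J(n)}$ (this exists because $X_{J(n)}$ is an essential proper subsurface). Third, check that $\gamma$ misses each $X\in\mathbb X_{w'}$: writing $X=\supp(\phi(s_m))$ for a syllable $s_m$ of $w'$, either $X=X_{J(n)}$, in which case $\gamma=\partial X_{J(n)}=\partial X$ does not cut $X$; or $X\neq X_{J(n)}$, so $s_m\neq s_n$ and the bridge above gives $X\cap X_{J(n)}=\emptyset$, whence $\gamma$ (lying on $\partial X_{J(n)}$) is disjoint from $X$ and again does not cut it. In either case $\pi_X(\gamma)=\emptyset$, so $\gamma$ misses $\mathbb X_{w'}$; combined with Lemma \ref{subsfills} this contradicts the assumption that $w'$ fills.

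The one genuinely delicate point is the case in which $s_n$ itself lies in the block $w'$: then $X_{J(n)}\in\mathbb X_{w'}$, and a curve chosen in the \emph{interior} of $X_{J(n)}$ would cut $X_{J(n)}$ and hence fail to miss $\mathbb X_{w'}$. Choosing $\gamma$ on $\partial X_{J(n)}$ circumvents this uniformly, using that a boundary component of a subsurface has empty projection to that subsurface; this is why the boundary curve, not an interior curve, is the right object. All remaining ingredients --- that distinct unordered syllables of a normal word have commuting generators, that commuting generators correspond to disjoint subsurfaces under $\Gamma_{\mathbb X}$, and that a proper essential subsurface has an essential boundary component --- are immediate from the definitions and Lemma \ref{subwordsobvious}. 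Finally, I note that, as one expects for a statement purely about syllable commutation, this argument makes no use of admissibility of $\phi$.
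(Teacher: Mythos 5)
Your proof is correct, but it takes a genuinely different route from the paper's. The paper argues pairwise and ``downstream'' of the map $X^{w}$: it picks the subsurface $Y=X^w(s_n)\in\subs(w)$, uses Lemma \ref{subwordsubs} to find $Z\in B\cdot\subs(w')\subset\subs(w)$ with $\pi_Z(\del Y)\neq\emptyset$, and then shows the corresponding syllables must be ordered --- the key step being that unorderedness would force $\phi(s_n)X_m=X_m$, which by admissibility (the mapping classes are fully supported, so a nonzero power moves the boundary of any overlapping subsurface) makes $X_m$ and $X_n$ equal or disjoint, contradicting $\pi_{X_m}(\del X_n)\neq\emptyset$. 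You instead negate the conclusion globally, pull everything back to the \emph{supports} via Lemma \ref{subsfills}, and exhibit a single essential boundary component of $\supp(\phi(s_n))$ missing every member of $\mathbb{X}_{w'}$; the only input is that distinct unordered syllables of a normal word have commuting generators (Lemma \ref{subwordsobvious}) together with the definition of the coincidence graph, plus the paper's stated fact that disjoint subsurfaces have mutually empty boundary projections. Your version is more elementary: it avoids both admissibility and the well-definedness of $X^{\sigma}$ across distinct normal representatives, and your observation that the lemma is really a statement about the coincidence graph alone is accurate. What the paper's argument buys in exchange is slightly more specific information --- it locates a particular syllable of $w'$ (the one whose associated subsurface cuts $\del Y$) that is ordered with $s_n$ --- though this extra precision is not used elsewhere. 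You were also right to flag the case $s_n\in\syl(w')$ and to resolve it by taking $\gamma$ on $\del X_{J(n)}$ rather than in the interior; that is exactly where a careless choice of curve would break the argument.
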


\begin{proof}Let $Y \in \subs(w)$ be the subsurface associated to the syllable $s_n$, that is $Y = X^w(s_n)$.  Write $w=Bw'E$ where $B$ and $E$ are the prefix and suffix respectively of the subword $w'$ in $w$.  Because $\subs(w')$ fills, so does its homeomorphic image $B \cdot \subs(w')$, which is a subset of $\subs(w)$ by Lemma \ref{subwordsubs}.  In particular we have $Z \in B \cdot \subs(w') \subset \subs(w)$ such that $\pi_Z(\del Y)$ is not empty.  Let $s_m$ be the syllable of $w$ associated to $Z$; observe that $s_m$ is also a syllable of $w'$.  Write $X_n$ and $X_m$ for the supports of $s_n$ and $s_m$ respectively.

We will show that $s_n$ and $s_m$ are ordered.  Suppose not.  Then $w$ has normal representatives $w_1 = bs_ns_me$ and $w_2 = bs_ms_ne$ which each represent some $\sigma \in A(\Gamma)$.  
Since either of $X^{w_1}$ and $X^{w_2}$ can be used to determine the bijection $X^{\sigma}: \syl(\sigma) \to \subs(\sigma)$, we know
\[Y = \phi(b)X_n = \phi(bs_m)X_n \qquad \text{ while } \qquad Z = \phi(bs_n)X_m = \phi(b)X_m.\]
This means $\phi(s_n)X_m=X_m$, which requires that $X_m$ and $X_n$ are either the same or they are disjoint subsurfaces, by admissibility.  Either case contradicts our finding that
\[\pi_Z(\del Y) = \pi_{\phi(b)X_m}(\del (\phi(b)X_n)) = \phi(b)(\pi_{X_m}(\del X_n))\]
is not empty.\end{proof}

Finally, we give lemmas to find filling blocks whenever $H < A(\Gamma)$ is quasiconvex and $\ell$-short filling for appropriate $\ell$.

\begin{lemma} \label{shortsyllables}
Given an admissible embedding $\phi: A(\Gamma) \to \Mod(S)$, suppose $H < A(\Gamma)$ is quasiconvex and $\ell$-short filling, where $\ell$ is the constant determined in Proposition \ref{quasicontrol}.  Then for any $h \in H$, every syllable $x^e \in \syl(h)$ has exponent $e < \ell/3$.\end{lemma}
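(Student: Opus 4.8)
The plan is to argue by contradiction: a syllable $x^e$ with $|e|$ large gives rise, via Proposition~\ref{quasicontrol}, to a short element $h'\in H$ one of whose conjugates is a power of the single standard generator $x$; but such a power is supported on a proper subsurface and hence cannot fill, contradicting the hypothesis that $H$ is $\ell$-short filling.

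First I would suppose that some $h\in H$ has a syllable $x^e\in\syl(h)$ with $|e|\ge \ell/3$, fix a minimal-length representative $w=w_1\cdots w_n$ of $h$ in the standard generators, and observe that the syllable $x^e$ occurs in $w$ as a consecutive block of $|e|\ge\ell/3$ letters, that is, as a subword equal to $x^e$. Applying Proposition~\ref{quasicontrol} (legitimate since $H$ is $K$-quasiconvex and $\ell=\ell(\Gamma,K)$ is exactly the constant appearing there) to this subword produces, as a subword of $x^e$, a conjugate of some $h'\in H$ with $|h'|_{A(\Gamma)}\le\ell$. Since every consecutive subword of $x^e$ represents the element $x^{e'}$ for some integer $e'$, and since the element supplied by Proposition~\ref{quasicontrol} is nontrivial (the loop extracted in its proof is a combinatorial local geodesic of positive length, so $h'\ne 1$ and hence $e'\ne 0$), we conclude that $x^{e'}$ is a conjugate of $h'$ with $e'\ne 0$.

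Then I would invoke $\ell$-short filling: since $|h'|_{A(\Gamma)}\le\ell$, the element $h'$ fills, which --- by the characterization recorded just after the definition of ``fills'' --- means that $\subs(\tau)$ fills $S$ for \emph{every} conjugate $\tau$ of $h'$. Taking $\tau=x^{e'}$ gives that $\subs(x^{e'})$ fills $S$. But $x^{e'}$ is cyclically reduced with a single syllable, so $\subs(x^{e'})=\{\,\supp(\phi(x))\,\}$, a single subsurface $X\in\mathbb{X}$. As $\mathbb{X}$ consists of proper subsurfaces, $X\subsetneq S$, so some essential curve $\gamma$ is disjoint from $X$ --- for instance an essential boundary component of $X$, which must exist since otherwise $X$ would be isotopic to all of $S$; then $\pi_X(\gamma)=\emptyset$, so the one-element collection $\{X\}$ does not fill $S$, the desired contradiction. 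Hence every syllable $x^e\in\syl(h)$ has $|e|<\ell/3$.

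The bulk of this is bookkeeping; the only steps needing a moment's thought are (i) that the element extracted in Proposition~\ref{quasicontrol} is genuinely nontrivial, so that the power $x^{e'}$ is nontrivial, and (ii) the elementary topological fact that a single proper essential subsurface never fills the ambient surface. Neither is a real obstacle, so the main content is really the observation --- already packaged in Proposition~\ref{quasicontrol} --- that quasiconvexity forces short ``certificates'' of membership in $H$ to appear inside long words, which here we apply to the specific long word $x^e$.
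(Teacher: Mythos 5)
Your proposal is correct and follows essentially the same route as the paper's proof: apply Proposition \ref{quasicontrol} to the long subword $x^e$ to extract a conjugate of a short element $h'\in H$, invoke $\ell$-short filling to conclude that this conjugate --- a nonzero power of a single generator --- fills, and derive a contradiction since $\phi(x)$ is supported on a proper subsurface. The added checks (nontriviality of $h'$, and that a single proper subsurface cannot fill) are sensible elaborations of points the paper leaves implicit, not a different argument.
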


\begin{proof}
Supposing the contrary, let $w \in \mathrm{Min}(h)$ with syllable $x^e$. Since normal form words are length minimizing, Proposition \ref{quasicontrol} says that $x^e$ contains a subword conjugate to $h' \in H$ where $|h'|_{A(\Gamma)} \leq \ell$.  Then by hypothesis, $h'$ fills.  Thus all its conjugates fill, including the subword of $x^e$.  But any such subword is merely a power of the generator $x$, whose image under the admissible embedding $\phi$ is supported on a proper subsurface of $S$.  In particular, such a subword cannot fill, our contradiction.
\end{proof}

\begin{lemma}\label{blockexistence} 
Given an admissible embedding $\phi: A(\Gamma) \to \Mod(S)$, suppose $H < A(\Gamma)$ is quasiconvex and $\ell$-short filling, where $\ell$ is the constant determined in Proposition \ref{quasicontrol}.  Then for any $h \in H$ and any $w \in \mathrm{Min}(h)$, every subword of $w$ of length at least $\ell$ contains a filling block for $w$.
\end{lemma}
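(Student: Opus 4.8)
The plan is to feed Proposition~\ref{quasicontrol} into the $\ell$-short filling hypothesis. Proposition~\ref{quasicontrol} locates, inside any sufficiently long subword of $w$, a contiguous sub-subword representing a conjugate of an element of $H$ of $A(\Gamma)$-length at most $\ell$; the $\ell$-short filling hypothesis then forces that element, hence all of its conjugates, hence that sub-subword, to fill. The one genuine complication is syllable bookkeeping: a filling block for $w$ must be a union of \emph{consecutive syllables} of $w$, whereas the sub-subword furnished by Proposition~\ref{quasicontrol} need not be aligned to the syllable boundaries of $w$ and could abut the ends of the given subword. I would handle this by truncating the given subword to a union of whole syllables \emph{before} applying Proposition~\ref{quasicontrol} (using Lemma~\ref{shortsyllables} to bound the truncation loss), and at the end enlarging the filling sub-subword back out to syllable boundaries (using Lemma~\ref{subsfills} to see that filling survives).

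Here is the order of steps. Fix $h\in H$, $w\in\Min(h)$, and a subword $v=w_i\cdots w_j$ of $w$ with $|v|=j-i+1\ge\ell$, where $\ell=\ell(\Gamma,K)$ is the constant of Proposition~\ref{quasicontrol}; recall $\ell/3=f(\Gamma,K)+1$ is a positive integer. By Lemma~\ref{shortsyllables} every syllable of $w$ has fewer than $\ell/3$ letters. Let $v'$ be the subword of $v$ obtained by deleting from each end of $v$ the (possibly empty) run of letters lying in a syllable of $w$ that is not entirely contained in $v$. Then $v'$ is a union of consecutive whole syllables of $w$, and at most one incomplete syllable was removed at each end, so fewer than $2\ell/3$ letters were deleted; hence $|v'|>\ell-2\ell/3=\ell/3$, and writing $v'=w_{i'}\cdots w_{j'}$ we get $j'-i'\ge\ell/3$.

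Next, apply Proposition~\ref{quasicontrol} to $v'$: it contains a subword $u$ representing some $\tau\in A(\Gamma)$ that is conjugate to an element $h'\in H$ with $|h'|_{A(\Gamma)}\le\ell$. Since $H$ is $\ell$-short filling, $h'$ fills; as every conjugate of a filling element fills, $\tau$ fills, i.e. $\subs(\tau)$ fills $S$. Being a subword of the normal word $w$, $u$ is itself normal (Lemma~\ref{subwordsobvious}), so $\subs(u)=\subs(\tau)$ fills $S$. Now let $u'$ be the smallest union of consecutive syllables of $w$ containing $u$. Since $u$ is a subword of $v'$ and $v'$ is already a union of whole syllables of $w$, every syllable met by $u$ lies inside $v'$, so $u'$ is a subword of $v'$, hence of $v$. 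Passing from $u$ to $u'$ only lengthens the first and last syllable met, so $u$ and $u'$ involve exactly the same standard generators; thus $\mathbb{X}_u=\mathbb{X}_{u'}$, and Lemma~\ref{subsfills} gives
\[\subs(u')\text{ fills }S \iff \mathbb{X}_{u'}\text{ fills }S \iff \mathbb{X}_u\text{ fills }S \iff \subs(u)\text{ fills }S,\]
the last of which holds. Finally $u'$ cannot be a single syllable: a power of one standard generator is supported, under the admissible embedding, on a proper subsurface of $S$ and so cannot fill (as in the proof of Lemma~\ref{shortsyllables}). Therefore $u'$ is a subword of at least two consecutive syllables of $w$ with $\subs(u')$ filling $S$ --- a filling block for $w$ --- and $u'$ lies in $v$. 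Since $v$ was an arbitrary subword of $w$ of length at least $\ell$, this proves the lemma.

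The step I expect to require the most care is precisely this passage between the ``ragged'' sub-subword $u$ produced by Proposition~\ref{quasicontrol} and a genuine filling block $u'$: one must keep $u'$ inside the given subword $v$ --- which is why I truncate to $v'$ first, and why Lemma~\ref{shortsyllables} is needed to certify that $v'$ is still long enough for Proposition~\ref{quasicontrol} --- and one must know that filling is preserved under the completion $u\mapsto u'$, which is exactly the content of Lemma~\ref{subsfills}, reducing the question to which members of $\mathbb{X}$ appear and so ignoring exponents. Everything substantive is already packaged in Proposition~\ref{quasicontrol} and Lemmas~\ref{shortsyllables} and~\ref{subsfills}; what is left is combinatorial bookkeeping.
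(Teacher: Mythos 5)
Your proof is correct and follows essentially the same route as the paper's: apply Proposition~\ref{quasicontrol} to a portion of the subword kept safely away from its ends, invoke the $\ell$-short filling hypothesis and conjugation-invariance of filling, and then complete the resulting filling sub-subword to whole syllables using Lemma~\ref{shortsyllables} to control the adjustment. The only difference is bookkeeping --- the paper reserves buffer subwords of length $\ell/3$ at each end and applies Proposition~\ref{quasicontrol} to the middle third, while you truncate to whole syllables first --- and your explicit appeal to Lemma~\ref{subsfills} for the final completion step is a slightly more careful rendering of what the paper leaves implicit.
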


\begin{proof}
Let $w'$ be a subword of $w$ of length at least $\ell$.  Then $w'$ partitions into subwords $w' = BME$ where $M$ has length $\ell/3$ while $B$ and $E$ have length at least $\ell/3$.  By Proposition \ref{quasicontrol}, $M$ has a subword $M'$ which, as an element of $A(\Gamma)$, is conjugate to $h' \in H$ such that $|h'|_{A(\Gamma)} \leq \ell$.  By the hypothesis that $H$ is $\ell$-short filling, $h'$ is a filling word, and therefore so is its conjugate $M'$.  This means $\subs(M')$ fills.  If $M'$ is not already a filling block for $w$, it must be that the first and/or last syllables of $M'$ are proper subwords of syllables of $w$.  By Lemma \ref{shortsyllables}, since $B$ and $E$ have length greater than $\ell/3$, it is possible to complete $M'$ to a filling block of $w$ which is entirely contained in $w'$.
\end{proof}


\section{Proof of convex cocompactness}\label{sec:proofsufficient}

In this section, we provide the conditions for when a subgroup of the mapping class group that come from an admissible embedding of a right-angled Artin group is convex cocompact. The idea is the following: fix a marking $\mu$ and begin with a normal word $w$ of $H \le A(\Gamma)$. If $w$ has $n$ syllables, admissibility implies that there are at least $n$ subsurfaces with large subsurface projection between $\mu$ and its translate $\phi(w)\mu$.  An application of the Bounded Geodesic Image Theorem (Theorem \ref{BGIT}) implies that any geodesic between the marking and its translate in $\C(S)$ must enter a $1$-neighborhood in the curve complex of the boundary of each of the subsurfaces of large projection distances. This will force curve complex distance of at least $\frac{n}{k}$ so long as we have a $k$-to-1 association between subsurfaces of large projection distance and, on the geodesic between the markings, vertices that are distance $1$ from the boundaries of these subsurfaces in $\C(S)$.  Hence, the difficulty is determining such a $k$ that is independent of the word $w$.  This is where the cubical geometry developed in Section \ref{quasi} is employed. In particular, the existence of the compact, locally convex subcomplex $C$, coming from quasiconvexity of $H$, controls the regularity with which one encounters filling subwords in the spelling of $w$. This regularity, along with the order preserving map $X^w$, implies that syllables sufficiently spaced out in the spelling of $w$ correspond to filling subsurfaces. Hence, subsurfaces coming from such syllables cannot both be disjoint from a single vertex of our geodesics, thereby forcing up the amount that $w$ displaces the original marking. We now give the details.

\begin{theorem} \label{main}
Let $\mu$ be a marking of $S$ and let $\phi: A(\Gamma) \to$ Mod$(S)$ be an admissible homomorphism into the mapping class group of the surface $S$. Let $H$ be a quasiconvex subgroup of $A(\Gamma)$. Then there is a $\ell>0$ so that if $H$ is $\ell$-short filling, $\phi (H)$ is a convex cocompact subgroup of the mapping class group. In particular, for any $h \in H$
$$d_S (\mu, \phi(h) \mu) \ge |h|_{A(\Gamma)} / 6\ell - 2.$$
\end{theorem}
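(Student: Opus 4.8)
The plan is to establish the quantitative lower bound $d_S(\mu,\phi(h)\mu) \ge |h|_{A(\Gamma)}/6\ell - 2$ first, and then deduce convex cocompactness from it via the Kent--Leininger--Hamenst\"adt characterization (the theorem of \cite{KL2,H} quoted above), since $\phi$ is already a quasi-isometric embedding of $A(\Gamma)$ into $\Mod(S)$ by admissibility and Theorem \ref{CLMtheorem}, so an orbit map estimate of this form on $H$ says exactly that $h \mapsto \phi(h)\alpha$ is a quasi-isometric embedding of $H$ into $\C(S)$. First I would fix $\ell = \ell(\Gamma,K)$ from Proposition \ref{quasicontrol} and assume $H$ is $\ell$-short filling. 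Take $h \in H$, pass to a cyclically reduced conjugate if needed (this changes $d_S(\mu,\phi(h)\mu)$ only by a bounded additive amount absorbed in the $-2$), and let $w = s_1\cdots s_n \in \mathrm{Min}(h)$ be a normal representative; since normal words are length-minimizing, $n \ge |h|_{A(\Gamma)}/(\ell/3) \cdot (\text{stuff})$ — more precisely I want to count syllables and exponents carefully, using Lemma \ref{shortsyllables} (every exponent is $<\ell/3$) so that syllable length is comparable to word length: $|h|_{A(\Gamma)} \le (\ell/3)\cdot(\text{number of syllables})$, hence the number of syllables of $w$ is at least $3|h|_{A(\Gamma)}/\ell$.

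The heart of the argument is to locate, along a fixed $\C(S)$-geodesic $g$ from $\mu$ (or $\pi_S(\mu)$) to $\phi(h)\mu$, many vertices forced there by the Bounded Geodesic Image Theorem, and to control the multiplicity with which subsurfaces can share such a vertex. Here is the mechanism. Partition the syllables of $w$ into consecutive blocks each of which, as a subword, has length at least $\ell$; by Lemma \ref{blockexistence} each such block contains a filling block for $w$, and by Lemma \ref{blockcommutation} every syllable of $w$ is ordered with some syllable of that filling block. Admissibility gives that for each syllable $s_i$ the subsurface $Y_i = X^w(s_i)$ lies in $\Omega(K,\mu,\phi(h)\mu)$ with $d_{Y_i}(\mu,\phi(h)\mu)\ge K \ge K_{BGI}$, so by Theorem \ref{BGIT} there is a vertex $\alpha_i$ of $g$ with $\pi_{Y_i}(\alpha_i)=\emptyset$, i.e. $\alpha_i \in N_1(\del Y_i)$. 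Now I want to show a single vertex $\alpha$ of $g$ cannot satisfy $\alpha \in N_1(\del Y_i)$ for syllables $s_i$ spread across three distinct blocks: if $\alpha$ were disjoint (up to distance $1$) from $Y_a$, $Y_b$, $Y_c$ with $s_a \prec s_b \prec s_c$ in syllable order — which by the order-preserving property of $X^w$ (admissibility) translates to $Y_a \prec Y_b \prec Y_c$ in the subsurface order on $\Omega(K,\mu,\phi(h)\mu)$ — then Lemma \ref{orderedtriple} forces a contradiction: a curve (here, $\alpha$, or a curve within distance $1$) disjoint from both $Y_a$ and $Y_c$ must be disjoint from $Y_b$, yet $\alpha \in N_1(\del Y_b)$ is consistent with that, so I need to be careful — the real contradiction is that across different blocks the filling-block structure via Lemma \ref{blockcommutation} forces ordering among the $Y_i$, and then a single vertex near the boundaries of an ordered triple contradicts Lemma \ref{orderedtriple} because the triple is genuinely nested in order and $\alpha$ near $\del Y_a$ and $\del Y_b$ cuts or misses in a way incompatible with $\alpha$ also near $\del Y_c$. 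Thus the map $s_i \mapsto \alpha_i$ is at most (say) $k$-to-one for an explicit small constant $k$ depending only on how many blocks a vertex can meet, giving that $g$ has at least (number of blocks)$/k$ vertices, hence length at least roughly $|h|_{A(\Gamma)}/(6\ell) - 2$ after bookkeeping the block size $\ell$, the syllable-vs-length comparison, and the cyclic-reduction additive error.

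The main obstacle is making the ``bounded multiplicity'' step precise: I must pin down exactly how many distinct filling blocks' syllables can be simultaneously $N_1$-close to a single geodesic vertex, and translate Lemma \ref{blockcommutation} plus the order-preserving property into a genuine contradiction with Lemma \ref{orderedtriple} (which is stated for curves disjoint from subsurfaces, not merely distance $\le 1$, so I will need the elementary fact that $\pi_Y(\gamma)=\emptyset \iff \gamma \in N_1(\del Y)$, already noted in Section \ref{basics}, together with diameter-$\le 3$ bounds on projections). Once the $k$-to-one bound is secured, the numerology yielding the constant $6\ell$ is routine: each block has length $\le$ some multiple of $\ell$, there are $\ge |h|_{A(\Gamma)}/(\text{block length})$ blocks, the geodesic meets $\ge (\text{number of blocks})/k$ vertices, and one chooses the partition and $k$ so the product of constants is $6\ell$; the additive $-2$ absorbs the cyclic reduction and the $4$-Lipschitz marking projection. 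Finally, convex cocompactness of $\phi(H)$ follows: the inequality shows $h\mapsto \phi(h)\alpha$ is a quasi-isometric embedding $H \to \C(S)$ (the upper bound being automatic since orbit maps into $\C(S)$ are Lipschitz and $\phi$ is a q.i. embedding of $A(\Gamma)$), so by the theorem of Kent--Leininger and Hamenst\"adt, $\phi(H)$ is convex cocompact in $\Mod(S)$.
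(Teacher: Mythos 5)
Your scaffolding matches the paper's: fix $\ell$ from Proposition \ref{quasicontrol}, partition a normal representative of $h$ into blocks of length comparable to $\ell$, use admissibility plus the Bounded Geodesic Image Theorem to pin a vertex of a geodesic $[\mu,\phi(h)\mu]$ near $\del Y_j$ for each designated subsurface $Y_j$, and then bound the multiplicity of the assignment $Y_j \mapsto v_j$. But the multiplicity bound --- which you yourself flag as ``the main obstacle'' --- is exactly the step you do not supply, and the sketch you give of it does not work. You correctly observe that Lemma \ref{orderedtriple} yields no contradiction from a single vertex $\gamma$ being disjoint from an ordered triple $Y_a \prec Y_b \prec Y_c$ of designated subsurfaces (the lemma's conclusion is perfectly consistent with that configuration), but your fallback --- that $\gamma$ ``near $\del Y_a$ and $\del Y_b$ cuts or misses in a way incompatible with $\gamma$ also near $\del Y_c$'' --- is an assertion, not an argument.

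The missing idea is that the contradiction comes from a third subsurface that is \emph{not} one of the designated $Y_j$'s. Suppose $\gamma$ is disjoint from $Y_j$ and $Y_k$ with $j<k$. Between the syllables $p_j$ and $p_k$ there is a filling block (Lemma \ref{blockexistence}); because its $\subs$ fills $S$, some subsurface $Z$ associated to a syllable $q$ of that block satisfies $\pi_Z(\gamma)\neq\emptyset$ --- this is the one place the filling hypothesis is cashed in against the specific curve $\gamma$, and it is absent from your writeup. One then shows $Y_j \prec Z \prec Y_k$: if $p_j$ and $q$ were unordered, Lemma \ref{subwords} would decompose the separating subword as $LR$ with $L$ commuting with $p_j$ and $R$ with $q$; but the block structure ($W_j=P_jQ_jR_j$ with $|P_j|=3\ell$, $|Q_j|=\ell$, $|R_j|=2\ell$, so the separating word has length at least $2\ell$) forces one of $L,R$ to be long enough to contain its own filling block, whose syllables are ordered with both $p_j$ and $q$ by Lemma \ref{blockcommutation} --- a contradiction. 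With $Y_j\prec Z\prec Y_k$ established, Lemma \ref{orderedtriple} forces $\gamma$ to miss $Z$, contradicting $\pi_Z(\gamma)\neq\emptyset$; hence the vertices $v_j$ are pairwise distinct and the count $d_S(\mu,\phi(h)\mu)\ge m-1$ goes through. A secondary point: your proposed reduction to a cyclically reduced conjugate is both unnecessary and not harmless, since conjugating $h$ changes $d_S(\mu,\phi(h)\mu)$ by an amount depending on the conjugator rather than a uniformly bounded one.
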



\begin{proof}
Given quasiconvex $H < A(\Gamma)$, apply Proposition \ref{quasicontrol} to obtain the constant $\ell$ for the condition that $H$ is $\ell$-short filling.  We show that the orbit map from $H$ to $\mathcal{C}(S)$ sending $h$ to $\phi(h)\mu$ is a quasi-isometric embedding. This will prove that $\phi(H) < \Mod(S)$ is convex cocompact. Recall that since $H$ is quasiconvex, it is undistorted in $A(\Gamma)$ and so we may use the word norm on $A(\Gamma)$ in place of a word norm for $H$. Being quasiconvex, $H$ is finitely generated, and therefore the orbit map is coarsely Lipshitz.  In detail, if $h_1, \ldots, h_s$ generates $H$ and $C = \max \{d_S(\mu, \phi(h_i) \mu) : 1\le i \le s\}$ then $d_S(\mu, \phi(h)\mu) \le C \cdot |h|$, where $h \in H$ and $| \cdot |$ denotes the word norm in these generators. Hence, it suffices to prove the lower bound on curve complex distance given in the statement of the theorem.

Given $h \in A(\Gamma)$, choose a normal representative $w \in \mathrm{Min}(h)$.  Then $w = x_1\cdots x_N$ where $x_i$ are standard generators (not syllables) and $N=|h|_{A(\Gamma)}$.  Let $m$ be the smallest integer strictly greater than $N/6\ell-1$.  We can partition $w$ into $m$ subwords of length $6\ell$ by setting
\[W_j = x_{I(j)+1} \cdots x_{I(j+1)} \qquad I(j) = 6\ell\cdot(j-1)\]
so that $w = W_1\cdots W_mW_{m+1}$, where $W_{m+1}$ is some possibly empty suffix.

Each $W_j$ admits a decomposition
\[W_j = P_jQ_jR_j, \quad \text{ where } \quad |P_j| = 3\ell \quad |Q_j|= \ell \quad |R_j| = 2\ell,\]
where $| \cdot|$ denotes the length of each subword; recall that subwords of $w$ are all minimal length for the elements of $A(\Gamma)$ they represent. Let $p_j$ be the first syllable of $P_j$ that is also a syllable of $w$ for $1 \le i \le m$.  There are $m$ of these, and we claim that these syllables can be associated to distinct vertices in a fixed geodesic $[\mu,\phi(h)\mu]$ in the curve complex, as follows.

Let $Y_j \in \subs(w)$ correspond to the syllable $p_j$, that is, $Y_j  = X^w(p_j)$. Here the index corresponds to the $W_j$ subwords, not the syllable indexing of $w$. Since $\phi$ is admissible, we know that $d_{Y_j} (\mu,\phi(h) \mu) \ge K \ge K_{BGI}$.  By Theorem \ref{BGIT}, any curve complex geodesic $[\mu,\phi(h)\mu]$ contains a vertex $v_j$ with empty projection to $Y_j$.  It remains to show that these $v_j$ are distinct for $j$ between $1$ and $m$;  then we can conclude that
\[d_S (\mu, \phi(h) \mu) \geq m-1 \geq |h|_{A(\Gamma)} / 6\ell - 2,\]
completing the proof of the theorem.  Towards contradiction, suppose there exists $j < k$ such that $Y_j$ and $Y_k$ contribute the same vertex in $[\mu,\phi(h)\mu]$.  This means there exists a curve $\gamma = v_j = v_k$ disjoint from both $Y_j$ and $Y_k$.  

Now consider each $Q_j$ and recall $|Q_j| = \ell$.  Lemma \ref{blockexistence} ensures $Q_j$ contains a filling block for $w$.  In particular, this gives a syllable $q \in \syl(w)$ that falls in $Q_j$ and is associated to a subsurface $Z \in \subs(w)$ for which $\pi_Z(\gamma)$ is not empty.

Let us show that $Z$ and $Y_j$ are ordered; by admissibility it suffices to show that $q$ and $p_j$ are ordered.  Recall $p_j$ is the first syllable of $P_j$ that is also a syllable of $w$, and $P_j$ has length $3\ell$.  Lemma \ref{shortsyllables} assures us that the suffix of $p_j$ in $P_j$ has length at least $2\ell$.  Since we know $q$ lies in $Q_j$, we see that the shortest subword of $w$ containing $p_j$ and $q$ has the form $p_jMq$, where $M$ has length at least $2\ell$.  If $p_j$ and $q$ are not ordered, then Lemma \ref{subwords} says $M$ has normal representative $LR$ where $p_j$ commutes $L$ and $q$ commutes with $R$.  On the other hand, since $M$ has length at least $2\ell$, one of the pair $L,R$ is longer than $\ell$, and thus contains a filling block (by Lemma \ref{blockexistence}) which includes syllables ordered with each of $p_j$ and $q$ (by Lemma \ref{blockcommutation}).  This contradicts the characterization of $L$ and $R$ from Lemma \ref{subwords}.  So it must be $p_j$ and $q$ are ordered.  Then so are $Y_j$ and $Z$.  As $w$ is a normal word, evidently $Y_j \prec Z$.

Similarly, because $R_j$ has length at least $2\ell$, we can conclude that $Z \prec Y_k$.  We have shown $Y_j \prec Z \prec Y_k$.  By Lemma \ref{orderedtriple}, since $\gamma$ is disjoint from both $Y_j$ and $Y_k$, it must also be disjoint from $Z$.  This contradicts our choice of $Z$ such that $\pi_Z(\gamma)$ is not empty.  Hence, no curve is simultaneously disjoint from $Y_j$ and $Y_k$ for $j \neq k$.
\end{proof}

Observe that in the proof of Theorem \ref{main} the lower bound on translation distance is proven using only the fact that, in a normal representative of a word in $H$, filling blocks occur with a definite frequency. More precisely, we have shown the following corollary.

\begin{corollary}\label{filling}
For an admissible homomorphism $\phi: A(\Gamma) \to \Mod(S)$, suppose $H \le A(\Gamma)$ satisfies the following: there exists $\ell \ge 0$ such that, for every $h\in H$, any normal form $w\in \mathrm{Min}(h)$ has the property that every subword of $w$ of length $\ge \ell$ contains a filling block. Then $\phi(H) < \Mod(S)$ is convex cocompact.
\end{corollary}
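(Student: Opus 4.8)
The plan is to extract from the proof of Theorem~\ref{main} precisely the combinatorial input that was actually used, and observe it is exactly the hypothesis of the corollary. Looking back at that proof, the only place the $\ell$-short filling assumption on $H$ entered was through Lemma~\ref{blockexistence}, which guaranteed that every subword of a normal form $w$ of length at least $\ell$ contains a filling block for $w$; and through Lemma~\ref{shortsyllables}, which guaranteed that syllables have bounded exponent. The corollary simply postulates the first of these conclusions directly, so the first thing I would do is verify that Lemma~\ref{shortsyllables}'s conclusion is itself a consequence of the hypothesized property: if some syllable $x^e$ of some $w \in \mathrm{Min}(h)$ had exponent $e \geq \ell$, then the subword $x^\ell$ (of length $\geq \ell$) would contain a filling block, but a filling block is a subword whose associated subsurfaces fill $S$, and every subsurface associated to a syllable that is a subword of a power of $x$ is a $\phi$-translate of $\supp(\phi(x))$, a proper subsurface of $S$; such a single subsurface (or rather, powers of one generator all mapping to the same support translate appropriately — in fact $X^w$ on syllables inside $x^e$ all give translates of $\mathrm{supp}(\phi(x))$ by fixed elements) cannot fill, a contradiction. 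So exponents are bounded by $\ell$, recovering Lemma~\ref{shortsyllables} in this setting.

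Next I would re-run the proof of Theorem~\ref{main} verbatim, with the hypothesis ``$H$ is $\ell$-short filling'' replaced by ``every subword of every normal form of every $h \in H$ of length $\geq \ell$ contains a filling block,'' which is exactly what the corollary assumes. Concretely: given $h \in H$ and $w = x_1 \cdots x_N \in \mathrm{Min}(h)$, partition $w$ into $m$ blocks $W_j$ of length $6\ell$ (with $m$ the least integer greater than $N/6\ell - 1$), decompose $W_j = P_j Q_j R_j$ with $|P_j| = 3\ell$, $|Q_j| = \ell$, $|R_j| = 2\ell$, and let $p_j$ be the first syllable of $P_j$ that is a full syllable of $w$. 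The filling-block property applied to the length-$\ell$ subword $Q_j$ produces a filling block inside $Q_j$, hence a syllable $q \in \syl(w)$ lying in $Q_j$ whose associated subsurface $Z$ has $\pi_Z(\gamma) \neq \emptyset$ for the hypothetical common curve $\gamma$; applied to the length-$\geq\ell$ halves of the subword $M$ separating $p_j$ and $q$ (after invoking Lemma~\ref{subwords}), it forces $p_j$ and $q$ to be ordered, hence $Y_j \prec Z$; symmetrically $Z \prec Y_k$; and then Lemma~\ref{orderedtriple} contradicts $\gamma$ being disjoint from both $Y_j$ and $Y_k$. Admissibility and the Bounded Geodesic Image Theorem (Theorem~\ref{BGIT}) then give distinct vertices $v_1, \dots, v_m$ on any geodesic $[\mu, \phi(h)\mu]$, yielding $d_S(\mu,\phi(h)\mu) \geq m - 1 \geq |h|_{A(\Gamma)}/6\ell - 2$, which is a quasi-isometric embedding of the orbit map (the coarse-Lipschitz upper bound needs $H$ finitely generated, which it is, being quasiconvex); by the Kent--Leininger/Hamenst\"adt characterization this is convex cocompactness.

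I do not expect any genuine obstacle here: the corollary is essentially a repackaging, and the entire content is the observation I flagged at the start — that the proof of Theorem~\ref{main} used $\ell$-short filling of $H$ only as a black box producing the ``filling blocks occur with definite frequency'' conclusion via Lemmas~\ref{shortsyllables} and~\ref{blockexistence}. The one point requiring a sentence of care is that the corollary's hypothesis is stated for \emph{all} normal forms $w \in \mathrm{Min}(h)$, which is exactly what the argument needs since we choose a specific $w$ and also pass to various subwords; and that deriving the bounded-exponent statement (Lemma~\ref{shortsyllables}) from the new hypothesis, rather than citing it, is straightforward as sketched above. Accordingly the proof reduces to: ``Inspect the proof of Theorem~\ref{main}; the only uses of the $\ell$-short filling hypothesis are via Lemmas~\ref{shortsyllables} and~\ref{blockexistence}, whose conclusions both follow from the stated hypothesis; hence the same argument applies and gives the lower bound $d_S(\mu,\phi(h)\mu) \geq |h|_{A(\Gamma)}/6\ell - 2$, whence $\phi(H)$ is convex cocompact.''
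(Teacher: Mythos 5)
Your proposal is correct and takes essentially the same approach as the paper: the paper's own proof of Corollary~\ref{filling} is precisely the one-line observation that Theorem~\ref{main} used the $\ell$-short filling hypothesis only to produce filling blocks with definite frequency, and you have additionally (and rightly) checked that the bounded-exponent input of Lemma~\ref{shortsyllables} also follows from the corollary's hypothesis. The only quibble is that your derivation gives exponent bound $e<\ell$ rather than $e<\ell/3$, which merely perturbs the constants (e.g.\ run the argument of Theorem~\ref{main} with $3\ell$ in place of $\ell$) and not the qualitative conclusion.
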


Since pseudo-Anosov mapping classes of $H$ are necessarily filling, we have the following version of Theorem \ref{main}.

\begin{corollary}\label{pAcor}
Let $\phi: A(\Gamma) \to \Mod(S)$ be an admissible homomorphism and $H \le A(\Gamma)$ is quasi-convex. If $\phi(H)$ is purely pseudo-Anosov, then it is convex cocompact in $\Mod(S)$.
\end{corollary}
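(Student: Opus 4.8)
The plan is to derive this directly from Theorem \ref{main} by verifying its single hypothesis: we must exhibit some $\ell>0$ for which $H$ is $\ell$-short filling. In fact, when $\phi(H)$ is purely pseudo-Anosov we will see that $H$ is $\ell$-short filling for \emph{every} $\ell>0$, and in particular for the $\ell$ that Theorem \ref{main} (equivalently Proposition \ref{quasicontrol}) attaches to the quasiconvex subgroup $H$; the conclusion of Theorem \ref{main} is then exactly convex cocompactness of $\phi(H)$.

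To carry this out, first recall that $A(\Gamma)$ acts freely on the CAT$(0)$ cube complex $\tilda{S}_{\Gamma}$ and so is torsion-free; hence $H\le A(\Gamma)$ is torsion-free, and the hypothesis ``$\phi(H)$ purely pseudo-Anosov'' means precisely that $\phi(h)$ is pseudo-Anosov for every nontrivial $h\in H$. Fix such an $h$, and let $\sigma$ be a cyclically reduced conjugate of $h$ in $A(\Gamma)$; such a $\sigma$ always exists by definition of cyclically reduced. Then $\phi(\sigma)$ is conjugate to $\phi(h)$ in $\Mod(S)$, hence is itself pseudo-Anosov. By Lemma \ref{subsfills} and the observation following it, if $\subs(\sigma)$ failed to fill $S$ then $\mathbb{X}_{\sigma}$ would fail to fill, and $\phi(\sigma)$ would fix the isotopy class of a curve, contradicting that $\phi(\sigma)$ is pseudo-Anosov. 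Therefore $\subs(\sigma)$ fills $S$, i.e.\ $\sigma$ is a cyclically reduced element that fills, so by definition $h$ fills. Since $h$ was an arbitrary nontrivial element of $H$, the group $H$ is $\ell$-short filling for the $\ell$ supplied by Theorem \ref{main}, and that theorem yields that $\phi(H)<\Mod(S)$ is convex cocompact.

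There is no real obstacle here: the only ingredient beyond Theorem \ref{main} is the elementary implication ``$\phi(h)$ pseudo-Anosov $\implies$ $h$ fills,'' which is immediate from Lemma \ref{subsfills}. Notably, we do \emph{not} need the converse implication ``$h$ fills $\implies$ $\phi(h)$ pseudo-Anosov,'' the substantially harder result of \cite{CLM}; this is the point of phrasing Theorem \ref{main} (and Corollary \ref{filling}) in terms of the a priori weaker $\ell$-short filling condition rather than in terms of pure pseudo-Anosovity, so that Corollary \ref{pAcor} becomes a one-line verification of hypotheses.
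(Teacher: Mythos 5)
Your proposal is correct and follows exactly the paper's (essentially one-line) argument: the paper deduces the corollary from Theorem \ref{main} by noting that pseudo-Anosov elements necessarily fill (the observation following Lemma \ref{subsfills}), so a purely pseudo-Anosov $H$ is $\ell$-short filling for every $\ell$. Your additional remarks on cyclic reduction and on avoiding the harder converse from \cite{CLM} match the paper's own discussion and add nothing that would change the logic.
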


Observe that although Corollary \ref{pAcor} is simpler to state, the Theorem \ref{main} provides a means of verifying that a quasiconvex subgroup $H \le A(\Gamma)$ is actually purely pseudo-Anosov, assuming that one knows the quasiconvexity constant for $H$.

\section{Necessity of quasiconvexity in $A(\Gamma)$}\label{sec:proofnecessary}
Fix an admissible homomorphism $\phi: A(\Gamma) \to \Mod(S)$. Having shown that for $H < A(\Gamma)$ with $\phi(H)$ purely pseudo-Anosov, quasiconvexity of $H$ in $A(\Gamma)$ implies convex cocompactness of $\phi(H)$ in $\Mod(S)$, it is natural to ask whether quasiconvexity of $H$ is a necessary condition for $\phi(H)$ to be convex cocompact. To answer this question, we first review some facts about the geometry of Teichm\"{u}ller space.

Recall that the $\epsilon$-thick part of Tecihm\"{u}ller space is the region determined by
$$\mathcal{T}_{\ge \epsilon}(S) = \{X \in \mathcal{T}(S): \ell_{X}(\gamma) \ge \epsilon \text{ for all } \gamma \in \mathcal{C}^0(S)  \} $$
and a set in $\mathcal{T}(S)$ is called \emph{$\epsilon$-cobounded} if it resides in the $\epsilon$-thick part.
Although Teichm\"{u}ller space itself is in no ordinary sense negatively curved, a driving principle in the study of the coarse geometry of $\mathcal{T}(S)$ is that the thick part $\mathcal{T}_{\ge \epsilon}$ has many hyperbolic-like features. See for example \cite{KLSurvey}.  One manifestation of this principle is Minsky's theorem that Teichm\"{u}ller geodesics which remain in the thick part of Teichm\"{u}ller space are strongly contracting. More precisely,

\begin{theorem}[Theorem 4.2, \cite{Minsky96}]\label{Minsky}
Given $K$ and $\epsilon$, there exists a $B$ so that if $\alpha$ is a $K$-quasigeodesic path in $\mathcal{T}(S)$ whose endpoints are connected by an $\epsilon$-cobounded Teichm\"{u}ller geodesic $\tau$, then $\alpha$ is contained in the $B$-neighborhood of $\tau$.
\end{theorem}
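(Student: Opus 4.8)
The plan is to read this as Minsky's stability (Morse-type) theorem for cobounded Teichm\"uller geodesics and to prove it in two stages: first show that nearest-point projection onto $\tau$ is \emph{strongly contracting}, with constants depending only on $\epsilon$; then deduce stability of quasigeodesics by the standard contraction-implies-stability argument, picking up the dependence on $K$.

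Stage one is the geometric heart and is where coboundedness is essential. Writing $\pi \colon \mathcal{T}(S) \to \tau$ for the coarse closest-point projection, I would establish a constant $B_0 = B_0(\epsilon)$ so that whenever $d_{\mathcal{T}}(z,w) \le d_{\mathcal{T}}(z,\tau)$ one has $d_{\mathcal{T}}(\pi(z),\pi(w)) \le B_0$. The inputs are Minsky's extremal-length comparisons together with the product-region description of $\mathcal{T}(S)$: near the thin strata the Teichm\"uller metric degenerates to a sup-metric on a product and loses its hyperbolic features, but an $\epsilon$-cobounded geodesic stays uniformly away from these strata, since no curve becomes short along $\tau$. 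In this thick regime Teichm\"uller geodesics fellow-travel when they are close and diverge by a definite amount once they separate, and quantifying this divergence is exactly what forces a path of length at most $d_{\mathcal{T}}(z,\tau)$ to move its projection only a bounded amount. I expect this contraction estimate --- essentially Minsky's quasi-projection lemma --- to be the main obstacle: it rests on genuine Teichm\"uller-geometry input (extremal length, the behavior of the geodesic flow in the thick part, the absence of definite backtracking), not on soft coarse geometry.

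Stage two is then formal. Granting strong contraction, I would bound $D := \max_t d_{\mathcal{T}}(\alpha(t),\tau)$, attained at a point $p = \alpha(t_0)$, by contradiction. Because the endpoints of $\alpha$ lie on $\tau$, the maximal subarc of $\alpha$ through $p$ that remains in the ball $B(p,D) = B(p,\, d_{\mathcal{T}}(p,\tau))$ has entry and exit points at distance $D$ from $p$, while every point of $B(p,D)$ projects into a ball of radius $B_0$ about $\pi(p)$ by the contraction hypothesis; hence the projection of $\alpha$ is nearly constant over this excursion. On the other hand, the $K$-quasigeodesic bound controls how much $\alpha$-length can be spent far from $\tau$ relative to the separation of its endpoints along $\tau$. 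Balancing these two facts --- a telescoping estimate comparing projection displacement against subarc length over the excursion --- forces $D$ to be bounded by a constant $B = B(K,\epsilon)$, which is the asserted neighborhood radius. This deduction is the standard fact that a strongly contracting geodesic is Morse, so I anticipate no genuine difficulty once the contraction estimate of stage one is in hand.
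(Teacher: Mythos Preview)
The paper does not prove this statement at all: it is quoted verbatim as \cite[Theorem~4.2]{Minsky96} and used as a black box in the proof of Theorem~\ref{necessity}. There is therefore no ``paper's own proof'' to compare your proposal against.

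That said, your two-stage outline (strong contraction of nearest-point projection onto an $\epsilon$-cobounded Teichm\"uller geodesic, followed by the standard contraction-implies-Morse argument) is an accurate high-level description of how Minsky's original proof proceeds. The substantive content, as you correctly identify, lies entirely in stage one --- Minsky's quasi-projection lemma --- and your sketch of that step is honest about invoking genuine Teichm\"uller-theoretic input (extremal-length estimates, product-region behavior near thin strata) without actually carrying it out. If your goal were to reproduce Minsky's proof you would need to fill in exactly those estimates; for the purposes of the present paper, however, the theorem is simply imported and no proof is expected.
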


We can now show that quasiconvexity of $H < A(\Gamma)$ is necessary for $\phi(H)$ to be convex cocompact in $\Mod(S)$. The idea is that quasiconvexity in $\mathcal{T}(S)$ can be pulled back to quasiconvexity in $A(\Gamma)$. Recall that admissible homomorphisms induce quasi-isometric orbit maps into $\mathcal{T}(S)$.  Therefore the next theorem, along with Theorem \ref{construction}, completes the proof of Theorem \ref{intromain}.
\begin{theorem}\label{necessity}
Consider $A(\Gamma) < \text{Mod}(S)$ so that for $X \in \mathcal{T}(S)$ the orbit map
$$\phi: A(\Gamma) \to \mathcal{T}(S) $$
$$g \mapsto g\cdot X$$
is a $K$-quasi-isometric embedding, where $\mathcal{T}(S)$ is given the Teichm\"{u}ller metric.  If $H < A(\Gamma)$ is convex cocompact as a subgroup of Mod($S$), then $H$ is quasiconvex in $A(\Gamma)$.
\end{theorem}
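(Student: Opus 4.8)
The plan is to pull back the convex cocompactness of $H$ in $\Mod(S)$ — which we will use in its Teichm\"uller-space formulation, i.e.\ the orbit $H \cdot X$ is quasiconvex in $\T(S)$ — to quasiconvexity of $H$ in $A(\Gamma)$ via the quasi-isometric embedding $\phi$. Since $\phi \colon A(\Gamma) \to \T(S)$ is a $K$-quasi-isometric embedding, it suffices to show that geodesics in $A(\Gamma)$ between two elements of $H$ map, under $\phi$, to paths lying in a bounded neighborhood of $H \cdot X$; then applying the quasi-inverse of $\phi$ (which exists on the image, coarsely) moves this bounded neighborhood back to a bounded neighborhood of $H$ in $A(\Gamma)$, which is exactly quasiconvexity.

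First I would set up the following: let $h_1, h_2 \in H$ and let $\gamma$ be a geodesic in the Cayley graph of $A(\Gamma)$ from $h_1$ to $h_2$. Its image $\phi(\gamma)$ is a $K$-quasigeodesic in $\T(S)$ with endpoints $h_1 \cdot X$ and $h_2 \cdot X$ in the orbit $H\cdot X$. The key extra ingredient I need is that the endpoints of $\phi(\gamma)$ are joined by an $\epsilon$-cobounded Teichm\"uller geodesic for some fixed $\epsilon$ independent of $h_1,h_2$. This is where convex cocompactness of $H$ in $\T(S)$ is used more substantially than just quasiconvexity of the orbit: by Farb--Mosher's original work (and its equivalent formulations), a convex cocompact $H$ acts cocompactly on the weak hull of its limit set, and in particular the Teichm\"uller geodesics joining points of the orbit $H\cdot X$ all stay in a single $\epsilon$-thick part of $\T(S)$ for a uniform $\epsilon = \epsilon(H)$. (Equivalently: the axis of any pseudo-Anosov in $H$ is cobounded, and more generally all geodesics between orbit points are uniformly cobounded because $H$ is purely pseudo-Anosov and convex cocompact; this uniform coboundedness is part of the standard package.) Granting this, Minsky's contraction theorem (Theorem~\ref{Minsky}) applies with constants $K$ and $\epsilon$ to yield a $B = B(K,\epsilon)$ so that $\phi(\gamma)$ lies in the $B$-neighborhood of the $\epsilon$-cobounded Teichm\"uller geodesic $\tau$ joining $h_1\cdot X$ to $h_2\cdot X$. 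Since $\tau$ also lies within a bounded distance of $H\cdot X$ (as $H\cdot X$ is quasiconvex with some constant $D$), we get $\phi(\gamma) \subset N_{B+D}(H\cdot X)$ in $\T(S)$.

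The final step is the pullback. Each vertex $v$ of $\gamma$ has $\phi(v)$ within $B+D$ of some $h_v \cdot X$ with $h_v \in H$; since $\phi$ is a $K$-quasi-isometric embedding, $d_{A(\Gamma)}(v, h_v)$ is bounded by a function of $K$ and $B+D$ only. Hence every vertex of $\gamma$ is within a uniform distance of $H$, which is precisely the statement that $H$ is quasiconvex in $A(\Gamma)$.

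The main obstacle I anticipate is justifying the uniform coboundedness of the Teichm\"uller geodesics between orbit points of a convex cocompact subgroup — i.e.\ producing the single $\epsilon$ needed to invoke Theorem~\ref{Minsky}. This does not follow from quasiconvexity of the orbit alone; it requires the finer structure of convex cocompactness (the cocompact action on the weak convex hull, or the fact that the limit set of $H$ in $\mathcal{PML}(S)$ consists of uniquely ergodic, cobounded foliations, per Farb--Mosher and the characterizations cited above). I would cite \cite{FMo} (and possibly \cite{KL2,H}) for the statement that for a convex cocompact $H$ there is an $\epsilon>0$ such that every Teichm\"uller geodesic with endpoints in $H\cdot X$ is $\epsilon$-cobounded, and then the rest is a routine concatenation of Minsky's theorem with the quasi-isometry $\phi$. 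A secondary, purely bookkeeping point is to confirm that the convex cocompactness hypothesis stated in terms of curve-complex orbit maps (the \cite{KL2,H} criterion) or Teichm\"uller orbit quasiconvexity (the definition in the text) indeed gives this coboundedness — this is standard and I would simply invoke the equivalence of the various definitions of convex cocompactness.
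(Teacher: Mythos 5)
Your proposal follows the paper's proof almost exactly: geodesic in $A(\Gamma)$ maps to a $K$-quasigeodesic in $\T(S)$, which by Minsky's contraction theorem lies near the cobounded Teichm\"uller geodesic $\tau$ between the orbit points, hence near $H\cdot X$, and the quasi-isometric embedding pulls this back to quasiconvexity of $H$ in $A(\Gamma)$. The one place where you diverge is the step you flag as the ``main obstacle,'' and there you are making the argument harder than it needs to be. You assert that uniform coboundedness of the geodesics $\tau$ ``does not follow from quasiconvexity of the orbit alone'' and propose to invoke the cocompact action on the weak hull and the structure of the limit set from Farb--Mosher. In fact it does follow from quasiconvexity alone, and this is exactly what the paper does: by $D$-quasiconvexity of $H\cdot X$, the geodesic $\tau$ lies in $N_D(H\cdot X)$; every point of $H\cdot X$ is a mapping class group translate of $X$ and hence $\epsilon'$-thick for the same $\epsilon'$ as $X$; and Wolpert's lemma ($l_Y(\gamma)\ge e^{-2D}l_X(\gamma)$ when $d_{\T(S)}(X,Y)\le D$) shows $N_D(H\cdot X)$ is $\epsilon$-cobounded with $\epsilon = e^{-2D}\epsilon'$. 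So the coboundedness you need is elementary and uniform in $h_1,h_2$, with no appeal to limit sets or unique ergodicity. Your heavier citation would also yield a correct proof, but the point worth internalizing is that the hypothesis actually used is only quasiconvexity of the orbit together with the thickness of orbit points, not the full convex cocompactness package.
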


\begin{proof}
Fix $D$, the quasiconvexity constant for $H\cdot X$ in $\mathcal{T}(S)$. Note that $N_D(H\cdot X)$ is $\epsilon$-cobounded for some $\epsilon > 0$. This follows from the facts that $X$ is $\epsilon'$-thick and that, if $Y \in \mathcal{T}(S)$ with $d_{\mathcal{T}(S)}(X,Y) \le D$, then for any simple closed curve $\gamma$, $l_Y(\gamma) \ge e^{-2D} l_X(\gamma) \ge e^{-2D} \epsilon'$.

Let $h, h' \in H$ and suppose $\alpha$ is a geodesic path in $A(\Gamma)$ from $h$ to $h'$. Then $\phi (\alpha)$ is a $K$-quasigeodesic joining $h\cdot X$ and $h'\cdot X$. Let $\tau$ be the Teichmuller geodesic between these points. By quasiconvexity, $\tau \subset N_D(H\cdot X)$ and so $\tau$ is $\epsilon$-cobounded. 

By Theorem \ref{Minsky}, there exists a $B$ depending only on $\epsilon$ and $K$ so that any $K$-quasigeodesic with endpoints on $\tau$ is contained in a $B$-neighborhood of $\tau$. Hence,
$\phi(\alpha) \subset N_B(\tau) \subset N_{D+B}(H\cdot X).$ So if $p \in \alpha \subset A(\Gamma)$, then there is an $h \in H$ with
$$d_{\mathcal{T}(S)}(\phi(p), h\cdot X) \le D+B $$
and so 
\begin{eqnarray*}
d_{A(\Gamma)}(p,h) &\le& K(d_{\mathcal{T}(S)}(\phi(p), h\cdot X)) + K\\
&=& K(D +B) +K
\end{eqnarray*}
This implies that $\alpha \subset N_{K(D+B)+K}(H)$, as required.

\end{proof}

\section{Some examples}\label{sec:examples}
In this section we produce some examples of convex cocompact subgroups of mapping class groups. We encourage the reader to observe the algorithmic nature of these examples, and that in each case quasiconvexity of the subgroup is proven by constructing the compact complex $C$.

\begin{example}
Take $\Gamma$ with $E(\Gamma) = \emptyset$, so that $A(\Gamma) = \mathbb{F}(V(\Gamma))$, the free group on the vertex set of $\Gamma$. In this case, any finitely generated $H \le A(\Gamma)$ is quasiconvex and $C$ is the Stallings core graph for $H$, as explained in \cite{St83}. Briefly, one begins with a rose $R_H$ with rank($H$) petals and subdivides and labels each petal with a generator of $H$ written in terms on the standard generators of $A(\Gamma)$. This produces a map $R_H \to S_{\Gamma} = R_{|V(\Gamma)|}$ and iteratively folding the graph $R_H$ produces a graph $G_H$ and an immersion $G_H \to R_{|V(\Gamma)|}$ with image $H$ at the fundamental group level. This is the desired $C$, since graph immersions are local isometries (when each edge is given unit length). Hence, when $A(\Gamma)$ is free,  for any finitely generated $H < A(\Gamma)$ and any admissible homomorphism $\phi: A(\Gamma) \to \Mod(S)$, $\phi(H)$ is convex cocompact if and only if $\phi(H)$ is purely pseudo-Anosov.
\end{example} 

\begin{example}
Take $\Gamma$ as in Figure $1$ with any admissible realization of $A(\Gamma)$ in Mod$(S)$ fully supported on $X_a,X_b,X_c$. Then $A(\Gamma) = \langle a,b,c| [b,c] = 1 \rangle$ and consider $H = \langle bca, babc \rangle$. We construct the complex $C$ described below. This will be the desired complex once we verify that the induced map $C \to S_{\Gamma}$ is a local isometry, as in Section \ref{cubegeo}.

To construct $C$, we begin as in the example above by building an immersion from a based graph into the $1$-skeleton of $S_{\Gamma}$, see Figure \ref{fig2}. In this case, a single fold is required. This map, however, is far from a local isometry into $S_{\Gamma}$ because of the absence of $2$-cells. We add such $2$-cells to the graph representing the commuting relations between $b$ and $c$, first adding the squares between adjacent occurrences of $b$ and $c$ in the graph. The resulting complex admits an obvious cubical map into $S_{\Gamma}$. This, however, is not a local isometry as the image of the link at the base point is not a full subcomplex of lk$(v, S_{\Gamma})$. To rectify this, attach an additional square representing commuting relations between $b$ and $c$, giving four $2$-cells in total.  Call the resulting complex $C$. We then verify that the induced map on the link of each vertex satisfies the conditions for $C \to S_{\Gamma}$ to be a local isometry. Hence, $H$ is a quasiconvex subgroup of $A(\Gamma)$. From this, we can readily check that each $h \in H$ is filling and so by Theorem \ref{construction}, $H$ is convex cocompact as a subgroup of the mapping class group. 
\end{example}

\begin{figure}[htbp]
\begin{center}
\includegraphics[height=60mm]{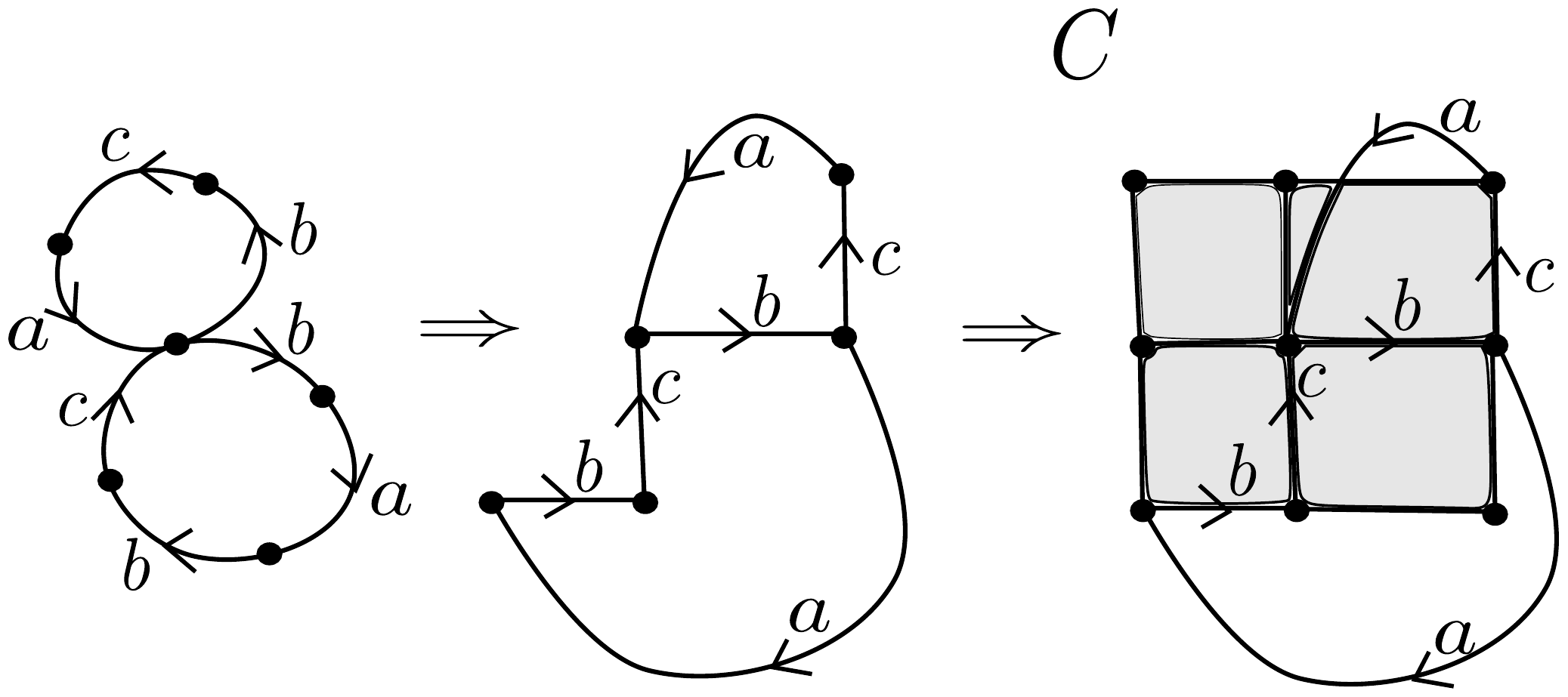}
\caption{Constructing $C$}
\label{fig2}
\end{center}
\end{figure}

\begin{example}
We augment the example above by adding the element $b^2c^2a^2$ to the generating set of $H$. Note that inspection of the complex $C$ reveals that this word is not already in the subgroup $H$. Call the resulting subgroup $H' = \langle abc,cba, a^2b^2c^2 \rangle$.  

To build $C'$, the complex with a local isometry to $S_{\Gamma}$ with fundamental group $H'$, we add to the complex $C$ with a relative version of the construction above. This requires attaching a loop labeled $b^2c^2a^2$ to the base vertex, performing $2$ folds, and adding $5$ squares. Once this is done, it is again easily seen that $C'$ has the required properties. Hence, $H'$ is a convex cocompact subgroup of the mapping class group (through the homomorphism fixed above).
\end{example}

In the above examples, we used the existence of the complex $C$ to verify that our subgroup was quasiconvex with all filling elements. The following example shows how attempting to construct $C$ can also show that a particular group will not be purely pseudo-Anosov. 

\begin{example}
As in the previous example, we add to the subgroup $H$ without changing $A(\Gamma) \le \Mod(S)$. Suppose we wish to add the generator $bca^2$ to obtain the subgroup $H'' = \langle abc,cab,a^2bc \rangle$. Attaching the necessary cells to the complex $C$, we see that some word in $H''$ is conjugate to $a$ and hence not filling. Adding the word $b^{-1}ca^2$, however, will result in a quasiconvex subgroup all of whose elements are filling. Again, this is immediate by building the associated complex.
\end{example}

\section{Short-translation convex cocompact subgroups of $\Mod(S)$}\label{shortsec}

As an application of our main theorem, we construct convex cocompact subgroups of mapping class groups $\Mod(S_g)$ whose orbit maps into the curve complex have Lipschitz constant on the order of $1/g$ , where $g$ denotes the genus of the surface. This is in contrast to other examples of convex cocompact subgroups occurring in the literature, where one does not have such control over the group constructed. This is clear in the first two constructions of convex cocompact subgroups of mapping class groups that are discussed in the introduction. It is also the case that, in the mapping class group of a once-punctured surface, a pseudo-Anosov in the image of the point-pushing map (the map $\pi_1(S_g) {\to} \Mod(\mathring{S}_g)$ discussed in Section \ref{intro}) has minimum possible translation length equal to $1$, independent of genus \cite{Zhang}.

\begin{figure}[htbp]
\begin{center}
\includegraphics[height=80mm]{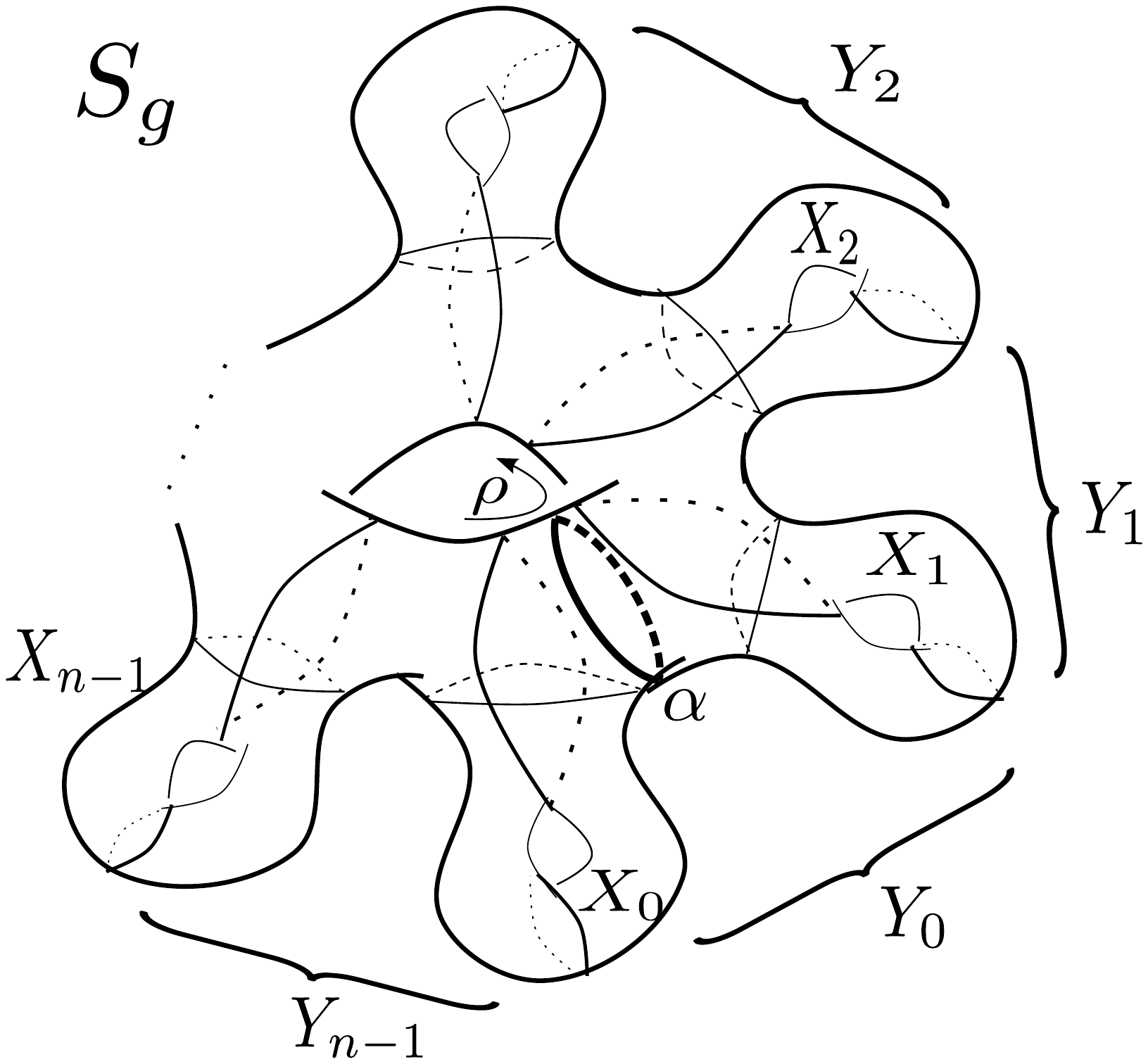}
\caption{$S_g$ with subsurfaces}
\label{fig3}
\end{center}
\end{figure}

Fix $n\ge 2$ and let $S_g$ be a surface of genus $g =n+1$. Referring to Figure \ref{fig3}, let $f_0$ be a mapping class fully supported on $X_0$, let $g_0$ be fully supported on $Y_0$, and let $\rho$ be the order-$n$ counterclockwise rotation shown (see Section \ref{mcgbasics} for the definition of fully supported). Here $X_0$ is a once-punctured torus and $Y_0$ is a four-times punctured sphere. We will require these maps to be pseudo-Anosov with sufficiently large translation length in the curve complex. Define
\[ f_i = \rho^i f_0 \rho^{-i} \qquad \text{ and } \qquad g_i = \rho^i g_0 \rho^{-i}. \]
By construction, $f_i$ is fully supported on $X_i$ and $g_i$ on $Y_i$, where $X_i = \rho^i X_0$ and $Y_i = \rho^i Y_0$.  Note that we have only defined finitely many distinct maps, and may consider the index $i$ as an integer mod $n$.

Observe by disjointness of the subsurfaces involved, we have the following:
\begin{itemize}
\item $f_i$ commutes with $f_j$ for all $1 \le i,j \le n$,
\item $g_i$ commutes with $g_j$ for all $1\le i,j \le n$, and
\item $f_i$ commutes with $g_j$ for $j \neq i-1, i$.
\end{itemize}

Hence, if $\Gamma$ is the coincidence graph for these subsurfaces, $\Gamma^c$ is a cycle of length $2n$, alternating $f_i$ and $g_i$ in cyclic order. If the initial $f_0, g_0$ are chosen with large enough translation length in the curve complexes of $X_0$ and $Y_0$ respectively, where ``large enough'' is independent of $n$, then by Theorem \ref{CLMtheorem} we have an admissible embedding
$$A(\Gamma) \to \Mod(S),$$
and so we identify $A(\Gamma)$ with its image under this homomorphism.

We now demonstrate a few calculations.
Set $\phi = \rho g_0 f_0$ and $\psi = \phi^n$, so
\begin{eqnarray*}
\psi = \phi^n &=& \rho g_0 f_0 \rho g_0 f_0 \ldots \rho g_0 f_0 \\
&=&  \rho g_0 f_0 \rho^{-1} \rho^2 g_0 f_0 \rho^{-2} \ldots \rho^{n} g_0 f_0 \\
&=& g_1f_1g_2f_2 \ldots g_nf_n \\
&=&( g_1g_2 \ldots g_{n-1})(f_1g_n)(f_2f_3 \ldots f_n).
\end{eqnarray*}
Let $w_i = (\rho g_0^if_0^i)^n$, so that $w_1 = \psi$ and $w_i$ replaces each $g_j$ and $f_j$ in $\psi$ with its $i$th power:
\[w_i =  ( g_1^ig_2^i \ldots g_{n-1}^i)(f_1^ig_n^i)(f_2^if_3^i \ldots f_n^i)\]
Let $\alpha$ be the separating curve in $Y_0$ disjoint from both $X_0$ and $X_1$, as shown in Figure \ref{fig3}. 

We can now prove the following theorem, which gives examples of convex cocompact subgroups of the mapping class group whose generators have small translation length in the curve complex.

\begin{theorem}
Let $S$ be as above with genus $g = n+1$ for $n \ge2$, and let $H = \langle w_1, \ldots w_N \rangle < A(\Gamma) < \Mod(S_g)$ as above, for any $N\ge1$. Then $H$ is a convex cocompact subgroup of $\Mod(S)$ and with $\alpha$ as above,
$$d_{\mathcal{C}(S)}(\alpha,w_i^p\alpha) \le 2,$$
for all $i = 1 ,\ldots, N$ and $p \leq \frac{n}{2}$. In particular, $\ell_S(w_i) \le \frac{4}{g-1}.$
\end{theorem}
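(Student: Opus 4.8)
\medskip
The plan is to establish the two assertions in turn: first that $H$ is convex cocompact, then the explicit estimate $d_{\mathcal{C}(S)}(\alpha,w_i^p\alpha)\le 2$ for $p\le n/2$, from which the bound on $\ell_S(w_i)$ follows formally.

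For convex cocompactness I would apply Theorem~\ref{intromain} (equivalently Theorem~\ref{construction}): since $\phi\colon A(\Gamma)\to\Mod(S_g)$ is admissible, it suffices to check that $H=\langle w_1,\dots,w_N\rangle$ is quasiconvex in $A(\Gamma)$ and that every nontrivial element of $H$ is pseudo-Anosov. Each $w_i$ is a cyclically reduced word of syllable length $2n$ that uses every standard generator, and, exactly as in the examples of Section~\ref{sec:examples}, a Stallings-type folding produces a compact cube complex $C$ with a cubical local isometry $(C,x)\to(S_\Gamma,x)$ and $\pi_1(C,x)=H$, which exhibits $H$ as quasiconvex in $A(\Gamma)$ (Theorem~\ref{localiso}). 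For the pseudo-Anosov condition it is enough, by Lemma~\ref{subsfills}, that $\mathbb{X}_h$ fills $S_g$ for a cyclically reduced representative of each nontrivial $h\in H$; since reduction inside $A(\Gamma)$ of a reduced product of the $w_i^{\pm1}$ cancels only boundedly many syllables at each junction, such an $h$ still involves all $2n$ generators, so $\mathbb{X}_h=\{X_0,\dots,X_{n-1},Y_0,\dots,Y_{n-1}\}$, which fills $S_g$ by inspection of Figure~\ref{fig3}. (Alternatively, verify that $H$ is $\ell$-short filling and invoke Theorem~\ref{main} directly.)

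For the displacement estimate I would first record the elementary facts that $\alpha\subset Y_0$ is disjoint from every $X_j$ and from every $Y_j$ with $j\neq 0$, so $f_j(\alpha)=\alpha$ for all $j$ while $g_j(\alpha)=\alpha$ for $j\neq 0$. Writing $w_i=\phi_i^{\,n}$ with $\phi_i=\rho g_0^{\,i}f_0^{\,i}$, one checks by induction --- using that $X_0$ and $Y_0$ are both disjoint from $Y_k$ for $2\le k\le n-1$ --- that $\phi_i^{\,k}(\alpha)=\rho^k g_0^{\,i}(\alpha)$ is supported in $Y_k$, hence disjoint from $\alpha$, for $1\le k\le n-1$. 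The core of the proof is then to track, for $p\le n/2$, which of the necklace subsurfaces $X_j,Y_j$ the curve $w_i^p(\alpha)$ meets essentially. I would show by induction that this is a set of consecutive necklace subsurfaces whose size grows by only a bounded amount with each application of $w_i$: whenever a factor $g_j^{\,i}$ or $f_j^{\,i}$ acts nontrivially, it spreads the current curve across the pseudo-Anosov subsurface supporting it, but because every $Y_j$ carries the distinguished separating curve $\rho^j(\alpha)$ --- and every $X_j$ the analogous curve visible in Figure~\ref{fig3} --- separating the overlaps with its two necklace neighbours, a curve that has only just been pushed into a piece cannot immediately spread past it. Hence for $p\le n/2$ this set omits at least one $Y_j$ with $j\neq 0$; then $\partial Y_j$ is disjoint from $w_i^p(\alpha)$ and, as $j\neq 0$, also from $\alpha$, so $d_{\mathcal{C}(S)}(\alpha,w_i^p\alpha)\le 2$.

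The step I expect to be the main obstacle is this last inductive bookkeeping --- making the ``no immediate propagation'' mechanism precise and controlling the growth rate of the support so that the bound genuinely persists up to $p=\lfloor n/2\rfloor$, which is exactly where the special geometry of Figure~\ref{fig3} enters. Granting the displacement bound, the estimate $\ell_S(w_i)\le 4/(g-1)$ follows formally from the sub-additivity of $q\mapsto d_{\mathcal{C}(S)}(\alpha,w_i^{q}\alpha)$ along powers of $w_i$ together with $d_{\mathcal{C}(S)}(\alpha,w_i^{q}\alpha)\le 2$ for $q\le n/2$ and the definition of the stable translation length.
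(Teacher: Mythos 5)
Your overall strategy for the displacement bound is the right one --- exhibit a proper subsurface containing both $\alpha$ and $h\alpha$ and use that two curves in a common proper subsurface are at distance at most $2$ --- and this matches the paper, which carries out the induction you defer by defining nested subsurfaces $\overline{X}_k$ and $\overline{Y}_k$ and proving that $|h|_H\le k$ forces $h\alpha\subset\overline{X}_k$ or $h\alpha\subset\overline{Y}_k$, with an explicit case analysis of how each of $B$, $M$, $E$ and their inverses enlarges the support. But you have explicitly flagged that induction as the step you cannot yet do, and your warm-up computation already shows why it is delicate: the identity $\phi_i^{\,k}(\alpha)=\rho^k g_0^{\,i}(\alpha)$ fails at the wrap-around, since $Y_{n-1}=Y_{-1}$ meets $X_0$, so $f_0^{\,i}$ does act nontrivially there and the support begins to spread through the $X_j$'s as well. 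So this half of the argument is a correctly identified but genuinely unfilled gap.

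For convex cocompactness your route (Theorem \ref{intromain}: quasiconvexity plus purely pseudo-Anosov) is genuinely different from the paper's, and it has two real gaps. First, quasiconvexity of $H$ in $A(\Gamma)$ is asserted via ``Stallings-type folding,'' but $A(\Gamma)$ is not free: producing a compact complex $C$ with a cubical \emph{local isometry} to $S_\Gamma$ requires adding higher cubes and verifying the link conditions of Theorem \ref{localiso}, and for arbitrary $N$ and $n$ there is no a priori guarantee this completion terminates with $\pi_1(C)=H$ rather than a larger subgroup. The paper deliberately avoids this issue by invoking Corollary \ref{filling}, whose hypothesis is only that filling blocks occur with definite frequency in \emph{every} normal form of every $h\in H$ --- no quasiconvexity is assumed, and it is recovered afterwards from Theorem \ref{necessity}. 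Second, your claim that reduction of a product of the $w_i^{\pm1}$ ``cancels only boundedly many syllables at each junction'' is not justified: in a right-angled Artin group cancellation is not local, since syllables can shuffle past commuting material and cancel at a distance. The paper's Fact $4$ is precisely the verification that the $BME$ concatenations are already normal forms (no cancellation at all), and it uses the specific combinatorics of the necklace (e.g.\ $g_j$ fails to commute with both $f_j$ and $f_{j+1}$, and no syllable can commute past a filling block by Lemma \ref{blockcommutation}); even then, passing from the $BME$ normal form to an arbitrary element of $\Min(h)$ requires the further ordering argument of Lemma \ref{orderafterL}. Without these verifications, ``every nontrivial $h$ involves all $2n$ generators, hence fills'' is an assertion, not a proof.
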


\begin{proof}The statement about translation length follows from Theorem \ref{Lipschitz} below.

Proving convex cocompactness of $H$ require additional notation as well as a few lemmas. Set for $1 \le i \le N$
$$B_i = g_1^i\ldots g_{n-1}^i,$$
$$M_i= f_1^i g_n^i,$$
$$E_i = f_2^i\ldots f_n^i.$$
Observe that $B_i^{-1} = B_{-i}$, $E_i^{-1} = E_{-i}$, but $M_i^{-1} \neq M_{-i}$. In what follows we do not allow negative indices for the $M_i$. Symbols without subscripts will be used when the subscript is not important (e.g. $B=B^{-1}$ but $M \neq M^{-1}$). In any case, the occurrence of such a symbol will aways denote a non-zero subscript (and power). The first lemma gives a procedure for assembling a \emph{particular} normal form for any $h \in H$. 

\begin{lemma}
With notation as above, we have the following facts:
\begin{enumerate}
\item $w_i = B_iM_iE_i$
\item $w_iw_j^{-1} = B_iM_iE_{i-j}M_j^{-1}B_j^{-1}$ \\
	$w_i^{-1}w_j = E_i^{-1}M_i^{-1}B_{j-i}M_jE_j$
\item $B_iM_j^{\pm 1}E_k$ and its inverse are filling words where $j\ge 1$.
\item Suppose that $h$ is a word of $A(\Gamma)$ that is a concatenation of $B$s, $M$s, and $E$s so that adjacent pairs appear as $BM$, $ME$, $EB$ or their inverses ($M^{-1}B$, $EM^{-1}$, or $BE$). Then $h$ is in normal form with respect to the standard generators $f_1,\ldots,f_n,g_1,\ldots, g_n$ of $A(\Gamma)$.
\end{enumerate}
\end{lemma}

\begin{proof} 
Fact $1$ is immediate from the definitions, as is Fact $2$ once it is recalled that all letters used in the spelling of $B$ (and $E$) pairwise commute. Fact $3$ follows since these words have the least number of syllables in their conjugacy classes and the supports of the generators used in their spelling fill $S$. Fact $4$ requires some verification. First observe that any word with at most one occurrence of $B,M$ and $E$ is in normal form, since in this case every generator occurs in a syllable at most once. Suppose toward contradiction that $h$ as in Fact $4$ is not in normal form. Then there is a syllable that can be shuffled to the right using move $(3)$ and combined with another syllable elsewhere in the word. This syllable must commute with every letter in between. By Fact $3$ and the fact that each generator occurs only within a $B,M$ or $E$ such shuffling can only occur between consecutive occurrence of the same symbol $B,M$ or $E$;  otherwise it would have to commute with a filling word, an impossibility. Hence, the remaining cases are when the syllables that are to be combined occur between consecutive $B,M$ or $E$ expressions. For the $B$ case, such a syllable is a power of $g_j$ and must be commuted passed an entire expression $E$ to combine with another power of $g_j$. This, however, is a contradiction since $g_j$ fails to commute with both $f_j$ and $f_{j+1}$, one of which occurs in $E$. The case where the syllable to be combined is contained in $E$ is similar. The final case is when the syllable is contained in $M$ or $M^{-1}$ and is therefore either a power of $f_1$ or $g_n$. However, in $M$, $f_1$ cannot be commuted past $g_n$, and $g_n$ cannot be commuted past $f_n$ which occurs in the expression $E$. In either case, neither syllable can be shuffled right to combine with another syllable. In $M^{-1}$, $g_n^{-1}$ cannot commute past $f_1^{-1}$, and $f_1^{-1}$ cannot commute past $g_1$ which occurs in the expression $B$, so we have a similar contradiction. This completes the proof that no syllable of $h$ can be combined with another syllable and shows that $h$ is in normal form.
\end{proof}

By Corollary \ref{filling}, to prove that $H$ is a convex cocompact subgroup of $\Mod(S_g)$ it suffices to show that there is an $\ell \ge 0$ so that for $h \in H$, any $w \in \mathrm{Min}(h)$ has the property that any subword of $w$ of $A(\Gamma)$-length greater than $\ell$ contains a filling block.

First suppose that $h \in H$ is written in terms of the generators $w_1, \ldots, w_N$ with no consecutive occurrences of a generator and its inverse (i.e. $h$ is written as a reduced word in the free group generated by the symbols $w_1, \ldots, w_N$). Now use Facts $1$ and $2$ above to transform $h$ into it its $BME$ spelling. We note that since $h$ is reduced in the free group generated by the $w_i$, within the $BME$ spelling no subscript is zero (see Fact $2$) and so expressions $E$ and $B$ are always separated by a $M$ or $M^{-1}$. To verify that this rewriting of $h$ is in normal form, we use Fact $4$ above. This is immediate from Fact $2$ and the fact that no zero subscript occurs in the rewriting. Denote the normal form representative of $h$ so obtained by $w^*$ and refer to it as the \emph{$BME$ normal form}. The next lemma shows that this normal form has the desired property.

\begin{lemma}\label{existsb}
For any $h\in H$, its $BME$ normal form $w^*$ has the property that every subword of $w^*$ with $A(\Gamma)$-length at least $b  = 3Nn +4N$ has a filling block.
\end{lemma}

\begin{proof}
Assume that $r$ is a subword of $w^*$ of $A(\Gamma)$-length at least $3Nn+4N$. In this spelling, no more than the first $Nn$ letters occur in $r$ before the first full occurrence of $B, M$ or $E$. Whichever symbol occurs first, we see that there is a terminal subword of $r$ of length at least $2Nn+4N$ that begins with either $B,M$ or $E$. Checking the possible cases, we see that within $r$ we have a subword $w'$ which is a consecutive occurrence of $BME$ up to cyclic permutations and inverses.  By Fact $3$ of the previous lemma, $w'$ is a filling block. Thus, $r$ contains a filling block as required.
\end{proof}

Our final lemma needed for the proof gives a strong ordering property for the $BME$ normal forms. Note that our choice of $A(\Gamma) \le \Mod(S_g)$ is such that the support of every generator is required to fill the surface $S_g$.

\begin{lemma} \label{orderafterL}
There is an $L \ge0$ so that if  $h\in H$ has $BME$ normal form $w^*$ and $s$ is a syllable of $w^*$, then any syllable separated from $s$ in $w^*$ by $L$ or more syllables is ordered with $s$. Hence, no more than $2L$ syllables in $h$ are unordered with $s \in \syl(h)$ in any normal form for $h$.
\end{lemma}

\begin{proof}
Let $\Gamma^c$ denote the complement graph of $\Gamma$. As previously observed, $\Gamma^c$ is connected; let $d$ denote its diameter. Take $L = d \cdot b$.

Now let $t$ be a syllable of $w^*$ so that at least $L$ syllables occur between $s$ and $t$ in $w^*$. Let $v$ and $u$ be standard generators of $A(\Gamma)$ so that $s$ is a power of $v$ and $t$ is a power of $u$. Then, for some $k \le d$, there is a path $v =v_0,v_1,\ldots,v_k =u$ in $\Gamma^c$; by construction, $v_i$ and $v_{i+1}$ do not commute. Since $k \cdot b \le d\cdot b$, we may select $w_1, w_2, \ldots, w_{k-1}$ a sequence of disjoint filling blocks between $s$ and $t$ occurring in the order as read from $s$ to $t$. Being filling blocks, each $w_i$ must include every generator, so for $1\le i< k$ we choose $s_i$ to be a syllable in $w_i$ that is a power of $v_i$. Then by construction
$$s \prec s_1 \prec \ldots \prec s_{k-1} \prec t. $$
Hence, by transitivity $s \prec t$.  For the conclusion on the lemma, recall that the number of syllables unordered with a given $s$ in $\syl(h)$ is independent of choice of normal form.
\end{proof}

We can now conclude the proof of the theorem. Set $\ell' = b  + 4LN + 1$ and let $w$ be any normal form for $h\in H$. Towards contradiction, suppose that there is a subword $r$ of $w$ of length at least $\ell'$ that does not contain an occurrence of some generator, call this generator $v$. Enlarge this subword, continuing to denote it $r$, so that either the first or last syllable of $r$ is a power of $v$ and the other terminal syllable is either a terminal syllable of $w$ or another syllable that is a power of $v$. This is possible since $w$ itself is filling and so must contain at least one occurrence of a power of $v$. 

Let $w^*$ be the $BME$ normal form for $h$; note that, by Lemma \ref{existsb}, there is no subword of $w^*$ longer than $b$ that fails to contain each generator of $A(\Gamma)$. Since $w,w^* \in \mathrm{Min}(h)$, these words are related by a repeated application of the type $(3)$ moves described in Section \ref{normalforms}. There are two cases, depending on whether $r$ contains one or two occurrences of $v$. If the first and last syllables of $r$ are powers of $v$ then denote these by $s_1$ and $s_2$. Since between these syllables in $w$ there is no occurrence of $v$, the type $(3)$ moves from $w$ to $w^*$ must commute $\ell' -b$ letters past either $s_1$ or $s_2$. Hence, at least $(\ell' -b)/2$ letters must be commuted past, say, $s_1$.  Since no syllable contains more than $N$ letters, at least $(\ell' -b)/(2N)$ syllables are commuted past $s_1$ in the process of applying type $(3)$ moves. All of these syllables must be unordered with $s_1$. However, by Lemma \ref{orderafterL} the number of syllables unordered with $s_1$ is bounded by $2L$. Hence,
$$\frac{\ell' -b}{2N} \le 2L, $$
contradicting our choice of $\ell'$. The case where $r$ contains only one occurrence of the generator $v$ and, hence, contains the first or last syllable of $w$ is similar and left to the reader. 

Let $\ell = \ell' + 2N$.  Given $w'$ a subword of $w$ of length at least $\ell$, let $r$ be the subword consisting of the middle $\ell'$ letters of $w'$.  We have seen that $r$ contains an occurrence of each generator.  By Lemma \ref{subsfills}, $r$ is a filling block for $w$ unless one or both of its terminal syllables are not complete syllables in $w$.  Appending up to $N$ letters to the left and the right of $r$ gives filling block for $w$ contained in $w'$.
\end{proof}

Let $S_g$ be as above with genus $g = n+1$ for $n \ge2$, and let $H = \langle w_1, \ldots w_N \rangle \le A(\Gamma) \le \Mod(S_g)$ as above, for any $N\ge1$.  For any element $h \in H$, let $|h|_H$ denote its word length with respect to the generating set $\{w_1, \ldots w_N\}$.  Let $\alpha$ be the curve in Figure \ref{fig3}.

\begin{theorem}\label{Lipschitz}For any $h \in H$,
\[d_{\mathcal{C}(S)}(\alpha,h\alpha) \leq |h|_H\cdot\frac{4}{g-1} + 2\]\end{theorem}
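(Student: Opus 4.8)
The plan is to reduce the inequality to a statement about bounded-length products of the generators, using that $\Mod(S)$ acts on $\mathcal{C}(S)$ by isometries, and then to prove that statement by following the support of the curve $\alpha$ as it is pushed around the cyclically arranged subsurfaces $X_j,Y_j$.

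\emph{Reduction.} Write $h=w_{i_1}^{\epsilon_1}\cdots w_{i_k}^{\epsilon_k}$ with $k=|h|_H$. For any splitting $h=uv$ one has $d_{\mathcal{C}(S)}(u\alpha,h\alpha)=d_{\mathcal{C}(S)}(\alpha,v\alpha)$, so grouping the $k$ letters into $\lceil k/L\rceil$ consecutive blocks of at most $L$ letters, the triangle inequality gives $d_{\mathcal{C}(S)}(\alpha,h\alpha)\le 2\lceil k/L\rceil$ as soon as one knows
\[(\ast)\qquad d_{\mathcal{C}(S)}(\alpha,g\alpha)\le 2 \quad\text{for every }g\text{ that is a product of at most }L\text{ generators }w_i^{\pm1}.\]
If $L$ can be taken with $L\ge (g-1)/2$, then $2\lceil k/L\rceil\le 2(k/L+1)\le \tfrac{4}{g-1}|h|_H+2$, which is exactly the claim. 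So everything comes down to establishing $(\ast)$ with $L=\lceil (g-1)/2\rceil$.

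\emph{Proof of $(\ast)$.} Set $n=g-1$. Since $\Gamma^c$ is a $2n$-cycle, list the subsurfaces cyclically as $W_0,\dots,W_{2n-1}$, alternating the one-holed tori $X_j$ and the four-holed spheres $Y_j$, so that $W_a,W_b$ intersect essentially exactly when $a\equiv b\pm 1$. Arrange the indexing so $W_0=Y_0$; then, because $\alpha\subset Y_0$ is disjoint from $X_0$ and $X_1$ and hence from every subsurface except $Y_0$, the curve $\alpha$ is supported on the single arc $\{W_0\}$ (here ``supported on an arc $A$'' means contained up to isotopy in $\bigcup_{i\in A}W_i$). Crucially, every generator has the form $w_i=(\rho g_0^i f_0^i)^n$, where $\rho$ acts on this labelling as the rotation by $2$ and $g_0^i,f_0^i$ are supported on the \emph{same} fixed pair $Z_0=X_0\cup Y_0=W_{-1}\cup W_0$; so a product of generators is a product of factors of the form $\rho\cdot(\text{map supported on }Z_0)$, and the analysis below is uniform in the $i$'s. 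The key step is a support lemma: if $\delta$ is supported on a cyclic arc $A$ and $w$ is one of the $w_i^{\pm1}$, then $w\delta$ is supported on an arc obtained from $A$ by adjoining a bounded number of subsurfaces at each end, and $w\alpha$ is supported on an arc of length at most $3$. One proves this by applying the $n$ factors of $w$ one at a time: each factor first alters $\delta$ only inside $Z_0$ — which can enlarge the support only when the current arc already meets $Z_0$, and then by a controlled amount — and then rotates by $2$, which does not change the arc's length; over the $n$ factors the arc makes one full turn, so each end sweeps past $Z_0$ a bounded number of times, giving a per-generator enlargement bounded by an absolute constant (one checks this constant is $4$ in general, and is only $2$ for the first generator applied to $\alpha$, since $\alpha$ avoids both neighbours $X_0,X_1$ of $Y_0$). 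Iterating, a product of at most $L$ generators applied to $\alpha$ is supported on an arc of length less than $2n$, which in particular omits at least one of the one-holed tori. Hence $Z:=\bigcup_{i\in A}W_i$ is an essential subsurface with $\mathrm{genus}(Z)<\mathrm{genus}(S)$, so $Z$ does not fill $S$, and any essential curve $\delta$ disjoint from $Z$ has $d_{\mathcal{C}(S)}(\alpha,\delta)\le 1$ (as $\alpha\subset Y_0\subseteq Z$) and $d_{\mathcal{C}(S)}(g\alpha,\delta)\le 1$ (as $g\alpha\subseteq Z$), whence $d_{\mathcal{C}(S)}(\alpha,g\alpha)\le 2$, which is $(\ast)$.

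\emph{Main obstacle.} The delicate part is the support lemma. One has to be careful about ``contained in'' versus ``essentially meets'' a subsurface, about the order in which $g_0^i$ and $f_0^i$ act relative to the rotation $\rho$, and about whether and how the per-generator enlargement depends on the exponents $i$ (equivalently on the translation lengths of $g_0,f_0$) — the whole point being to keep the growth small enough that the block length $L$ may still be taken at least $(g-1)/2$. One should also check carefully that the arc-length bound after $L$ generators really yields a proper subsurface of strictly smaller genus (so that the desired curve $\delta$ exists), treating the small-genus cases and the parity of $n$ separately if necessary.
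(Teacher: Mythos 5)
Your overall route is the same as the paper's: reduce via the triangle inequality to showing that a product of roughly $(g-1)/2$ generators moves $\alpha$ a distance at most $2$, and prove that by showing $\alpha$ and its image lie in a common proper subsurface (so both are disjoint from its boundary). The paper's Lemma 8.6 is exactly your statement $(\ast)$, and its ``obvious principle'' about a common proper subsurface is your final step. So the architecture is correct and matches.

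The one place you diverge is precisely the part you flag as the main obstacle, the support lemma, and here the paper's device is worth noting because it removes the difficulty you are worried about. Rather than applying the $n$ factors $\rho g_0^i f_0^i$ one at a time and tracking a rotating arc, the paper rewrites each generator in the normal form $w_i=B_iM_iE_i$ (and $w_i^{-1}=E_i^{-1}M_i^{-1}B_i^{-1}$), where $B_i=g_1^i\cdots g_{n-1}^i$ is supported entirely in $\bigcup Y_j$, $E_i=f_2^i\cdots f_n^i$ entirely in $\bigcup X_j$, and $M_i=f_1^ig_n^i$ in a fixed pair of subsurfaces. Applied to a curve supported on an interval of the cycle, $E$ can only adjoin one $X$ at each end (the next subsurfaces out are $Y$'s, which $E$ does not touch), then $M$ adjoins nothing, then $B$ adjoins one $Y$ at each end; this yields a clean induction with explicit subsurfaces $\overline{X}_k,\overline{Y}_k$ growing by exactly four pieces per generator. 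In your ``one rotation factor at a time'' framing, the factors $g_1f_1,g_2f_2,\dots$ are applied in the same cyclic order in which the support interval is being rotated, so you must rule out a cascade in which each successive factor lands just beyond the current end of the interval and extends it again; your assertion that ``each end sweeps past $Z_0$ a bounded number of times'' is plausible but is exactly the point that needs proof, and the $BME$ regrouping is the cheapest way to discharge it. Your other hedges (the parity of $n$, and whether an interval omitting one subsurface spans a proper subsurface) are handled in the paper by the observation that $\Fill$ of any proper subcollection of the $X_i,Y_j$ is a proper subsurface, together with the restriction $k\le n/2$ in the induction. None of this is a wrong turn on your part, but as written the crux of the argument is asserted rather than proved.
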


\begin{proof} It suffices to prove the following lemma:
\begin{lemma}\label{d2}If $|h|_H \leq n/2$ then $d_{\mathcal{C}(S)}(\alpha,h\alpha)\leq 2$.\end{lemma}

This is because, where $m$ is the largest integer less than $|h|_H \cdot (2/n)+1$, we may write $h = h_1\cdots h_m$ for some $h_i$ such that $|h_i|_H \leq \frac{n}{2}$. Using the triangle inequality, we have
\begin{eqnarray*}
d_{\mathcal{C}(S)}(\alpha,h\alpha) &\leq& d_{\mathcal{C}(S)}(\alpha,h_1\alpha) + d_{\mathcal{C}(S)}(h_1\alpha,h_1h_2\alpha) + \dots + d_{\mathcal{C}(S)}(h_1h_2\cdots h_{m-1}\alpha,h\alpha)\\
&=& \sum_{i=1}^m d_{\mathcal{C}(S)}(\alpha,h_i\alpha) \leq 2m \leq |h|_H \cdot (4/n)+2.
\end{eqnarray*}
To obtain the theorem, recall $g=n+1$.

So we are tasked with proving the lemma above.  We rely on an obvious principle: if $S$ is connected has a proper subsurface $S'$ containing the two curves $\alpha$ and $\beta$, then $d_{\mathcal{C}(S)}(\alpha,\beta) \leq 2$, because $\alpha$ and $\beta$ are each disjoint from the curves of $\del S'$.

For a list of subsurfaces $Z_1,\dots,Z_k$ of $S$, we will write $\Fill\{Z_1,\dots,Z_k\}$ for the essential subsurface of $S$ of minimal complexity that contains each of $Z_1,\dots,Z_k$ (up to isotopy). In other words, $\Fill\{Z_1,\dots,Z_k\}$ is the subsurface $Z$ of $S$ that contains each subsurface in the collection and has the property that any curve that projects nontrivially to $Z$ also has nontrivial projection to some $Z_i$.  If each $Z_i$ in the collection equals either some $X_{s}$ or $Y_{t}$, observe that $\Fill\{Z_1,\dots,Z_k\}$ is a proper subsurface of $S$ unless \emph{all} of the subsurfaces $X_1, \ldots, X_n$ and $Y_1, \ldots, Y_n$ occur in the collection. Hence, the set of all $X_{s}$ and $Y_{t}$ fill $S$, but no proper subset of this collection of subsurfaces fills $S$.

\begin{figure}[htbp]
\begin{center}
\includegraphics[height=70mm]{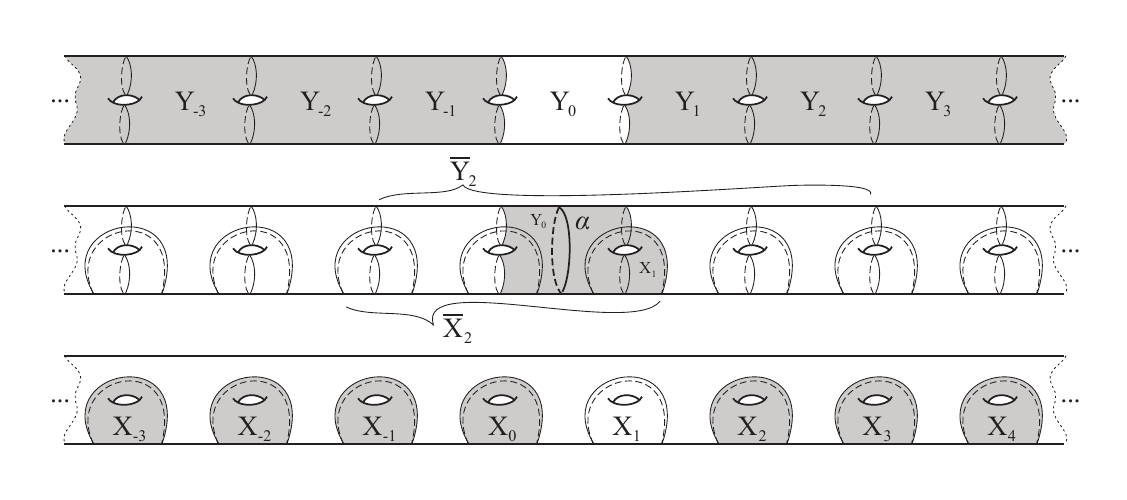}
\caption{Three views of $S_g$.  The supports of $B$, $M$, and $E$ are shaded in the top, middle, and bottom copies respectively.}
\label{BME}
\end{center}
\end{figure}

Below, we write $w$ to represent any $w_i$ and $v$ to represent any $w_i^{-1}$.  That is, any appearance of $w$ or $v$ satisfies $w=BME$ or $v=EM^{-1}B$ for some suitabe $B,M,$ and $E$.  It turns out $H$ is a free group, although this fact is not necessary in order to obtain upper bounds on displacement.  We think of a given $h$ as a product of generators of type $w$ or $v$.  It is tedious but straightforward to verify by inspection the following statements:
\begin{itemize}
\item $\alpha, v\alpha \subset \Fill\{X_0,Y_0\}$ which is a proper subsurface of $S$ (written ``$\subsetneq S$'') if $n\geq 2$
\item $\alpha, w\alpha \subset \Fill\{Y_0,X_1,Y_1\}\subsetneq S$ if $n\geq 2$
\item $\alpha, wv\alpha \subset \Fill\{Y_1,X_0,Y_0,X_1,Y_1\}\subsetneq S$ if $n\geq 3$
\item $\alpha, vw\alpha \subset \Fill\{X_0,Y_0,X_1,Y_1,X_2\}\subsetneq S$ if $n\geq 3$
\item $\alpha, w^2\alpha \subset \Fill\{Y_{-1},X_0,Y_0,X_1,Y_1,X_2,Y_2\}\subsetneq S$ if $n\geq 4$
\item $\alpha, v^2\alpha \subset \Fill\{X_{-1},Y_{-1},X_0,Y_0,X_1\}\subsetneq S$ if $n\geq 5$
\end{itemize}
These statements are sufficient to verify Lemma \ref{d2} if $n \leq 5$.  They also set forth enough initial cases to complete the proof of Lemma \ref{d2} via induction.

For $k \geq 2$, define the subsurfaces:
\begin{eqnarray*}
\overline{X}_k &=& \Fill\{X_i\thinspace,\thinspace Y_j:-k<i<k \text{ and } -k<j<k-1\}\\
\overline{Y}_k &=& \Fill\{Y_i\thinspace,\thinspace X_j:-k<i\leq k \text{ and } -k+1<j\leq k\}
\end{eqnarray*}
Using our comment above about how many subsurfaces $X_i$ and $Y_j$ are required to fill $S$, one can check that, for $k \leq n/2$, $\overline{X}_k$ and $\overline{Y}_k$ are proper subsurfaces of $S$.  Thus we are done if we can establish the claim:
\begin{itemize}
\item[($\star$)] If $|h|\leq k$ then $h\alpha$ is contained in either $\overline{X}_k$ or $\overline{Y}_k$. 
\end{itemize}
The claim for $k=2$ is an outcome of the definitions and the bulleted statements above.  So it remains to assume the truth of ($\star$) for $k\geq 2$, and prove ($\star$) for $k+1$.

Given $h$ such that $|h|=k+1$, either $h = wh'$ or $h = vh'$ where $|h'| = k$ satisfies ($\star$).  We also know $h'\alpha$ is contained in either $\overline{X}_k$ or $\overline{Y}_k$.  We analyze all possibilities below.  Recall that $w=BME$ and $v=EM^{-1}B$ where $B$ is supported entirely on $\cup Y_i$ and $E$ is supported entirely on $\cup X_i$.  Since $k \geq 2$ we are also assured that $M$ is supported within $\overline{X}_k$ as well as within $\overline{Y}_k$.  For the next set of statements---apparent by inspection---observe that, depending on the case, applying $B$ or $E$ may increase the support required to contain the image curve, while $M$ has no effect.  See Figure \ref{BME}.

If $h'\alpha \subset \overline{X}_k = \Fill\{X_{-k+1},Y_{-k+1},\cdots,Y_{k-2},X_{k-1}\}$,
\begin{eqnarray*}
Bh'\alpha &\subset& \Fill\{Y_{-k},X_{-k+1},Y_{-k+1},\cdots,Y_{k-2},X_{k-1},Y_{k-1}\}\\
M^{-1}Bh'\alpha &\subset& \Fill\{Y_{-k},X_{-k+1},Y_{-k+1},\cdots,Y_{k-2},X_{k-1},Y_{k-1}\}\\
vh'=EM^{-1}Bh'\alpha &\subset& \Fill\{X_{-k},Y_{-k},X_{-k+1},Y_{-k+1},\cdots,Y_{k-2},X_{k-1},Y_{k-1},X_k\}=\overline{X}_{k+1}\\
Eh'\alpha &\subset& \overline{X}_k\\
MEh'\alpha &\subset& \overline{X}_k\\
wh'=BMEh'\alpha &\subset& \Fill\{Y_{-k},X_{-k+1},Y_{-k+1},\cdots,Y_{k-2},X_{k-1},Y_{k-1}\} \subset \overline{X}_{k+1}.
\end{eqnarray*}

If $h'\alpha \subset \overline{Y}_k = \Fill\{Y_{-k+1},X_{-k+2},\cdots,X_k,Y_k\}$,
\begin{eqnarray*}
Eh'\alpha &\subset& \Fill\{X_{-k+1},Y_{-k+1},X_{-k+2},\cdots,X_k,Y_k,X_{k+1}\}\\
MEh'\alpha &\subset& \Fill\{X_{-k+1},Y_{-k+1},X_{-k+2},\cdots,X_k,Y_k,X_{k+1}\}\\
wh'=BMEh'\alpha &\subset& \Fill\{Y_{-k},X_{-k+1},Y_{-k+1},X_{-k+2},\cdots,X_k,Y_k,X_{k+1},Y_{k+1}\} = \overline{Y}_{k+1}\\
Bh'\alpha &\subset& \overline{Y}_k\\
M^{-1}Bh'\alpha &\subset& \overline{Y}_k\\
vh'=EM^{-1}Bh'\alpha &\subset& \Fill\{X_{-k+1},Y_{-k+1},X_{-k+2},\cdots,X_k,Y_k,X_{k+1}\} \subset \overline{Y}_{k+1}.\\
\end{eqnarray*}

\end{proof}

\bibliography{biblio}
\bibliographystyle{amsalpha}
\end{document}